\newcommand{\Z}{\mathbb{Z}}
\newcommand{\R}{\mathbb{R}}
\newcommand{\RP}{\mathbb{RP}}
\newcommand{\X}{\mathbb{X}}
\newcommand{\E}{\mathbb{E}}
\newcommand{\C}{\mathbb{C}}
\newcommand{\bS}{\mathbb{S}}
\newcommand{\Lob}{\Lambda}
\newcommand{\bell}{\boldsymbol{\ell}}
\newcommand{\ba}{\mathbf{a}}
\newcommand{\bb}{\mathbf{b}}
\newcommand{\bc}{\mathbf{c}}
\newcommand{\br}{\mathbf{r}}
\newcommand{\bn}{\mathbf{n}}
\newcommand{\bm}{\mathbf{m}}
\newcommand{\bu}{\mathbf{u}}
\newcommand{\bo}{\mathbf{o}}
\newcommand{\bv}{\mathbf{v}}
\newcommand{\bw}{\mathbf{w}}
\newcommand{\bp}{\mathbf{p}}
\newcommand{\Vor}{V_{\mathrm{or}}}
\newcommand{\dn}{\mathop{\mathrm{dn}}\nolimits}
\newcommand{\cn}{\mathop{\mathrm{cn}}\nolimits}
\newcommand{\sn}{\mathop{\mathrm{sn}}\nolimits}
\newcommand{\dist}{\mathop{\mathrm{dist}}\nolimits}
\newcommand{\sgn}{\mathop{\mathrm{sgn}}\nolimits}
\newcommand{\area}{\mathop{\mathrm{area}}\nolimits}
\newcommand{\volume}{\mathop{\mathrm{vol}}\nolimits}
\newcommand{\CC}{\mathcal{C}}
\newcommand{\CV}{\mathcal{V}}
\newtheorem{theorem}{Theorem} [section]
\newtheorem{propos}[theorem] {Proposition}
\newtheorem{cor}[theorem] {Corollary}
\newtheorem{conj}[theorem] {Conjecture}
\newtheorem{question}[theorem] {Question}
\theoremstyle{definition}
\newtheorem{remark}[theorem]{Remark}
\newtheorem{defin}[theorem]{Definition}
\numberwithin{equation}{section}
\author{Alexander A. Gaifullin}
\address{Steklov Mathematical Institute of Russian Academy of Sciences, Moscow, Russia}
\address{Lomonosov Moscow State University, Moscow, Russia}
\email{agaif@mi-ras.ru}
\thanks{}
\title{Exotic spherical flexible octahedra and counterexamples to the Modified Bellows Conjecture}
\date{}
\keywords{Flexible polyhedron, Bricard octahedra, Bellows Conjecture, Sabitov theorem}
\subjclass[2020]{52C25 (Primary), 51M25, 52B10 (Secondary)}
\begin{document}
\begin{abstract}
In 2014 the author showed that in the three-dimensional spherical space, alongside with three classical types of flexible octahedra constructed by Bricard, there exists a new type of flexible octahedra, which was called exotic. In the present paper we give a geometric construction for exotic flexible octahedra, describe their configuration spaces, and calculate their volumes. We show that the volume of an exotic flexible  octahedron is nonconstant during the flexion, and moreover the volume remains nonconstant if we replace any set of vertices of the octahedron with their antipodes. So exotic flexible octahedra are counterexamples to the Modified Bellows Conjecture proposed by the author in 2015.
\end{abstract}

\maketitle

\section{Introduction}

\subsection{Flexible octahedra}

Let $\X^3$ be one of the three three-dimensional spaces of constant curvature, i.\,e., the Euclidean space~$\E^3$, or the Lobachevsky space~$\Lob^3$, or the sphere~$\bS^3$. By \textit{octahedron} we always mean a (possibly self-intersecting) polyhedral surface in~$\X^3$ that has the combinatorial type of a regular octahedron. This means that an octahedron has $6$ vertices $\ba_1$, $\ba_2$, $\ba_3$, $\bb_1$, $\bb_2$, and~$\bb_3$, $12$ edges spanned by all pairs of vertices except for the three pairs~$\{\ba_i,\bb_i\}$, and $8$ triangular faces spanned by all triples of vertices that are pairwise connected by edges. We always assume that all faces are non-degenerate, that is, the vertices of every face do not lie on a line. Suppose that the vertices of an octahedron move continuously in~$\X^3$ so that every face of the octahedron remains congruent to itself. The obtained continuous family of octahedra~$P(u)$ is called a \textit{flex} unless it is induced by a continuous family of isometries of the whole space~$\X^3$.

To avoid certain degenerate situations, we need the following concept of an essential octahedron. An octahedron is said to be \textit{essential} if every dihedral angle of it is neither zero nor straight. A flexible octahedron~$P(u)$ is said to be \textit{essential} if the octahedron~$P(u)$ is essential for all but a finite number values of~$u$.

Bricard~\cite{Bri97} (cf.~\cite{Ben12}) classified (essential) flexible octahedra in Euclidean $3$-space~$\E^3$. He showed that there are three types of them:
\smallskip

\textsl{Type 1: line-symmetric flexible octahedra}. For every $i=1,2,3$, the combinatorially opposite vertices~$\ba_i$ and~$\bb_i$ are symmetric to each other about a line~$L$ (not depending on~$i$).
\smallskip

\textsl{Type 2: plane-symmetric flexible octahedra}. For two values of the index~$i$ (say, for $i=1,2$) the combinatorially opposite vertices~$\ba_i$  and~$\bb_i$ are symmetric to each other about a plane~$\Pi$ (not depending on~$i$), and the remaining two vertices~$\ba_3$ and~$\bb_3$ lie on~$\Pi$.
\smallskip

\textsl{Type 3: skew flexible octahedra}. These flexible octahedra are unsymmetric. The most important for us property of them is that the tangents of the halves of all their dihedral angles remain either directly or inversely proportional to each other during the flexion. This, in particular, implies that when some dihedral angle is either zero or straight, then all other dihedral angles are either zero or straight, too. Hence, a skew flexible octahedron has two flat positions.
\smallskip

Bricard's classification relies upon the study of algebraic relations between the variables $t_i=\tan(\varphi_i/2)$, where $\varphi_1$, $\varphi_2$, and~$\varphi_3$ are the oriented dihedral angles (defined analogously to Section~\ref{section_Bricard}) of an octahedron at the edges~$\ba_2\ba_3$, $\ba_3\ba_1$, and~$\ba_1\ba_2$, respectively. Bricard proved that each pair of the variables~$t_1$, $t_2$, and~$t_3$ satisfies a biquadratic relation of the form
\begin{equation}\label{eq_rel}
 A_{ij}t_i^2t_j^2+B_{ij}t_i^2+2C_{ij}t_it_j+D_{ij}t_j^2+E_{ij}=0.
\end{equation}
Then he analyzed all possible cases regarding the reducibility of these relations. One obtains a flexible octahedron if and only if the algebraic variety in~$\RP^1\times\RP^1\times\RP^1$ given by the three equations~\eqref{eq_rel} with $(i,j)=(1,2)$, $(2,3)$, and~$(3,1)$ contains a one-dimensional irreducible component~$\Gamma$. We denote by~$\R(\Gamma)$ the field of rational functions on~$\Gamma$, and by~$\R(t_i)$ (respectively,~$\R(t_i,t_j)$) the subfield of~$\R(\Gamma)$ consisting of rational functions in~$t_i$ (respectively, in~$t_i$ and~$t_j$). For a subfield~$F$ of a field~$E$, we denote by~$|E/F|$ the degree of~$E$ over~$F$.  In modern language, Bricard's classification can be formulated as follows.
\smallskip

(1) A flexible octahedron is line-symmetric if and only if any pair of variables~$t_i$ and~$t_j$ is neither directly nor inversely proportional to each other on~$\Gamma$. This can happen in two subcases:

(1a) $\R(\Gamma)=\R(t_1,t_2)=\R(t_2,t_3)=\R(t_1,t_3)$ and $|\R(\Gamma)/\R(t_i)|=2$ for all~$i$.

(1b) After a permutation of indices~$1$, $2$, and~$3$, we have that $\R(\Gamma)=\R(t_1)=\R(t_2,t_3)$ and $|\R(\Gamma)/\R(t_2)|=|\R(\Gamma)/\R(t_3)|=2$.
\smallskip

(2) A flexible octahedron is plane-symmetric if and only if two variables, say~$t_1$ and~$t_2$, are either directly or inversely proportional to each other on~$\Gamma$, and $t_3$ is neither directly nor inversely proportional to them. Then $\R(t_1)=\R(t_2)$, $\R(\Gamma)=\R(t_1,t_3)=\R(t_2,t_3)$. This can happen in three subcases:

(2a) $|\R(\Gamma)/\R(t_1)|=|\R(\Gamma)/\R(t_3)|=2$.

(2b) $\R(\Gamma)=\R(t_1)$ and $|\R(\Gamma)/\R(t_3)|=2$.

(2c) $\R(\Gamma)=\R(t_3)$ and $|\R(\Gamma)/\R(t_1)|=2$.
\smallskip

(3) A flexible octahedron is skew if and only if every pair of variables~$t_i$ and~$t_j$ are either directly or inversely proportional to each other on~$\Gamma$. Then $\R(\Gamma)=\R(t_i)$ for all~$i$.
\smallskip

Moreover, the listed cases~(1a), (1b), (2a), (2b), (2c), and~(3) exhaust all possibilities for essential  flexible octahedra in~$\E^3$. Note also that the curve~$\Gamma$ is elliptic in cases~(1a) and~(2a) and rational in cases~(1b), (2b), (2c), and~(3).

Stachel~\cite{Sta06} showed that all the three types of Bricard's flexible octahedra exist in the Lobachevsky space~$\Lambda^3$ as well. Moreover, his arguments immediately carry over to the octahedra in~$\bS^3$.

If a relation of the form~\eqref{eq_rel} is irreducible, then it can be interpreted as the addition law for Jacobi's elliptic functions. This observation is very old and natural in theory of elliptic functions. In theory of flexible polyhedra it was first used by Izmestiev~\cite{Izm15,Izm17} to study flexible quadrilaterals and flexible Kokotsakis polyhedra. The author~\cite{Gai14c} has used the same approach to obtain a complete classification of flexible cross-polytopes (which are high-dimensional analogues of octahedra) in all spaces~$\E^n$, $\Lob^n$, and~$\bS^n$, and write explicit parametrizations for their flexions.

In the case of~$\E^3$ the results of~\cite{Gai14c} recover Bricard's classification from the point of view of elliptic functions. In particlular, in the most complicated case~(1a), the following parametrization of~$\Gamma$ was obtained:
\begin{equation}\label{eq_dn_param}
t_i = \lambda_i\dn(u-\sigma_i,k), \qquad i=1,2,3,
\end{equation}
where $k$ is an elliptic modulus, $\sigma_1$, $\sigma_2$, and~$\sigma_3$ are pairwise distinct phases, $\lambda_1$, $\lambda_2$, and~$\lambda_3$ are nonzero coefficients, and $u$ is a parameter of the flexion. Note that the elliptic modulus~$k$ may not be real. Selection of the cases of real~$t_1$, $t_2$, and~$t_3$ was made in~\cite[Sect.~7]{Gai14c}. In each of these cases, the standard transformation formulae for Jacobi's elliptic functions (see~\cite[Sect.~13.22]{BaEr55}) allow to rewrite the parametrization through Jacobi's elliptic functions with modulus in~$(0,1)$.

In the case of~$\Lob^3$ it was shown in~\cite{Gai14c} that the classification is the same as in the Euclidean case, namely, any essential flexible octahedron falls into one of the cases (1a), (1b), (2a), (2b), (2c), and~(3). Hence it belongs to one of Bricard's three types of flexible octahedra, which in the Lobachevsky case were constructed by Stachel~\cite{Sta06}. Note that in the case~(1a) we again have a parametrization of the form~\eqref{eq_dn_param}.

In the case of~$\bS^3$ two new phenomena occur. First, in the spherical case there is the following operation of \textit{replacing a vertex with its antipode}. Suppose that $\ba_i(u)$, $\bb_i(u)$, $i=1,2,3$, is a parametrization of a flexible octahedron. Then all edge lengths of the octahedron are constant. Consider any vertex, say~$\ba_1(u)$, and replace it with its antipode~$-\ba_1(u)$. Then all edge lengths of the obtained octahedron with vertices $-\ba_1(u)$, $\ba_2(u)$, $\ba_3(u)$, $\bb_1(u)$, $\bb_2(u)$, $\bb_3(u)$ are also constant, so we obtain a new well-defined flexible octahedron. Therefore, starting from one of the three types of Bricard's octahedra, one can obtain new flexible octahedra by replacing some of vertices with their antipodes. As a result, not every flexible octahedron in cases~(1a) and~(1b) is line-symmetric and not every flexible octahedron in cases~(2a), (2b), and~(2c) is plane-symmetric. It is only true that every flexible octahedron in cases~(1a) and~(1b) (respectively, (2a), (2b), and~(2c)) becomes line-symmetric (respectively, plane-symmetric) after replacing some of its vertices with their antipodes.

Second, in~$\bS^3$ there is a new type of flexible octahedra, which were called \textit{exotic flexible octahedra} in~\cite{Gai14c}. This type corresponds to the following new possibility for the curve~$\Gamma$, which cannot occur for octahedra in~$\E^3$ or~$\Lob^3$:
\smallskip

(4) $\R(t_3)=\R(t_1)\cap\R(t_2)$, $|\R(\Gamma)/\R(t_3)|=4$, and
$$
|\R(\Gamma)/\R(t_1)|=|\R(\Gamma)/\R(t_2)|=|\R(t_1)/\R(t_3)|=|\R(t_2)/\R(t_3)|=2.
$$

In this case the curve $\Gamma$ is elliptic. It can be parametrized in Jacobi's elliptic functions with an elliptic modulus $k\in(0,1)$. There are three subcases, which were called in~\cite{Gai14c} \textit{exotic flexible octahedra of the first, second, and third kind}, respectively. Up to swapping the indices~$1$ and~$2$, the parametrizations in these three subcases are as follows:
\begin{align}
 &\text{1st kind:}&
 t_1&=\lambda_1\dn u,&
 t_2&= \lambda_2\dn\left(u-\frac{K}2\right),&
 t_3&=\lambda_3\left(\dn u+\frac{k'}{\dn u}\right),\label{eq_1kind}\\
 &\text{2nd kind:}&
 t_1&=\lambda_1\dn u,&
 t_2&= \frac{\lambda_2\cn\left(u-K/2\right)}{\sn\left(u-K/2\right)},&
 t_3&=\lambda_3\left(\dn u-\frac{k'}{\dn u}\right),\label{eq_2kind}\\
 &\text{3rd kind:}&
 t_1&=\frac{\lambda_1\cn u}{\sn u},&
 t_2&= \frac{\lambda_2\cn\left(u-K/2\right)}{\sn\left(u-K/2\right)},&
 t_3&=\lambda_3\left(\frac{\cn u}{\sn u}-\frac{k'\sn u}{\cn u}\right).\label{eq_3kind}
\end{align}
Here $k'=\sqrt{1-k^2}$ is the complementary elliptic modulus,
$$
K=\int_0^1\frac{dx}{\sqrt{(1-x^2)(1-k^2x^2)}}
$$
is the real quarter-period, see~\cite[Chapter~XIII]{BaEr55}, and $\lambda_1$, $\lambda_2$, and~$\lambda_3$ are nonzero constants.

\begin{remark}\label{remark_t}
To simplify the formulae and reduce the number of cases in~\cite{Gai14c} the tangents of the halves of internal or external dihedral angles with one or another sign were chosen as parameters~$t_i$. Therefore, the parameters~$t_i$ in formulae~\eqref{eq_1kind}--\eqref{eq_3kind} may differ from the tangents of the halves of positively oriented dihedral angles by transformations of the form $t\mapsto\pm t^{\pm 1}$ and permutation of indices~$1,2,3$. Note that the transformations of the form $t\mapsto\pm t^{\pm 1}$ do not violate the above condition~(4).
\end{remark}

\begin{remark}\label{remark_kind}
 In fact, in~\cite{Gai14c} the author classified not flexible octahedra, but flexible octahedra with a distinguished face. This means that it has been shown that any flexible octahedron~$P$ falls into one of the types (1a), (1b), (2a), (2b), (2c), (3), and~(4) if the parameters~$t_1$, $t_2$, and~$t_3$ are taken to be the tangents of the halves of the dihedral angles at the edges of some face~$F$ of the octahedron, but the question of whether the type of the flexible octahedron depends on the choice of~$F$ has not been studied. However, it is actually not hard to show that the type does not depend on the choice of~$F$. In the case of type~(4), which is the only one of interest to us in the present paper, we will prove this, see Proposition~\ref{propos_44}. On the contrary, any exotic flexible octahedron (i.\,e. flexible octahedron of type~(4)) can be made into any of the three kinds~\eqref{eq_1kind}--\eqref{eq_3kind} by an appropriate choice of the face~$F$, see Proposition~\ref{propos_kinds} for more detail.
\end{remark}

In~\cite{Gai14c} the author has studied exotic flexible octahedra  from only an algebraic point of view. Parametrizations~\eqref{eq_1kind}--\eqref{eq_3kind} were obtained, but no geometric properties of these octahedra were studied. The aim of the present paper is to give a simple geometric construction of exotic flexible octahedra, and to study their geometric properties.

\subsection{Modified Bellows Conjecture}
Recall a general definition of a flexible polyhedron in~$\X^3$.

\begin{defin}
 Suppose that $K$ is an oriented closed two-dimensional simplicial manifold. A \textit{polyhedron} of combinatorial type~$K$ is a mapping $P\colon K\to \X^3$ whose restriction to any $2$-simplex of~$K$ is a linear (in the case $\X^3=\E^3$) or quasi-linear (in the cases $\X^3=\bS^3$ and $\X^3=\Lambda^3$) homeomorphism onto a non-degenerate triangle in~$\X^3$. Here a map $P$ of a $2$-simplex~$[v_0,v_1,v_2]$ to either~$\bS^3$ or~$\Lambda^3$ is said to be \textit{quasi-linear} if
 $$
 P(\lambda_0v_0+\lambda_1v_1+\lambda_2v_2)=\frac{\lambda_0P(v_0)+\lambda_1P(v_1)+\lambda_2P(v_2)}{|\lambda_0P(v_0)+\lambda_1P(v_1)+\lambda_2P(v_2)|},
 $$
 where $\lambda_0$, $\lambda_1$, and~$\lambda_2$ are barycentric coordinates and the sphere~$\bS^3$ (respectively, the Lobachevsky space~$\Lambda^3$) is identified with the standard vector model  of it in the Euclidean space~$\E^4$ (respectively, the pseudo-Euclidean space~$\E^{1,3}$).

 Now, suppose that the vertices of a polyhedron move continuously in~$\X^3$ so that all edge lengths remain constant and hence every face of the polyhedron remains congruent to itself. The obtained continuous family of polyhedra $P(u)\colon K\to\X^3$ is called a \textit{flex} unless it is induced by a continuous family of isometries of the whole space~$\X^3$.
\end{defin}

We denote by $\Vor(\bp_0,\bp_1,\bp_2,\bp_3)$ the oriented volume of a tetrahedron~$\bp_0\bp_1\bp_2\bp_3$ in~$\X^3$.

\begin{defin}
 The (\textit{generalized}) \textit{oriented volume} of a polyhedron $P\colon K\to \X^3$ is, by definition, the number
 \begin{equation*}
  \Vor(P)=\sum_{[v_0,v_1,v_2]\in K}\Vor\bigl(\bo,P(v_0),P(v_1),P(v_2)\bigr),
 \end{equation*}
 where the sum is taken over all positively oriented $2$-simplices of~$K$ and $\bo\in\X^3$ is a point chosen so that all tetrahedra~$\bo P(v_0)P(v_1)P(v_2)$ are non-degenerate. It is a standard fact that in the cases $\X^3=\E^3$ and~$\X^3=\Lambda^3$ the oriented volume $\Vor(P)$ is well defined, that is, independent of the choice of~$\bo$. For $\X^3=\bS^3$ the oriented volume~$\Vor(P)$ is well defined only up to adding $k\cdot\volume(\bS^3)=2\pi^2k$, where $k\in\Z$. So in this case we will always consider $\Vor(P)$ as an element of~$\R/2\pi^2\Z$.
\end{defin}

The following result was conjectured by Connelly~\cite{Con80} and proved by Sabitov~\cite{Sab96} (see also~\cite{CSW97,Sab98a,Sab98b}).

\begin{theorem}[Bellows Conjecture = Sabitov's theorem]
 The generalized oriented volume of any flexible polyhedron in~$\E^3$ is constant during the flexion.
\end{theorem}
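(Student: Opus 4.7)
The plan is to reduce the theorem to a purely algebraic statement about the volume and then invoke continuity. Specifically, I would first establish \emph{Sabitov's polynomial relation}: for any fixed combinatorial type $K$, there exists a monic polynomial
\[
Q(V;\mathbf{L})=V^{2N}+a_{2N-1}(\mathbf{L})V^{2N-1}+\dots+a_0(\mathbf{L})
\]
whose coefficients $a_j$ are polynomials in the variables $L_e=\ell_e^2$, one for each edge $e$ of $K$, with the property that every polyhedron $P\colon K\to\E^3$ having squared edge lengths $L_e$ and generalized oriented volume $V=\Vor(P)$ satisfies $Q(V;\mathbf{L})=0$. Granting this, the theorem follows at once: along a flex $P(u)$ the numbers $L_e$ are constant by definition, so $V(u)=\Vor(P(u))$ is a root of the fixed univariate polynomial $Q(V;\mathbf{L})\in\R[V]$ of degree $2N$. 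Hence $V(u)$ takes at most $2N$ distinct values, and by continuity in $u$ it must be constant.

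The heart of the proof is thus the construction of $Q$. I would proceed by induction on the complexity of $K$, systematically eliminating auxiliary variables via resultants. The base case is the Cayley--Menger determinant, which expresses the squared volume of a single tetrahedron as a polynomial in its six squared edge lengths. For a polyhedron of general type $K$, write $\Vor(P)$ as a signed sum of tetrahedral volumes using a triangulation (for instance the cone $\bo * K$ used in the definition of $\Vor$, or a diagonal triangulation of an embedded model); each squared tetrahedral volume satisfies its own Cayley--Menger relation. The squared lengths of the interior or auxiliary edges are not controlled by the flex and must be eliminated. For each such auxiliary squared length $L$, two adjacent tetrahedra provide two polynomial relations in $L$; taking their resultant in $L$ eliminates it. Iterating this over all auxiliary variables, one obtains a single polynomial relation in $V$ and the edge squared lengths $L_e$ alone.

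The main obstacle is controlling the leading coefficient of the resulting polynomial in $V$: at each resultant step the eliminated variable may appear with a nonconstant coefficient, so the resultant could fail to be monic in $V$, or even vanish identically for degenerate configurations. This is the crux of Sabitov's theorem. I would resolve it along Sabitov's original lines by proving that $V$ is \emph{integral} over the ring $\R[\mathbf{L}]$ using the theory of places: if $V$ were not integral, there would exist a discrete valuation of the field $\R(\mathbf{L},V)$, trivial on $\R$, at which all $L_e$ have nonnegative valuation while $V$ has negative valuation. Geometrically this would mean that along some one-parameter family of admissible configurations with bounded edge lengths the volume escapes to infinity, contradicting the a priori bound $|\Vor(P)|\le C(\mathbf{L})$ that follows immediately from the triangle inequality applied to the defining cone sum for $\Vor(P)$. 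A purely computational alternative, due to Connelly--Sabitov--Walz, bypasses valuations and verifies monicity directly by a careful degree analysis of the iterated resultants. Either route completes the proof, with this integrality step being the principal technical hurdle.
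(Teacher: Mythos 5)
The paper does not prove this theorem; it is quoted as a known result with references to Sabitov \cite{Sab96,Sab98a,Sab98b} and Connelly--Sabitov--Walz \cite{CSW97}, and your proposal is an outline of exactly the strategy of those cited works. The reduction you give from the polynomial identity $Q(V;\mathbf{L})=0$ to the Bellows Conjecture (finitely many possible values of $V$ plus continuity of $u\mapsto\Vor(P(u))$) is complete and correct. The construction of $Q$ itself is, as you acknowledge, only sketched, and one step of the sketch is too glib to stand as written: the passage from ``$|\Vor(P)|\le C(\mathbf{L})$ for real configurations'' to ``$V$ is integral over $\R[\mathbf{L}]$'' is not a valid implication. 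The theory-of-places argument requires showing that every place of the field $\R(\mathbf{L},V)$ (or of a suitable extension containing the vertex coordinates) that is finite on all $L_e$ is finite on $V$; such places take values in abstract valued fields, and the configurations they encode need not be real, so the a priori volume bound for genuine polyhedra in $\E^3$ does not directly apply. Handling this --- via a careful induction on the number of vertices and an analysis of how the Cayley--Menger relations and the diagonal lengths behave under the place --- is precisely the content of \cite{CSW97} and \cite{Sab98a}, so your proposal correctly locates the crux but defers it to the literature rather than resolving it.
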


Certainly, it is natural to ask whether the Bellows Conjecture (that is, the constancy of the oriented volume) holds for flexible polyhedra in other spaces of constant curvature~$\X^n$,  where $n\ge 3$. Several positive results were obtained by the author of the present paper. Namely, the Bellows Conjecture is true:
\begin{itemize}
 \item for any flexible polyhedron in~$\E^n$, where $n\ge 4$, see~\cite{Gai14a,Gai14b},
 \item for any bounded flexible polyhedron in~$\Lambda^{2k+1}$, where $k\ge 1$, see~\cite{Gai15a},
 \item for any flexible polyhedron with sufficiently small edge lengths in an arbitrary space of constant curvature~$\X^n$, where $n\ge 3$. More precisely, for any combinatorial type~$K$ of flexible polyhedra in~$\X^n$, there exists an $\varepsilon>0$ such that the volume of any flexible polyhedron $P(u)\colon K\to\X^n$ is constant, provided that all lengths of edges do not exceed~$\varepsilon$, see~\cite{Gai17}.
\end{itemize}

On the other hand, it is known that the Bellows Conjecture is false in the spherical case even if we restrict ourselves to considering polyhedra contained in an open hemisphere~$\bS^n_+$. The first example of a flexible polyhedron in~$\bS^3_+$ with a non-constant volume was constructed by Alexandrov~\cite{Ale97}. Later, for any $n\ge 3$, the author~\cite{Gai15b}  constructed a non-self-intersecting flexible cross-polytope in~$\bS^n_+$ with non-constant volume.

However, due to the fact that in the spherical case we have the operation of replacing a vertex with its antipode, we can look at the Bellows Conjecture from the following angle. For each spherical flexible polyhedron $P(u)$ with $m$ vertices, we can construct $2^m$ spherical flexible polyhedra~$P_1(u),\ldots,P_{2^m}(u)$ by replacing different sets of vertices of~$P(u)$ with their antipodes. So we can consider the $2^m$ oriented volumes $\Vor\bigl(P_1(u)\bigr),\ldots,\Vor\bigl(P_{2^m}(u)\bigr)$ as $2^m$ volumes corresponding to the original polyhedron~$P(u)$. It is natural to ask whether it is true that at least one of these $2^m$ volumes is constant during the flexion. This was conjectured by the author in~\cite{Gai15b}.

\begin{conj}[Modified Bellows Conjecture]
 Suppose that $P(u)$ is a flexible polyhedron in~$\bS^n$, where $n\ge 3$. Then we can replace some vertices of~$P(u)$ with their antipodes so that the oriented volume of the obtained flexible polyhedron~$P'(u)$ will be constant during the flexion.
\end{conj}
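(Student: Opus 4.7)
The plan is to reduce the conjecture to the case of flexible octahedra in~$\bS^3$ and then use the classification from~\cite{Gai14c}, together with the algebraic structure of the curve~$\Gamma$, to find, for each combinatorial type, a specific antipodal modification that forces $\Vor(P'(u))$ to be constant. For higher dimensions and larger simplicial manifolds~$K$, I would first try to extend Sabitov-type polynomial relations to the spherical setting: if $\Vor(P(u))$ satisfies an algebraic equation whose coefficients are polynomials in the squared edge lengths, then constancy along any irreducible component of the configuration space would follow from the fact that an algebraic function on a connected real curve taking only finitely many values is constant. This would reduce the problem to the finitely many ``minimal'' obstructions, which are the flexible cross-polytopes, and in~$\bS^3$ the octahedra.

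For octahedra in~$\bS^3$, the cases~(1a), (1b), (2a), (2b), and~(2c) should be treated by the observation in the introduction that after an appropriate antipodal replacement any flexible octahedron in those cases becomes line- or plane-symmetric. For a line-symmetric octahedron, taking $\bo$ on the axis~$L$ makes the sum defining $\Vor$ split into pairs of tetrahedra related by an orientation-reversing isometry; similarly for the plane-symmetric case with $\bo\in\Pi$. Hence $\Vor(P'(u))$ is determined by the intrinsic geometry of a quotient triangulation that is rigid, so the volume is constant. For type~(3) skew octahedra, the proportionalities $t_i\propto t_j^{\pm1}$ realize $\Vor$ as a rational function on~$\RP^1$, whose degree is limited by the reducibility analysis of~\eqref{eq_rel}; one then checks that a suitable antipodal replacement cancels all algebraic poles, leaving a constant.

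The main obstacle is the exotic type~(4). Here $\Gamma$ is genuinely elliptic and $\Vor$ becomes an elliptic function of~$u$ with a potentially nontrivial pole divisor. The plan is to enumerate the $2^6=64$ antipodal replacements of the six vertices; by Remark~\ref{remark_t}, each replacement transforms the tangents of half-angles at incident edges by $t\mapsto\pm t^{\pm 1}$, yielding $64$ candidate oriented volumes. Since the $\Z/2\times\Z/2$ action $t_i\mapsto\pm t_i^{\pm 1}$ preserves condition~(4) and permutes the parametrizations~\eqref{eq_1kind}--\eqref{eq_3kind}, one expects the pole structure of the $64$ candidates to organize into a small number of orbits, and the positive resolution of the conjecture would follow from exhibiting an orbit whose representative has identically vanishing $u$-derivative. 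The crux is therefore a direct elliptic-function computation of $d\Vor/du$ on the parametrization~\eqref{eq_1kind} (and the analogous ones for the second and third kinds), followed by a proof that for some sign pattern the residues of $d\Vor/du$ at the translates of $u=iK'$ all cancel. Whether such a cancellation pattern exists is the step where the argument is genuinely uncertain, and it is the sole place where the strategy could fail.
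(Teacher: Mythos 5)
There is a fundamental problem with the proposal: the statement you are trying to prove is false, and disproving it is precisely the main result of this paper. The statement is labeled a \emph{Conjecture}; the paper does not prove it but refutes it (Theorem~\ref{thm_volume}, made precise in Theorem~\ref{thm_volume_precise}). The step you yourself flagged as ``genuinely uncertain'' --- finding, for the exotic type~(4), a sign pattern of antipodal replacements for which the derivative of the volume cancels --- is exactly where the strategy provably fails, and not for lack of computational effort. The paper's argument runs as follows: by Theorem~\ref{thm_alg_geo}, every type-(4) flexible octahedron is, up to rotation and renaming, of the geometric form~\eqref{eq_flex} with $|p_1|\ne|p_2|$ and $|q_1|\ne|q_2|$; by Proposition~\ref{propos_antipode}, this class is \emph{closed} under replacing any set of vertices by their antipodes (the replacement only flips signs of $p_i$, $q_i$ or swaps the roles of $L_1$ and $L_2$); and by Proposition~\ref{propos_volume} and Corollary~\ref{cor_nonconst}, the oriented volume~\eqref{eq_volume} of every octahedron in this class is nonconstant on every arc of the configuration curve --- concretely, two configurations with the same diagonal parameter $y$ but opposite sign $\varepsilon_2$ have volumes differing by $\sgn(p_2)\,\pi A_2(y)\not\equiv 0 \pmod{2\pi^2}$, where $A_2(y)$ is the area of a nondegenerate spherical triangle. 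Hence none of the $2^6$ candidate volumes is constant, and the residue cancellation you hope for cannot occur for any sign pattern.

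Two secondary points. First, your proposed reduction of the general case to cross-polytopes via Sabitov-type polynomial relations is also unavailable in the spherical setting: no such polynomial relation between volume and edge lengths exists there, which is exactly why even the unmodified Bellows Conjecture already fails in~$\bS^3$ (Alexandrov's example~\cite{Ale97} and the embedded cross-polytopes of~\cite{Gai15b}); the volume in~$\bS^3$ is only defined modulo $2\pi^2\Z$ and behaves analytically, not algebraically, in the sense required by Sabitov's method. Second, your treatment of types (1a)--(2c) and (3) is at least directionally consistent with the paper's Introduction (after suitable antipodal replacement these become line- or plane-symmetric, respectively skew, Bricard octahedra, for which a constant branch of the volume exists), but this only confirms that the known counterexamples to the unmodified conjecture satisfy the modified one; it cannot be leveraged into a proof, because the exotic type~(4), which exists only in~$\bS^3$ and has no Euclidean or hyperbolic analogue, is a genuine and unavoidable counterexample.
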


Note that this conjecture is true for all counterexamples to the usual Bellows Conjecture constructed in~\cite{Ale97,Gai15b}. Nevertheless, in the present paper we will show that the Modified Bellows Conjecture is false for flexible polyhedra in~$\bS^3$. Namely, we will obtain the following result.

\begin{theorem}\label{thm_volume}
 Any exotic flexible octahedron in~$\bS^3$ is a counterexample to the Modified Bellows Conjecture.
\end{theorem}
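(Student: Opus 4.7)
\emph{Step 1: antipodal replacement preserves exoticness.} I would first show that replacing any subset of vertices of an exotic flexible octahedron $P(u)$ with their antipodes yields another exotic flexible octahedron. Granting this, the $64$-case statement of the theorem reduces to the single assertion: \emph{for every exotic flexible octahedron $Q(u)$ in $\bS^3$, the continuous real-valued lift of $u\mapsto\Vor(Q(u))$ is not constant.} The reduction is a field-theoretic observation. Replacing, say, $\ba_1$ by $-\ba_1$ transforms each of the three dihedral angles $\varphi_1,\varphi_2,\varphi_3$ at the edges of the (new) face $(-\ba_1)\ba_2\ba_3$ by $\varphi\mapsto\pm\varphi$ or $\varphi\mapsto\pi-\varphi$ (the two adjacent great 2-spheres at each such edge are unchanged, only the orientations of the triangles within them flip), so each $t_i=\tan(\varphi_i/2)$ transforms by a M\"obius map $t\mapsto\pm t^{\pm 1}$. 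By Remark~\ref{remark_t} these maps preserve condition~(4), so the modified octahedron is again exotic; replacing a $\bb$-vertex is handled analogously after taking a face containing it as the distinguished one.

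\emph{Step 2: explicit volume formula.} Next, using the geometric construction and the parametrizations~\eqref{eq_1kind}--\eqref{eq_3kind} developed earlier in the paper, I would write down $\bS^3$-coordinates for the six vertices as explicit functions of $u,k,\lambda_1,\lambda_2,\lambda_3$, and compute the oriented volume as a signed sum over the eight spherical tetrahedra $\bo\,Q(v_0)Q(v_1)Q(v_2)$ with a common apex $\bo$. An equivalent and cleaner route is the spherical Schl\"afli formula
$$
\frac{dV}{du}=\frac{1}{2}\sum_{j=1}^{12}\ell_j\,\frac{d\varphi_j}{du},
$$
which presents $V'(u)$ as a meromorphic elliptic function on the flex curve $\Gamma$, with constant coefficients~$\ell_j$ (the twelve edge lengths) and with $d\varphi_j/du$ explicit in Jacobi elliptic functions of~$u$ via~\eqref{eq_1kind}--\eqref{eq_3kind}. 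Constancy of $V(u)$ is then equivalent to the vanishing of a single, explicitly computable elliptic function on~$\Gamma$.

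\emph{Step 3: non-vanishing and the main obstacle.} Finally, I would verify $V'(u)\not\equiv 0$. Since $V'$ is elliptic on~$\Gamma$ with poles located only at the known poles of $\dn u$, $\cn u/\sn u$, and their $K/2$-shifts appearing in the parametrization, it is enough to exhibit either a single regular $u_0$ with $V'(u_0)\ne 0$ or a single pole with nonzero residue. In practice I would evaluate $V(u)$ at two conveniently chosen symmetric positions (for instance $u=0$ and $u=K/2$), where extra symmetries collapse many dihedral contributions and the remaining explicit expression can be shown to differ for generic moduli $\lambda_1,\lambda_2,\lambda_3,k$. The main obstacle lies precisely here: after the closed-form expression for $V'(u)$ is assembled, one must prevent the numerous Jacobi identities from collapsing it to zero, and the argument must be uniform in the moduli (and independent of which kind in~\eqref{eq_1kind}--\eqref{eq_3kind} one works with). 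A minor additional subtlety, handled by passing to a continuous real-valued lift of $V$ along the flex, is that $\Vor$ is defined only in $\R/2\pi^2\Z$ for $\bS^3$, so that non-constancy of the lift is equivalent to non-constancy of the volume class.
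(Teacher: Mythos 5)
Your Step 1 is sound and matches the paper's overall strategy: the paper likewise reduces the $2^6$-case statement to the single assertion that every exotic octahedron has nonconstant volume, via the fact that antipodal replacement preserves exoticness (its Proposition~\ref{propos_antipode} proves this geometrically, by showing the modified octahedron is again of the normal form~\eqref{eq_flex} with $|p_1'|\ne|p_2'|$, $|q_1'|\ne|q_2'|$; your M\"obius-map argument $t\mapsto\pm t^{\pm1}$ combined with Proposition~\ref{propos_44} is an acceptable alternative). The real divergence, and the real problem, is in Steps 2--3. You propose to express $V'(u)$ via the Schl\"afli formula as an elliptic function built from the parametrizations~\eqref{eq_1kind}--\eqref{eq_3kind} and then show it is not identically zero. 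But you never carry out that verification: you explicitly flag the danger that Jacobi identities could collapse the expression to zero, and your fallback (``can be shown to differ for generic moduli'') proves the wrong statement, since the theorem asserts that \emph{every} exotic octahedron --- for \emph{all} admissible $\lambda_1,\lambda_2,\lambda_3,k$ --- is a counterexample. A genericity argument would leave open exactly the possibility the Modified Bellows Conjecture allows, namely that for special parameter values some antipodal modification has constant volume. As it stands, the core of the theorem (nonconstancy) is asserted, not proved.

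It is worth seeing how the paper avoids this obstacle entirely, because the mechanism is structural rather than computational. Using the geometric normal form~\eqref{eq_flex}, the octahedron is cut into four tetrahedra along the diagonal $\ba_1\bb_1$; the rotation by $\pi$ about $L_1$ converts the two tetrahedra containing $\bb_3$ into tetrahedra containing $\ba_3$ with $-\ba_2$, $-\bb_2$, so the volume becomes $\pm\frac{\pi}{2}\bigl(\varepsilon_1A_1(y)-\varepsilon_2A_2(y)\bigr)$, where $A_1,A_2$ are areas of spherical triangles depending only on the diagonal length $y=\langle\ba_1,\bb_1\rangle$ (Proposition~\ref{propos_volume}). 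Nonconstancy is then immediate: the configuration curve contains, for each $y_0\in(y_{\min},y_{\max})$, two points differing only in the sign $\varepsilon_2$, and the volumes at these two points differ by $\pi A_2(y_0)\ne 0$ because the relevant tetrahedron is nondegenerate there. No elliptic identities, no genericity, and the argument is manifestly uniform in the parameters. If you want to rescue your route, you would need either to actually exhibit a nonzero value or residue of $V'(u)$ valid for all admissible moduli (the computation the paper's Remark~\ref{remark_direct} sketches in the variable $y$, showing a certain polynomial $Q(y)$ has a nonzero coefficient combination $c_3+c_0$), or to find a pair of configurations with equal $u$-independent data but provably different volumes, which is in effect what the $\varepsilon_2$-flip accomplishes.
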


A rigorous definition of an exotic flexible octahedron will be provided in Section~\ref{section_Bricard}, see Definition~\ref{defin_exotic}.  A more precise statement of Theorem~\ref{thm_volume} will be given in Section~\ref{section_volume}, see Theorem~\ref{thm_volume_precise}.

\subsection{Structure of the paper} In Section~\ref{section_constr}, we give a geometric construction of exotic flexible octahedra in~$\bS^3$. In Section~\ref{section_Bricard}, we have collected the known facts about flexible octahedra and their configuration spaces that we need. In Section~\ref{section_alg_geo} we establish the equivalence between the algebraic and geometric descriptions of exotic flexible octahedra. Namely, we prove that a flexible octahedron in~$\bS^3$ is given by the geometric construction from Section~\ref{section_constr} if and only if it falls into the case~(4) in the above algebraic classification. Section~\ref{section_config_exotic} contains a description of the configuration space of an exotic flexible octahedron. In Section~\ref{section_volume}, we calculate the oriented volume of an exotic flexible octahedron and prove Theorem~\ref{thm_volume}. Note that the proof of this theorem relies only on the geometric construction of exotic flexible octahedra from Section~\ref{section_constr} and does not use their explicit parametrization in elliptic functions obtained in~\cite{Gai14c}. Therefore, Sections~\ref{section_config_exotic} and~\ref{section_volume} can be read independently of Section~\ref{section_alg_geo}, which establishes the equivalence of the geometric and algebraic descriptions. Moreover, by replacing the arguments related to the analyticity of the volume function with direct, albeit somewhat cumbersome, calculations, one can make the proof of Theorem~\ref{thm_volume} in Section~\ref{section_volume} independent of the content of Section~\ref{section_config_exotic} as well, see Remark~\ref{remark_direct}. Finally, Section~\ref{section_conclusion} contains some remarks and open questions.

\section{Geometric construction of an exotic flexible octahedron}\label{section_constr}

We realize $\bS^3$ as the unit sphere centered at the origin in the Euclidean space~$\E^4$ with coordinates $x_1,x_2,x_3,x_4$. In describing geometric constructions in~$\bS^3$, we will use the terminology of spherical geometry. In particular, great circles and great spheres will be called lines and planes, respectively. We denote by $\dist(\bu,\bv)$ the spherical distance between two points $\bu,\bv\in\bS^3$, and by~$\angle \bu\bv\bw$ the angle between the arcs of great circles~$\bv\bu$ and~$\bv\bw$. We denote the scalar product in~$\E^4$ by~$\langle\cdot,\cdot\rangle$.

Let $L_1$ be the line (great circle) in~$\bS^3$ given by $x_3=x_4=0$ and $L_2$ the line given by $x_1=x_2=0$. Then the distance in $\bS^3$ from every point of~$L_1$ to every point of~$L_2$ is equal to~$\pi/2$. Moreover, any point $\br\in\bS^3\setminus (L_1\cup L_2)$ can be uniquely written as
$$
\cos \theta\cdot \br_1+\sin \theta\cdot \br_2,\qquad \br_1\in L_1,\ \br_2\in L_2,\ \theta\in\left(0,\frac{\pi}{2}\right).
$$
Note that rotation of~$\bS^3$ around~$L_1$ keeps $L_2$ invariant, and vice versa.

We consider an octahedron $\ba_1\ba_2\ba_3\bb_1\bb_2\bb_3$ in~$\bS^3$ such that
\smallskip

(1) $\ba_1,\bb_1\in L_1$ and $\ba_2,\bb_2\in L_2$,
\smallskip

(2) the vertices $\ba_3$ and $\bb_3$ are symmetric to each other with respect to~$L_1$, that is, are obtained from each other by the rotation by~$\pi$ around~$L_1$,
\smallskip

(3) the faces of the octahedron are non-degenerate, in particular, $\bb_1\ne\pm\ba_1$, $\bb_2\ne\pm\ba_2$, and $\ba_3,\bb_3\notin L_1\cup L_2$.
\smallskip

\begin{figure}
 \begin{tikzpicture}[scale =1.3]

\path [name path = vert2] (-.1,-1.35)--(-.1,5);
\path [name path = vert3] (-.625,0) -- (-.625,5);
\path [name path = vert4] (.75,0) -- (.75,5);
\path [name path = vert5] (-2,-1.5) -- (-2,0);
\draw [line width=1pt, name path =ell] ellipse (3 and 1.5);
  \fill [white] (-.15,1.35) rectangle +(.3,.3);
\draw [line width=1pt,name path = vert1] (0,-1.35)--(0,5);

  \path (0,-.5) coordinate (a1) {}
        (0,2.5) coordinate (b1) {}
        (1.5,4.5) coordinate (b3) {}
          edge [draw,blue,bend left=10] (a1)
          edge [draw,violet,bend right=15] (b1)
        (-2.233,1) coordinate (a2) {}
          edge [draw,cyan,bend left=20,dashed] (b3)
          edge [draw,red,bend right=10] (a1)
          edge [draw,red,bend left=25] (b1)
        (2.827,-.5) coordinate (b2) {}
          edge [draw,red,bend left=10] (a1)
          edge [draw,red,bend right=25] (b1)
          edge [draw,green,bend right=10,dashed] (b3)
        (-1.25,3) coordinate (a3) {}
          edge [draw,blue,bend right=10] (a1)
          edge [draw,violet,bend left=15] (b1)
          edge [draw,cyan,bend right=10] (a2)
          edge [draw,green,bend right=6] (b2)
          edge [draw,gray,bend left=15,name path = diag] (b3);

  \fill (0,-.5) circle (2pt) node [below right] {$\ba_1$};
  \fill (0,2.5) circle (2pt) node [right,yshift = 2pt] {$\bb_1$};
  \fill (2.827,-.5) circle (2pt) node [below right] {$\bb_2$};
  \fill (-2.233,1) circle (2pt) node [below=3pt] {$\ba_2$};
  \fill (1.5,4.5) circle (2pt) node [above right] {$\bb_3$};
  \fill (-1.25,3) circle (2pt) node [above left] {$\ba_3$};

  \path[name intersections={of=vert1 and diag}];
  \path (intersection-1) coordinate (int1);
  \path[name intersections={of=vert2 and diag}];
  \draw (intersection-1) coordinate (int2);
  \path[name intersections={of=vert3 and diag}];
  \path (intersection-1) coordinate (int3);
  \path[name intersections={of=vert4 and diag}];
  \draw (intersection-1) coordinate (int4);
  \coordinate [above = .15cm] (P) at (int1);
  \coordinate [above = .15cm] (Q) at (int2);
  \draw [gray] (int2) -- (Q) -- (P);
  \draw [gray] (int3) -- +(0,.07);
  \draw [gray] (int3) -- +(0,-.07);
  \draw [gray] (int4) -- +(0,.07);
  \draw [gray] (int4) -- +(0,-.07);

  \draw [line width=1pt] (0,2.5)--(0,5);
  \draw [line width=1pt] (0,-1.65)--(0,-3) node [pos=.6,right] {\normalsize$L_1$};
  \path[name intersections={of=vert5 and ell}];
  \draw (intersection-1) node [below] {$L_2$};

 \end{tikzpicture}

 \caption{The octahedron}\label{fig_octahedron}

\end{figure}
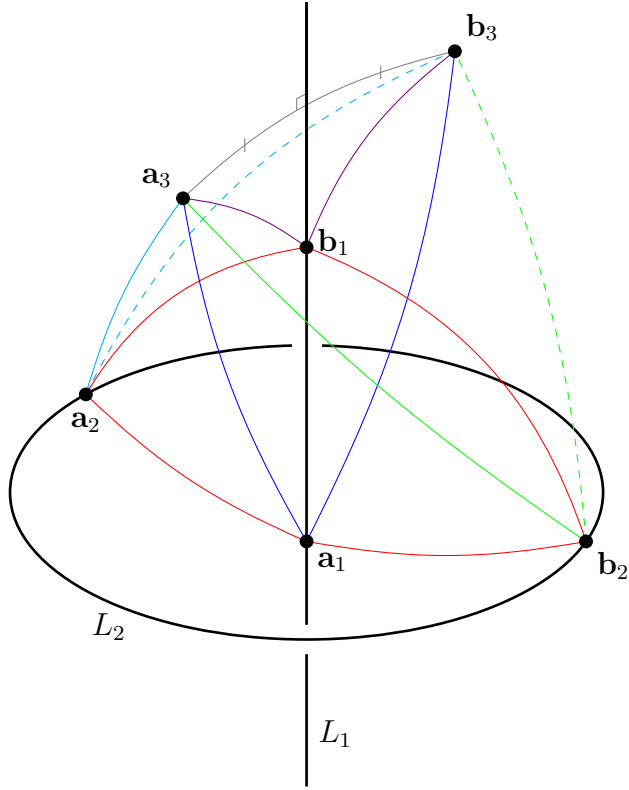

Rotating the octahedron $\ba_1\ba_2\ba_3\bb_1\bb_2\bb_3$ around the lines~$L_1$ and~$L_2$, we may always achieve that the vertex~$\ba_3$ lies on the arc given by $x_2=x_4=0$, $x_1>0$, $x_3>0$. Then
\begin{align*}
 \ba_1&=(\cos\alpha_1,\sin\alpha_1,0,0),&
 \bb_1&=(\cos \alpha_2,\sin \alpha_2,0,0),\\
 \ba_2&=(0,0,\cos \beta_1,\sin \beta_1),&
 \bb_2&=(0,0,\cos \beta_2,\sin \beta_2),\\
 \ba_3&=(\cos\theta,0,\sin\theta,0),&
 \bb_3&=(\cos\theta,0,-\sin\theta,0)
\end{align*}
for some $\alpha_1,\alpha_2,\beta_1,\beta_2\in \R$ and $\theta\in(0,\pi/2)$.
We denote by~$\ell_{\bu\bv}$ the length of an edge~$\bu\bv$ of the octahedron.

Then we have
\begin{gather*}
\nonumber \ell_{\ba_1\ba_2}=\ell_{\ba_1\bb_2}=\ell_{\bb_1\ba_2}=\ell_{\bb_1\bb_2}=\frac{\pi}{2}\,,\\
\ell_{\ba_1\ba_3}=\ell_{\ba_1\bb_3},\qquad \ell_{\bb_1\ba_3}=\ell_{\bb_1\bb_3},\\
 \ell_{\ba_2\ba_3}+\ell_{\ba_2\bb_3}=\ell_{\bb_2\ba_3}+\ell_{\bb_2\bb_3}=\pi.
\nonumber
\end{gather*}
So all edge lengths of the octahedron are solely determined by the four edge lengths~$\ell_{\ba_1\ba_3}$, $\ell_{\bb_1\ba_3}$, $\ell_{\ba_2\ba_3}$, and~$\ell_{\bb_2\ba_3}$; we denote the \textit{cosines} of those four edge lengths by~$p_1$, $p_2$, $q_1$, and~$q_2$, respectively. Then we have
\begin{align*}
 p_1&=\langle\ba_1,\ba_3\rangle=\cos\alpha_1\cos\theta,&
 p_2&=\langle\bb_1,\ba_3\rangle=\cos\alpha_2\cos\theta,\\
 q_1&=\langle\ba_2,\ba_3\rangle=\cos\beta_1\sin\theta,&
 q_2&=\langle\bb_2,\ba_3\rangle=\cos\beta_2\sin\theta.
\end{align*}
Therefore,
\begin{equation}\label{eq_flex}
\begin{aligned}
 \ba_1&=\left(\frac{p_1}{\cos\theta},\delta_1\sqrt{1-\frac{p_1^2}{\cos^2\theta}},0,0\right),&&&
 \bb_1&=\left(\frac{p_2}{\cos\theta},\delta_2\sqrt{1-\frac{p_2^2}{\cos^2\theta}},0,0\right),\\
 \ba_2&=\left(0,0,\frac{q_1}{\sin\theta},\varepsilon_1\sqrt{1-\frac{q_1^2}{\sin^2\theta}}\right),&&&
 \bb_2&=\left(0,0,\frac{q_2}{\sin\theta},\varepsilon_2\sqrt{1-\frac{q_2^2}{\sin^2\theta}}\right),\\
  \ba_3&=(\cos\theta,0,\sin\theta,0),&&&
  \bb_3&=(\cos\theta,0,-\sin\theta,0),
\end{aligned}
\end{equation}
where the signs $\delta_1,\delta_2,\varepsilon_1,\varepsilon_2\in\{-1,1\}$ can be chosen arbitrarily. The obtained octahedron is shown in Fig.~\ref{fig_octahedron}. In this figure, the sphere~$\bS^3$ is shown using a stereographic projection chosen in such a way that $L_1$ becomes a straight line. The lengths of all red edges are~$\pi/2$. The full segments of each colour are equal to each other, and the full and dashed segments of each colour  complement each other up to~$\pi$.

We immediately obtain the following proposition.

\begin{propos}
 Suppose that
 \begin{equation*}
  \max\left\{p_1^2,p_2^2\right\}+\max\left\{q_1^2,q_2^2\right\}<1.
 \end{equation*}
Then formulae \eqref{eq_flex} provide a flexion of an octahedron in~$\bS^3$. The parameter~$\theta$ runs over~$[\theta_{\min},\theta_{\max}]$, where
\begin{align*}
 \theta_{\min}&=\arcsin\bigl(\max\{|q_1|,|q_2|\}\bigr),&
 \theta_{\max}&=\arccos\bigl(\max\{|p_1|,|p_2|\}\bigr).
\end{align*}
\end{propos}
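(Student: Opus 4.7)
The approach is a direct verification, broken into three steps: finding the admissible range of $\theta$, checking that all edge lengths are independent of $\theta$, and confirming that we obtain a genuine (non-trivial, non-degenerate) flex.

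First, I would identify the range. The four square roots in \eqref{eq_flex} are real precisely when $p_i^2\le\cos^2\theta$ and $q_j^2\le\sin^2\theta$ for $i,j\in\{1,2\}$; under the standing restriction $\theta\in(0,\pi/2)$, this translates to $\theta\in[\theta_{\min},\theta_{\max}]$. The strict hypothesis rewrites as $\sin^2\theta_{\min}+\cos^2\theta_{\max}<1$, i.\,e.,~$\theta_{\min}<\theta_{\max}$, so the interval is non-degenerate. A one-line check (for instance $|\ba_1|^2=(p_1/\cos\theta)^2+1-(p_1/\cos\theta)^2=1$) shows that each of the six vectors has unit Euclidean norm, hence lies in~$\bS^3$.

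Second, I would verify constancy of the twelve edge lengths. Since $\ba_1,\bb_1$ are supported on the coordinates $x_1,x_2$ while $\ba_2,\bb_2$ are supported on $x_3,x_4$, all four scalar products $\langle\ba_1,\ba_2\rangle$, $\langle\ba_1,\bb_2\rangle$, $\langle\bb_1,\ba_2\rangle$, $\langle\bb_1,\bb_2\rangle$ vanish identically, producing the four edges of length $\pi/2$. The remaining scalar products collapse to $\langle\ba_1,\ba_3\rangle=\langle\ba_1,\bb_3\rangle=p_1$, $\langle\ba_2,\ba_3\rangle=-\langle\ba_2,\bb_3\rangle=q_1$, and their obvious analogues with $p_2,q_2$; all are independent of $\theta$, reproducing the edge-length relations listed before \eqref{eq_flex}.

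Third, I would rule out degeneracies and triviality. For non-degeneracy of faces, I would compute the $3\times 3$ minors of the $4\times 3$ coordinate matrix of each triple $\{\ba_i\text{ or }\bb_i,\,\ba_j\text{ or }\bb_j,\,\ba_3\text{ or }\bb_3\}$; a short case analysis shows that such a face degenerates only when the two radicands $1-p_i^2/\cos^2\theta$ and $1-q_j^2/\sin^2\theta$ vanish simultaneously, i.\,e.,~$p_i^2+q_j^2=1$, which the hypothesis excludes for every pair $(i,j)\in\{1,2\}^2$. Finally, the diagonal distance $\dist(\ba_3,\bb_3)=2\theta$ strictly varies over $[2\theta_{\min},2\theta_{\max}]$, so no continuous family of ambient isometries of~$\bS^3$ can realise the motion; thus \eqref{eq_flex} is a genuine flex. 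The only slightly delicate step is the non-degeneracy check across all eight faces, but the symmetry of the hypothesis in $(p_1,p_2,q_1,q_2)$ reduces it to a single determinant computation up to relabelling.
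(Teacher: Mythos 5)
Your proof is correct and follows the same direct-verification route the paper intends: the paper states this proposition as an immediate consequence of the computation preceding it (``We immediately obtain the following proposition'') and gives no separate proof. Your three checks --- the range of $\theta$ from the reality of the radicands together with $\theta_{\min}<\theta_{\max}$, the constancy of all twelve scalar products, and non-degeneracy/non-triviality via the $3\times 3$ minors and the varying diagonal $\dist(\ba_3,\bb_3)=2\theta$ --- supply exactly the details the paper leaves to the reader.
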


Let us show that $|p_1|=|p_2|$ or $|q_1|=|q_2|$ lead to non-interesting examples of flexible octahedra. Suppose that $|p_1|=|p_2|$. Then we have two possibilities. If $\delta_1\sgn(p_1)=\delta_2\sgn(p_2)$, then $\ba_1=\pm\bb_1$, so the octahedron is inessential.  Assume that $\delta_1\sgn(p_1)=-\delta_2\sgn(p_2)$. Replace the vertex~$\bb_1$ with~$\bb_1'=\delta_1\delta_2\bb_1$ and the vertex~$\bb_3$ with $\bb_3'=-\bb_3$. Then the obtained octahedron $\ba_1\ba_2\ba_3\bb_1'\bb_2\bb_3'$ is symmetric with respect to the plane (great sphere)~$H$ given by $x_1=0$ so that $\ba_1$ is symmetric to~$\bb_1'$, $\ba_3$ is symmetric to~$\bb_3'$ and $\ba_2,\bb_2\in H$. Thus, we obtain a usual plane-symmetric Bricard octahedron.
Similarly, if $|q_1|=|q_2|$, then the octahedron is also either inessential or can be transformed to a plane-symmetric Bricard octahedron by replacing some of vertices with their antipodes. All these cases are not interesting for us.

In Section~\ref{section_alg_geo} we will prove that the constructed flexible octahedron~\eqref{eq_flex} is an exotic flexible octahedron (i.\,e. satisfies condition~(4) from Introduction), provided that $|p_1|\ne |p_2|$ and~$|q_1|\ne |q_2|$.

\begin{propos}\label{propos_antipode}
 Suppose that $\ba_1\ba_2\ba_3\bb_1\bb_2\bb_3$ is a flexible octahedron of the form~\eqref{eq_flex} with some parameters~$p_1$, $p_2$, $q_1$, and~$q_2$ such that $|p_1|\ne |p_2|$ and~$|q_1|\ne |q_2|$. Let $\ba_1'\ba_2'\ba_3'\bb_1'\bb_2'\bb_3'$ be a flexible octahedron obtained from $\ba_1\ba_2\ba_3\bb_1\bb_2\bb_3$ by replacing some of its vertices with their antipodes. Then, up to rotation of\/~$\bS^3$ and renaming the vertices, the new octahedron  has also the form~\eqref{eq_flex} with some parameters~$p_1'$, $p_2'$, $q_1'$, and~$q_2'$ such that $|p_1'|\ne |p_2'|$ and~$|q_1'|\ne |q_2'|$.
\end{propos}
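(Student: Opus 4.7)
The plan is to reduce the $2^6=64$ possible antipode replacements to two essentially different cases, by exploiting a global symmetry and a combinatorial renaming. I will encode the replacement by signs $s_i,t_i\in\{\pm1\}$ with $\ba_i'=s_i\ba_i$ and $\bb_i'=t_i\bb_i$. First I note that simultaneously flipping all six signs corresponds to the antipodal map of $\bS^3$, which is the rotation $-I\in\mathrm{SO}(4)$, so without loss of generality we may take $s_3=+1$. The second key input is the identity $\sigma_{L_1}\circ\sigma_{L_2}=-I$ between the half-turns around the lines $L_1$ and $L_2$ (in our coordinates $\sigma_{L_1}=\mathrm{diag}(1,1,-1,-1)$ and $\sigma_{L_2}=\mathrm{diag}(-1,-1,1,1)$); this implies that $\bb_3=\sigma_{L_1}(\ba_3)$ is equivalent to $-\bb_3=\sigma_{L_2}(\ba_3)$, i.e., negating exactly one of $\ba_3,\bb_3$ switches the axis of symmetry of that pair from $L_1$ to $L_2$.

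The proof then splits into two cases according to the value of $t_3$. If $t_3=+1$, the pair $(\ba_3,\bb_3)$ remains symmetric about $L_1$, and since $L_1,L_2$ are set-wise invariant under $-I$ and $\ba_3$ is still in the standard arc, the new octahedron has the form \eqref{eq_flex} directly, with $p_1'=s_1p_1$, $p_2'=t_1p_2$, $q_1'=s_2q_1$, $q_2'=t_2q_2$; in particular $|p_i'|=|p_i|$ and $|q_i'|=|q_i|$. If $t_3=-1$, then by the key observation the pair $(\ba_3,-\bb_3)$ is symmetric about $L_2$ rather than $L_1$. To restore the convention that the axis of symmetry of the third pair coincides with the line carrying the first pair, I will rename $\ba_1\leftrightarrow\ba_2$ and $\bb_1\leftrightarrow\bb_2$, then apply the rotation $R:(x_1,x_2,x_3,x_4)\mapsto(x_3,x_4,x_1,x_2)\in\mathrm{SO}(4)$, which swaps $L_1$ and $L_2$ and sends $\ba_3=(\cos\theta,0,\sin\theta,0)$ to $(\sin\theta,0,\cos\theta,0)$, still in the standard arc with new parameter $\theta'=\pi/2-\theta$. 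Since $R$ is an isometry, a short inner-product computation yields $p_1'=s_2q_1$, $p_2'=t_2q_2$, $q_1'=s_1p_1$, $q_2'=t_1p_2$, so $\{|p_1'|,|p_2'|\}=\{|q_1|,|q_2|\}$ and $\{|q_1'|,|q_2'|\}=\{|p_1|,|p_2|\}$.

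In either case, the hypotheses $|p_1|\ne|p_2|$ and $|q_1|\ne|q_2|$ translate directly into $|p_1'|\ne|p_2'|$ and $|q_1'|\ne|q_2'|$. The main bookkeeping task will be to verify that any further rotation around $L_1$ or $L_2$ that one might need in order to bring $\ba_3'$ exactly onto the standard arc $\{x_2=x_4=0,\,x_1,x_3>0\}$ only affects the signs of the $p_i'$ and $q_i'$ and hence leaves the absolute values untouched; this is straightforward because the only such rotations available act as $\pm I$ on each of the invariant circles $L_1$ and $L_2$. I do not anticipate any deeper obstacle: the proposition ultimately reflects the elementary fact that the antipodal action on $\bS^3$ commutes with the geometric structure $(L_1,L_2,\sigma_{L_1})$ defining the form \eqref{eq_flex} up to the $\Z/2$ swap of $L_1$ with $L_2$.
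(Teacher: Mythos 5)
Your proof is correct and follows essentially the same route as the paper's: the replacements of $\ba_1,\bb_1,\ba_2,\bb_2$ merely flip the signs of $p_1,p_2,q_1,q_2$, while replacing one of $\ba_3,\bb_3$ switches the symmetry axis of that pair from $L_1$ to $L_2$ and is undone by the renaming $\ba_1\leftrightarrow\ba_2$, $\bb_1\leftrightarrow\bb_2$ together with a rotation swapping $L_1$ and $L_2$, which interchanges the $p$'s and $q$'s. Your explicit reduction to $s_3=+1$ via $-I\in\mathrm{SO}(4)$ and the identity $\sigma_{L_1}\circ\sigma_{L_2}=-I$ is just a slightly more systematic bookkeeping of the same idea.
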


\begin{proof}
 Replacing the vertices~$\ba_1$, $\bb_1$, $\ba_2$, and~$\bb_2$  with their antipodes only results in reversals of the signs of the numbers~$p_1$, $p_2$, $q_1$, and~$q_2$, respectively, so we again obtain a flexible octahedron of the desired form. Assume that we replace~$\bb_3$ with its antipode~$-\bb_3$. (The case of~$\ba_3$ is completely similar.) The points~$\ba_3$ and~$-\bb_3$ are symmetric to each other with respect to the line~$L_2$. Therefore, if we rename the vertices by~$\ba_1\leftrightarrow\ba_2$ and $\bb_1\leftrightarrow\bb_2$ and rotate~$\bS^3$ so as to swap~$L_1$ and~$L_2$, then we arrive at a flexible octahedron of the form~\eqref{eq_flex} with parameters $p_1'=q_1$, $p_2'=q_2$, $q_1'=p_1$, and~$q_2'=p_2$.
\end{proof}

\begin{remark}
 The construction described fits well with the general idea, going back to Bricard, that imposing certain symmetry conditions can yield flexible polyhedra, see~\cite{Sta00} and~\cite{Gai18} for further results in this direction. Note, however, that so far the idea of imposing symmetry conditions has only worked in dimensions $3$ and~$4$. It would be interesting to find out whether it is possible to construct flexible polyhedra of higher dimensions based on symmetry considerations.
\end{remark}

\section{Configuration spaces of flexible octahedra}
\label{section_Bricard}

Consider an octahedron~$P=\ba_1\ba_2\ba_3\bb_1\bb_2\bb_3$ in~$\bS^3$.
We choose and fix the set~$\bell=(\ell_{\bu\bv})$ of $12$ edge lengths, where $\{\bu,\bv\}$ runs over all (unordered) pairs of vertices, except for the three pairs~$\{\ba_i,\bb_i\}$, $i=1,2,3$. We always assume that the lengths of edges of each face~$\bu_1\bu_2\bu_3$ are the lengths of sides of a non-degenerate spherical triangle, that is, satisfy the inequalities $0<\ell_{\bu_i\bu_j}<\pi$, the strict triangle inequalities $\ell_{\bu_i\bu_j}<\ell_{\bu_i\bu_k}+\ell_{\bu_j\bu_k}$ and the inequality $\ell_{\bu_1\bu_2}+\ell_{\bu_2\bu_3}+\ell_{\bu_1\bu_3}<2\pi$.

Our goal is to study the configuration space~$\Sigma(\bell)$ of all octahedra~$P$ with the prescribed set of edge lengths~$\bell$, up to rotations of~$\bS^3$. As coordinates in this space, we conveniently take the tangents
$$
t_{\bu\bv}=\tan\left(\frac{\varphi_{\bu\bv}}{2}\right),
$$
where $\varphi_{\bu\bv}$ are the \textit{oriented dihedral angles} at the edges~$\bu\bv$ of the octahedron. Following~\cite[Sect.~2]{Gai15b}, we choose the signs of the oriented dihedral angles as follows. First, we orient the polyhedral surface~$P$ so that the orientation of the face~$\ba_1\ba_2\ba_3$ is given by the indicated order of its vertices. Second, suppose that~$\bu\bv$ is an edge of the octahedron and $\bu\bv\bw_1$ and~$\bu\bv\bw_2$ are the two faces that contain this edge. Choose a point~$\bp$ on the edge~$\bu\bv$. For $i=1,2$, let $\bm_i\in T_{\bp}\bS^3$ be the \textit{exterior unit normal vector} to the face~$\bu\bv\bw_i$, that is, the unit vector orthogonal to~$\bu\bv\bw_i$ such that the product of the orientation of the normal to~$\bu\bv\bw_i$ given by the vector~$\bm_i$ by the positive orientation of the face~$\bu\bv\bw_i$ itself yields the positive orientation of~$\bS^3$. Further, let $\bn_i\in T_{\bp}\bS^3$ be the unit vector tangent to the face~$\bu\bv\bw_i$ and pointing inside it. Choose a positive direction around the edge~$\bu\bv$ so that $\bn_1$ is obtained from~$\bm_1$ by the rotation through an angle~$\pi/2$ in the positive direction. By definition, the \textit{oriented dihedral angle}~$\varphi_{\bu\bv}$ is the rotation angle from~$\bn_1$ to~$\bn_2$ in this positive direction. The angle~$\varphi_{\bu\bv}$ is well defined modulo~$2\pi\Z$. It is easy to check that it is independent of the choice of the point~$\bp$ and also of which of the two faces adjacent to~$\bu\bv$ is denoted by~$\bu\bv\bw_1$.

The tangents~$t_{\bu\bv}$ take their values in~$\R\cup\{\infty\}$. We conveniently identify~$\R\cup\{\infty\}$ with~$\RP^1$ so that $t_{\bu\bv}$ is an affine coordinate on~$\RP^1$.

Now, let $\bu$ be an arbitrary vertex of the octahedron, and let $\bu\bv_1\bv_2$, $\bu\bv_2\bv_3$, $\bu\bv_3\bv_4$, and~$\bu\bv_4\bv_1$ be the four faces containing~$\bu$.  We denote by $\bS^2_{\bu}$ the unit sphere in the tangent space~$T_{\bu}\bS^3$. Let $\bv_1',\ldots,\bv_4'\in \bS^2_{\bu}$ be the unit tangent vectors to the arcs of great circles $\bu\bv_1,\ldots,\bu\bv_4$, respectively. The spherical quadrilateral in~$\bS^2_{\bu}$ consisting of the four edges~$\bv_1'\bv_2'$, $\bv_2'\bv_3'$, $\bv_3'\bv_4'$, and~$\bv_4'\bv_1'$ is called the \textit{link} of the vertex~$\bu$ of the octahedron. The lengths of these four edges of the link are equal to the following angles of faces of the octahedron, respectively:
\begin{align*}
 \alpha&=\angle \bv_1\bu\bv_2,&
 \beta&=\angle \bv_2\bu\bv_3,&
 \gamma&=\angle \bv_3\bu\bv_4,&
 \delta&=\angle \bv_4\bu\bv_1.
\end{align*}
The angles of the link are equal to the corresponding dihedral angles of the octahedron.

Hereafter, by a \textit{spherical quadrilateral} we mean an arbitrary quadruple of points~$\bc_1$, $\bc_2$, $\bc_3$,~$\bc_4$ on sphere~$\bS^2$, together with the shortest great-circle arcs~$\bc_i\bc_{i+1}$ connecting them in a cyclic order, with the sole condition that $\bc_{i+1}\ne \pm\bc_i$ for $i=1,2,3,4$ (where sums of indices are taken modulo~$4$). So the lengths of sides of a spherical quadrilateral always belong to~$(0,\pi)$. We impose no other non-degeneracy conditions.

The following proposition about spherical quadrilaterals is due to Bricard~\cite{Bri97}. (Bricard studied tetrahedral angles in $3$-space rather than spherical quadrilaterals. Nevertheless, these two objects are obviously equivalent.)

\begin{propos}\label{propos_Bricard}
 The tangents~$t_1=t_{\bu\bv_1}$ and~$t_2=t_{\bu\bv_2}$ satisfy the biquadratic equation
 \begin{equation}
  At_1^2t_2^2+Bt_1^2+2Ct_1t_2+Dt_2^2+E=0,\label{eq_Bricard}
 \end{equation}
 where
 \begin{gather}
  A=\cos\gamma-\cos(\alpha+\beta+\delta),\qquad
  B=\cos\gamma-\cos(\alpha-\beta+\delta),\nonumber\\
  D=\cos\gamma-\cos(\alpha+\beta-\delta),\qquad
  E=\cos\gamma-\cos(\alpha-\beta-\delta),\label{eq_Bricard_coef}\\
  C=-2\sin\beta\sin\delta.\nonumber
 \end{gather}
\end{propos}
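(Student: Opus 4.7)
The plan is to realize the link of $\bu$ as a spherical quadrilateral $\bc_1\bc_2\bc_3\bc_4$ on $\bS^2\subset\E^3$ with $\bc_i=\bv_i'$, and to extract the biquadratic directly from the single closing condition $\langle\bc_3,\bc_4\rangle=\cos\gamma$. I would place
\[
\bc_1=(1,0,0),\qquad \bc_2=(\cos\alpha,\sin\alpha,0),
\]
and parametrize the remaining two vertices using tangent-plus-normal frames at $\bc_1$ and $\bc_2$:
\[
\bc_4=\cos\delta\,\bc_1+\sin\delta(\cos\psi_1\,\bt_{12}+\sin\psi_1\,\bn_1),
\]
\[
\bc_3=\cos\beta\,\bc_2+\sin\beta(\cos\psi_2\,\bt_{21}+\sin\psi_2\,\bn_2),
\]
where $\bt_{12},\bt_{21}$ are the unit tangent vectors at $\bc_1,\bc_2$ pointing at the opposite endpoint of the edge $\bc_1\bc_2$, the unit vectors $\bn_1,\bn_2$ complete them to orthonormal bases in the respective tangent planes, and $\psi_i=\varphi_{\bu\bv_i}$ is the oriented dihedral angle, with the sense of $\bn_i$ fixed by the exterior-normal convention used to define $\varphi_{\bu\bv_i}$.

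Expanding the scalar product $\langle\bc_3,\bc_4\rangle$ produces a trigonometric expression whose only nonconstant terms are $\cos\psi_1$, $\cos\psi_2$, $\cos\psi_1\cos\psi_2$, and $\sin\psi_1\sin\psi_2$; crucially, no $\sin\psi_1\cos\psi_2$ or $\cos\psi_1\sin\psi_2$ term survives, because the third coordinate is the only source of a $\sin\psi_i$ and it enters with the factor $\sin\psi_{3-i}$. The half-angle substitutions $\cos\psi_i=(1-t_i^2)/(1+t_i^2)$, $\sin\psi_i=2t_i/(1+t_i^2)$, followed by multiplication by $(1+t_1^2)(1+t_2^2)$, then automatically yield a polynomial of bidegree $(2,2)$ in $t_1,t_2$ whose only cross term is $t_1t_2$, that is, of the form~\eqref{eq_Bricard}. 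The coefficient of $t_1t_2$ comes entirely from the $\sin\psi_1\sin\psi_2$ term, whose coefficient in the expansion is $\pm\sin\beta\sin\delta$, giving at once $2C=\pm 4\sin\beta\sin\delta$.

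Each of the four remaining coefficients $A$, $B$, $D$, $E$ is then of the shape
\[
\cos\gamma-\cos\alpha\cos\beta\cos\delta\pm\sin\alpha\sin\beta\cos\delta\pm\sin\alpha\cos\beta\sin\delta\pm\cos\alpha\sin\beta\sin\delta,
\]
with the four independent sign triples $(+,+,+)$, $(+,-,-)$, $(-,+,-)$, $(-,-,+)$ distributed among them. Two applications of the addition formula
\[
\cos\delta\cos(\alpha\pm\beta)\mp\sin\delta\sin(\alpha\pm\beta)=\cos(\alpha\pm\beta\pm\delta)
\]
collapse each of these expressions into $\cos\gamma-\cos(\pm\alpha\pm\beta\pm\delta)$, matching the four coefficients recorded in~\eqref{eq_Bricard_coef}.

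The only real obstacle is the bookkeeping for signs: the specific assignment of the four sign triples to $A,B,D,E$ and the sign of $C$ are governed by the rotation senses used in the parametrizations at $\bc_1$ and $\bc_2$, which are in turn dictated by the exterior-normal convention for $\varphi_{\bu\bv_i}$ introduced earlier in the section. In essence this is Bricard's classical derivation, transported verbatim to the spherical link.
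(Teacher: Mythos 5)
The paper offers no proof of Proposition~\ref{propos_Bricard}: it is attributed to Bricard with only the remark that tetrahedral angles and spherical quadrilaterals are equivalent objects. So you are supplying the missing classical derivation rather than deviating from the paper, and the skeleton of your argument is sound. With your placement of $\bc_1,\bc_2$ and your frames, one gets
\begin{align*}
\langle\bc_3,\bc_4\rangle={}&\cos\alpha\cos\beta\cos\delta
+\sin\alpha\sin\beta\cos\delta\,\cos\psi_2
+\sin\alpha\cos\beta\sin\delta\,\cos\psi_1\\
&-\cos\alpha\sin\beta\sin\delta\,\cos\psi_1\cos\psi_2
\pm\sin\beta\sin\delta\,\sin\psi_1\sin\psi_2,
\end{align*}
so your key structural claims are correct: no $\sin\psi_i\cos\psi_{3-i}$ term survives, the half-angle substitution yields a biquadratic whose only cross term is $t_1t_2$, and the four even coefficients collapse by the addition formula to $\cos\gamma-\cos(\pm\alpha\pm\beta\pm\delta)$.

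The one genuine gap is precisely the bookkeeping you defer, and it is not merely cosmetic. The coefficient of $t_1^{2a}t_2^{2b}$ is $K_0+(-1)^aK_1+(-1)^bK_2+(-1)^{a+b}K_{12}$, where $K_1=\sin\alpha\cos\beta\sin\delta$ multiplies $\cos\psi_1$ and $K_2=\sin\alpha\sin\beta\cos\delta$ multiplies $\cos\psi_2$; carrying this out gives the coefficient of $t_1^2$ equal to $\cos\gamma-\cos(\alpha-\beta+\delta)$ and that of $t_2^2$ equal to $\cos\gamma-\cos(\alpha+\beta-\delta)$, i.e.\ the displayed statement with $B$ and $D$ interchanged. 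A convention-independent check: specializing the expansion above to $\psi_1=\pi$ gives $\cos\gamma=\cos\beta\cos(\alpha+\delta)+\sin\beta\sin(\alpha+\delta)\cos\psi_2$, whence
\[
t_2^2=\frac{1-\cos\psi_2}{1+\cos\psi_2}=\frac{\cos(\alpha-\beta+\delta)-\cos\gamma}{\cos\gamma-\cos(\alpha+\beta+\delta)}=-\frac{D}{A},
\]
while letting $t_1\to\infty$ in~\eqref{eq_Bricard} as printed forces $t_2^2=-B/A$. So your assertion that the expansion ``matches the four coefficients recorded in~\eqref{eq_Bricard_coef}'' is exactly the part you did not verify, and under the paper's own labeling (where $t_1$ sits at the link vertex between the sides of lengths $\delta$ and $\alpha$, and $t_2$ between those of lengths $\alpha$ and $\beta$) it is false as stated. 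This is evidently a harmless slip in the source --- everything downstream in Section~\ref{section_Bricard} is invariant under the simultaneous swap $t_1\leftrightarrow t_2$, $\beta\leftrightarrow\delta$ --- but a complete proof must either record the corrected assignment or state the relabeling under which~\eqref{eq_Bricard_coef} holds. By contrast, your deferral of the sign of $C$ is acceptable: it genuinely depends only on the orientation of the normals $\bn_1,\bn_2$, i.e.\ on the exterior-normal convention, and does not affect $A,B,D,E$.
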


\begin{remark}
 As we have already said, each variables~$t_i$ should be considered as an affine coordinate on~$\RP^1$. Therefore, it is more correct to introduce the projective coordinates $t_i=(X_i:Y_i)$ on~$\RP^1$ and write relation~\eqref{eq_Bricard} in the form
 \begin{equation}
  AX_1^2X_2^2+BX_1^2Y_2^2+2CX_1Y_1X_2Y_2+DY_1^2X_2^2+EY_1^2Y_2^2=0.\label{eq_Bricard_proj}
 \end{equation}
 With this agreement, the converse of Proposition~\ref{propos_Bricard} is also true.
\end{remark}

\begin{propos}\label{propos_Bricard_converse}
 Suppose that $\alpha,\beta,\gamma,\delta\in(0,\pi)$. Let $(t_1,t_2)\in\RP^1\times\RP^1$ be an arbitrary solution of the equation~\eqref{eq_Bricard_proj} with coefficients~\eqref{eq_Bricard_coef}. For $i=1,2$, let $\varphi_i\in \R/2\pi\Z$ be the angle such that\/ $\tan(\varphi_i/2)=t_i$.  Then there exists a spherical quadrilateral~$\bv_1'\bv_2'\bv_3'\bv_4'$ in~$\bS^2$ with consecutive edge lengths~$\alpha$, $\beta$, $\gamma$, and~$\delta$ and the oriented angles at~$\bv_1'$ and~$\bv_2'$ equal to~$\varphi_1$ and~$\varphi_2$, respectively.
\end{propos}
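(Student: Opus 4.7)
My plan is to build the desired spherical quadrilateral directly from the data $(t_1,t_2)$ and then show that the closure condition for its fourth side coincides with the biquadratic relation~\eqref{eq_Bricard_proj}. Fix a point $\bv_1'\in\bS^2$ and a unit tangent vector $\bx_0$ at $\bv_1'$, and for $t_i\in\R\cup\{\infty\}$ let $\varphi_i\in\R/2\pi\Z$ satisfy $\tan(\varphi_i/2)=t_i$. Using the exponential map, I set $\bv_2':=\exp_{\bv_1'}(\alpha\bx_0)$ and $\bv_4':=\exp_{\bv_1'}(\delta\bx_1)$, where $\bx_1\in T_{\bv_1'}\bS^2$ is obtained from $\bx_0$ by rotation through $-\varphi_1$; then $\by_1\in T_{\bv_2'}\bS^2$ is obtained from the unit tangent to $\bv_2'\bv_1'$ at $\bv_2'$ by rotation through $-\varphi_2$, and $\bv_3':=\exp_{\bv_2'}(\beta\by_1)$. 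By construction, three of the four sides have the prescribed lengths $\alpha,\beta,\delta$, and the oriented angles at $\bv_1'$ and $\bv_2'$ are exactly $\varphi_1$ and $\varphi_2$; it remains only to check that $\dist(\bv_3',\bv_4')=\gamma$.

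The key algebraic observation is that $\cos\gamma$ enters \eqref{eq_Bricard_coef} uniformly through $A,B,D,E$ and not at all through $C$. Grouping these contributions rewrites \eqref{eq_Bricard_proj} as
\[
\cos\gamma\cdot(1+t_1^2)(1+t_2^2)=R(t_1,t_2),
\]
where
\[
R(t_1,t_2)=\cos(\alpha+\beta+\delta)\,t_1^2t_2^2+\cos(\alpha+\beta-\delta)\,t_1^2+4\sin\beta\sin\delta\,t_1t_2+\cos(\alpha-\beta+\delta)\,t_2^2+\cos(\alpha-\beta-\delta)
\]
is a bidegree-$(2,2)$ polynomial depending only on $\alpha,\beta,\delta$. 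On the geometric side, set $G(t_1,t_2):=\langle\bv_3',\bv_4'\rangle\cdot(1+t_1^2)(1+t_2^2)$. Because $\bv_4'$ depends on $\varphi_1$ and $\bv_3'$ on $\varphi_2$ each as a trigonometric polynomial of degree at most one (a single exponential-map step contributes only one pair of $\cos,\sin$), the half-angle substitutions $\cos\varphi_i=(1-t_i^2)/(1+t_i^2)$, $\sin\varphi_i=2t_i/(1+t_i^2)$ turn $G$ into a polynomial of bidegree at most $(2,2)$ in $(t_1,t_2)$ that is manifestly independent of $\gamma$.

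The crux is then the polynomial identity $G\equiv R$. Whenever $\bv_3'\ne\pm\bv_4'$, set $\gamma':=\dist(\bv_3',\bv_4')\in(0,\pi)$: the quadruple $\bv_1'\bv_2'\bv_3'\bv_4'$ is an honest spherical quadrilateral with sides $\alpha,\beta,\gamma',\delta$ and oriented angles $\varphi_1,\varphi_2$ at $\bv_1',\bv_2'$, so Proposition~\ref{propos_Bricard} applied with $\gamma$ replaced by $\gamma'$ yields $\cos\gamma'\cdot(1+t_1^2)(1+t_2^2)=R(t_1,t_2)$, i.e.\ $G(t_1,t_2)=R(t_1,t_2)$. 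This equality holds on the dense open subset $\{\bv_3'\ne\pm\bv_4'\}$ of the $(t_1,t_2)$-plane, hence as a polynomial identity. Consequently, any $(t_1,t_2)$ satisfying \eqref{eq_Bricard_proj} gives $G(t_1,t_2)=\cos\gamma\cdot(1+t_1^2)(1+t_2^2)$, so $\dist(\bv_3',\bv_4')=\gamma$, and $\bv_1'\bv_2'\bv_3'\bv_4'$ is the required quadrilateral. I expect the main obstacle to be precisely the spotting of this algebraic reformulation: without noticing that $\gamma$ enters both sides only through the factor $\cos\gamma\cdot(1+t_1^2)(1+t_2^2)$, one would be forced either to compute $G$ by hand through a double application of the spherical law of cosines and match coefficients against \eqref{eq_Bricard_coef}, or to argue by a Bezout-type count for curves of bidegree $(2,2)$ in $\RP^1\times\RP^1$ (which is subtle because the biquadratic curve may be reducible) — both noticeably more cumbersome than the ``variation of $\gamma$'' trick above.
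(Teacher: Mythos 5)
The paper states Proposition~\ref{propos_Bricard_converse} without proof: it is presented as the classical converse of Proposition~\ref{propos_Bricard} (essentially due to Bricard) once the projective convention for the~$t_i$ is adopted, so there is no in-paper argument to compare yours against. Your proof is correct and is a clean way to supply the missing argument: you build $\bv_1',\bv_2',\bv_3',\bv_4'$ from the three side lengths $\alpha,\beta,\delta$ and the two angles $\varphi_1,\varphi_2$, and reduce the closure condition $\dist(\bv_3',\bv_4')=\gamma$ to the polynomial identity $G\equiv R$, which you deduce from the direct Proposition~\ref{propos_Bricard} on the locus $\bv_3'\ne\pm\bv_4'$ and then continue; this avoids any explicit double application of the spherical law of cosines. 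Three small points to tighten: (i) the density of $\{\bv_3'\ne\pm\bv_4'\}$ deserves a sentence --- for all but finitely many $\varphi_1$ one has $\bv_4'\ne\pm\bv_2'$, and then $\langle\bv_3',\bv_4'\rangle$ is nonconstant in $\varphi_2$, so the locus $\langle\bv_3',\bv_4'\rangle=\pm1$ is a proper algebraic subset; (ii) the statement allows $t_i=\infty$, i.e.\ $\varphi_i=\pi$, which your identity covers only after homogenizing in $(X_i:Y_i)$, or, more simply, by observing that the underlying identity is one of trigonometric polynomials in $(\varphi_1,\varphi_2)$ valid for all angles; (iii) the sign of the rotation defining your oriented angles must match the convention implicit in Proposition~\ref{propos_Bricard}, but even if it is off, equation~\eqref{eq_Bricard_proj} is invariant under $(t_1,t_2)\mapsto(-t_1,-t_2)$, so the mirror-image quadrilateral realizes the required oriented angles. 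None of these affects the validity of the argument.
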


\begin{remark}
By Lemma~4.1 in~\cite{Izm17} numbers $\alpha,\beta,\gamma,\delta\in (0,\pi)$ can be realized as consecutive side lengths of a non-degenerate (i.\,e. not contained in a great circle) spherical quadrilateral if and only if they satisfy the inequalities $\alpha<\beta+\gamma+\delta<\alpha+2\pi$, as well as all inequalities obtained from them by permutations of the side lengths~$\alpha$, $\beta$, $\gamma$, and~$\delta$. If degenerate quadrilaterals are allowed, then these inequalities should be made non-strict. Nevertheless, we do not need to add such inequalities to the conditions of Proposition~\ref{propos_Bricard_converse}. The matter is that if at least one of these inequalities is not fulfilled, then the system of equations~\eqref{eq_Bricard_proj} has no solutions.
\end{remark}

Now, we can construct the configuration space of octahedra with the prescribed set of edge lengths~$\bell$ as follows. Choose a face of the octahedron, say~$\ba_1\ba_2\ba_3$, and let $t_1=t_{\ba_2\ba_3}$, $t_2=t_{\ba_3\ba_1}$, and $t_3=t_{\ba_1\ba_2}$ be the tangents of the halves of the oriented dihedral angles at edges of this face. Then each of the pairs~$(t_1,t_2)$, $(t_2,t_3)$, and~$(t_3,t_1)$ satisfies a biquadratic equation of the form~\eqref{eq_Bricard_proj}. Note that the angles of faces of the octahedron and hence the coefficients of these three equations are solely determined by the set~$\bell$ of edge lengths of the octahedron. We denote by~$\Sigma_{\ba_1\ba_2\ba_3}(\bell)$ the algebraic variety in~$(\RP^1)^3$ given by these three equations. This variety will be called the \textit{configuration space} of spherical octahedra with the prescribed set of edge lengths~$\bell$ \textit{with respect to the face}~$\ba_1\ba_2\ba_3$. Propositions~\ref{propos_Bricard} and~\ref{propos_Bricard_converse} immediately imply the following proposition, which is also essentially due to Bricard~\cite{Bri97}, cf.~\cite{Gai14c} for a high-dimensional analogue. (Bricard studied only flexible octahedra in~$\E^3$. Nevertheless, the spherical case is completely similar.)

\begin{propos}\label{propos_conf_just}
 Up to rotations of~$\bS^3$, spherical octahedra with the prescribed set of edge lengths~$\bell$ are in one-to-one correspondence with points of the variety~$\Sigma_{\ba_1\ba_2\ba_3}(\bell)$.
\end{propos}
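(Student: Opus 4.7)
The statement asks for a bijection $[P]\leftrightarrow (t_1,t_2,t_3)$ between rotation-equivalence classes of octahedra with edge lengths~$\bell$ and the points of $\Sigma_{\ba_1\ba_2\ba_3}(\bell)\subset(\RP^1)^3$. In the forward direction, given an octahedron~$P$, I would apply Proposition~\ref{propos_Bricard} three times, once at each of the vertices $\ba_1, \ba_2, \ba_3$: the biquadratic relation provided at the link of~$\ba_i$ involves precisely those two of $t_1, t_2, t_3$ that correspond to edges of the face~$\ba_1\ba_2\ba_3$ meeting at $\ba_i$, and its coefficients depend only on face angles at $\ba_i$, which in turn depend only on $\bell$. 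Thus $(t_1,t_2,t_3)\in\Sigma_{\ba_1\ba_2\ba_3}(\bell)$, and the forward map is manifestly invariant under rotations of~$\bS^3$, hence well-defined on equivalence classes.

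For injectivity, suppose $P$ and $P'$ share $\bell$ and $(t_1,t_2,t_3)$. Their oriented top faces $\ba_1\ba_2\ba_3$ and $\ba_1'\ba_2'\ba_3'$ are congruent ordered triangles in $\bS^3$ (rotations act transitively on ordered triples of points with prescribed pairwise distances), so after applying one rotation we may assume they coincide. For each~$i$, the vertex~$\bb_i$ lies in the face $\ba_j\ba_k\bb_i$ (where $\{i,j,k\}=\{1,2,3\}$), and is uniquely determined by the three prescribed edge lengths of this face together with the prescribed oriented dihedral angle across its shared edge $\ba_j\ba_k$ with the top face. Hence $P=P'$.

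The surjectivity step is the substantive one. Given $(t_1,t_2,t_3)\in\Sigma_{\ba_1\ba_2\ba_3}(\bell)$, I would fix any realization of~$\ba_1\ba_2\ba_3$ in $\bS^3$ with the prescribed edge lengths and, for each~$i$, construct $\bb_i$ by folding the face $\ba_j\ba_k\bb_i$ across $\ba_j\ba_k$ at dihedral angle $\varphi_i$ (with $\tan(\varphi_i/2)=t_i$). The three top-face edges and the six edges $\ba_i\bb_j$ (with $i\ne j$) automatically acquire the lengths prescribed by~$\bell$. What is not automatic, and is the main obstacle, is that the three remaining edges $\bb_1\bb_2$, $\bb_2\bb_3$, $\bb_3\bb_1$ also agree with~$\bell$.

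To overcome this, I would analyze the link of~$\ba_3$ in the constructed object (and symmetrically for $\ba_1, \ba_2$). This link is a spherical quadrilateral in $\bS^2_{\ba_3}$ whose four side lengths are the face angles at $\ba_3$ of the four faces meeting there; three of these sides are determined by~$\bell$ via the faces $\ba_1\ba_2\ba_3$, $\ba_2\ba_3\bb_1$, $\ba_3\bb_2\ba_1$ that have been built with the correct edge lengths, while the fourth side equals $\angle\bb_1\ba_3\bb_2$ and encodes the constructed value of $\dist(\bb_1,\bb_2)$. By the folding construction, the oriented angles of this actual link at the vertices corresponding to the edges $\ba_3\ba_1$ and $\ba_3\ba_2$ equal $\varphi_2$ and $\varphi_1$, respectively. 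Since the biquadratic relation at the link of $\ba_3$ is satisfied by $(t_1,t_2)$, Proposition~\ref{propos_Bricard_converse}, invoked with the fourth side computed from the \emph{prescribed} $\ell_{\bb_1\bb_2}$, produces a spherical quadrilateral with the same three sides and the same two oriented angles. But a spherical quadrilateral is uniquely determined by two consecutive vertices together with the two side lengths incident to each of them and the two oriented angles at them. Thus the two quadrilaterals coincide, so their fourth sides match, which forces $\dist(\bb_1,\bb_2)=\ell_{\bb_1\bb_2}$. Running the analogous argument at $\ba_1$ and $\ba_2$ handles the remaining two diagonal edges and completes the construction.
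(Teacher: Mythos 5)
Your proof is correct and follows exactly the route the paper intends: the paper gives no written proof, stating only that the result follows ``immediately'' from Propositions~\ref{propos_Bricard} and~\ref{propos_Bricard_converse}, and your argument is the natural expansion of that derivation (forward direction and the biquadratic relations via the links of $\ba_1,\ba_2,\ba_3$; injectivity by unfolding the three faces adjacent to the distinguished one; surjectivity by the folding construction plus the link/quadrilateral-rigidity argument that pins down the remaining three edge lengths $\ell_{\bb_i\bb_j}$). No gaps.
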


Since any pair of coordinates~$(t_j,t_k)$ satisfies a nontrivial algebraic relation of the form~\eqref{eq_Bricard_proj}, we obtain the following proposition.

\begin{propos}
 Any irreducible component of the variety~$\Sigma_{\ba_1\ba_2\ba_3}(\bell)$ has dimension at most~$1$.
\end{propos}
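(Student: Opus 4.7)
The plan is to reduce the statement to a dimension count via the projection onto a single coordinate. The crucial preliminary observation is that each of the three defining biquadratic relations $F_{ij}(t_i,t_j)=0$ is a \emph{nonzero} polynomial equation on $(\RP^1)^3$. Indeed, by formula~\eqref{eq_Bricard_coef}, the coefficient $C_{ij}=-2\sin\beta\sin\delta$ is nonzero, since all face angles of the octahedron lie strictly in $(0,\pi)$ by the non-degeneracy assumption on~$\bell$.

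Next I consider the projection $\pi\colon \Sigma_{\ba_1\ba_2\ba_3}(\bell)\to\RP^1$ onto the coordinate~$t_1$. I claim that for generic $t_1^*\in\RP^1$ both of the one-variable polynomials $F_{12}(t_1^*,\cdot)$ and $F_{31}(\cdot,t_1^*)$ remain nontrivial. This is a short direct check using the projective form of~\eqref{eq_Bricard_proj}: identical vanishing of $F_{12}(t_1^*,\cdot)$ in $t_2$ forces the middle coefficient $2C_{12}X_1^*Y_1^*$ to vanish, which, since $C_{12}\ne 0$, happens only for $t_1^*\in\{0,\infty\}$, and even then only under extra conditions on $A_{12},B_{12},D_{12},E_{12}$. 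Consequently, for all but finitely many $t_1^*$ the fiber $\pi^{-1}(t_1^*)$ is cut out in $(\RP^1)^2$ (with coordinates $t_2,t_3$) by two independent nonzero equations of degree at most~$2$, and hence is finite, containing at most~$4$ points.

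Finally, for any irreducible component $Z$ of $\Sigma_{\ba_1\ba_2\ba_3}(\bell)$, the image $\pi(Z)$ is irreducible in~$\RP^1$, so its Zariski closure is either a single point $\{t_1^*\}$ or all of~$\RP^1$. In the second case, since the generic fiber of~$\pi|_Z$ is finite by the previous step, the fiber-dimension theorem yields $\dim Z=\dim\RP^1+0=1$. In the first case $Z$ lies entirely in the two-dimensional subvariety $\{t_1=t_1^*\}\cong(\RP^1)^2$, inside which it still satisfies the nonzero equation $F_{23}(t_2,t_3)=0$; hence $Z$ is contained in a curve and again $\dim Z\le 1$. The only step that requires any care is the generic nontriviality of the restrictions $F_{12}(t_1^*,\cdot)$ and $F_{31}(\cdot,t_1^*)$, but this is essentially a one-line consequence of $C_{ij}\ne 0$, so I anticipate no real obstacle.
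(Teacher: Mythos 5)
Your proof is correct and rests on the same key fact as the paper's (which is stated in a single line there): each pair $(t_i,t_j)$ satisfies a \emph{nontrivial} biquadratic relation because $C_{ij}=-2\sin\beta\sin\delta\ne 0$ for non-degenerate faces, and a standard dimension count then bounds every component by dimension~$1$. Your fiber-by-fiber argument via the projection to~$t_1$ just makes explicit the dimension count the paper leaves implicit.
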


A disadvantage of the above construction is that it depends on the choice of the face~$\ba_1\ba_2\ba_3$. To overcome this disadvantage, we give the following more symmetric construction of the configuration space. In total we have $12$ variables $$t_{\bu\bv}=(X_{\bu\bv}:Y_{\bu\bv})\in\RP^1$$ indexed by the edges of the octahedron. They satisfy the $24$ biquadratic equations of the form~\eqref{eq_Bricard_proj} indexed by the pairs of edges that lie in the same face. Let~$\Sigma(\bell)$ be the algebraic variety in~$(\RP^1)^{12}$ given by these $24$ equations. This variety will be called the \textit{configuration space} of spherical octahedra with the prescribed set of edge lengths~$\bell$.

For each face~$F$ of the octahedron, we have the natural projection $\pi_F\colon\Sigma(\bell)\to\Sigma_F(\bell)$, which is a regular map of algebraic varieties.

\begin{propos}\label{propos_birat}
 The projection $\pi_F\colon\Sigma(\bell)\to\Sigma_F(\bell)$ is a homeomorphism. Moreover, it is also a birational equivalence, i.\,e. the inverse map~$\pi_F^{-1}$ is rational.
\end{propos}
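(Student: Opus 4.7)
The plan is to construct an explicit inverse to $\pi_F$ and to check that it is both continuous and rational. Proposition~\ref{propos_conf_just} identifies $\Sigma_F(\bell)$ with the set of octahedra having edge lengths $\bell$ modulo rotations of $\bS^3$. Any such octahedron $P$ carries twelve oriented dihedral angles, giving a $12$-tuple which, by Proposition~\ref{propos_Bricard} applied at each of the six links of $P$, lies in $\Sigma(\bell)$. Sending $(t_1,t_2,t_3)$ to this $12$-tuple defines a set-theoretic map $\psi\colon\Sigma_F(\bell)\to\Sigma(\bell)$ with $\pi_F\circ\psi=\mathrm{id}$. To see that $\psi\circ\pi_F=\mathrm{id}$, I propagate around the octahedron: at the vertex $\ba_1$ the two angles $t_{\ba_1\ba_2}=t_3$ and $t_{\ba_1\ba_3}=t_2$ are consecutive in the link quadrilateral at $\ba_1$, so the quadrilateral (and with it the other two angles $t_{\ba_1\bb_2}$, $t_{\ba_1\bb_3}$) is uniquely determined; running this at $\ba_2$ and $\ba_3$ then fixes all six $t$'s of the form $t_{\ba_i\bb_j}$ with $i\ne j$, after which the same procedure at each $\bb_i$ fixes the three remaining edges $\bb_i\bb_j$.

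To make $\psi$ explicit, assume $F=\ba_1\ba_2\ba_3$ and write $\varphi_i$ for the oriented dihedral angle at the edge of $F$ opposite to $\ba_i$, so $t_i=\tan(\varphi_i/2)$. Place $\ba_1,\ba_2,\ba_3\in\bS^3\subset\E^4$ in a fixed standard position determined by $\bell$; their coordinates are then constants. For each $i$, the opposite vertex $\bb_i$ lies on the circle of points of $\bS^3$ at the prescribed distances from $\ba_j$ and $\ba_k$ (where $\{i,j,k\}=\{1,2,3\}$), and its position on this circle is governed by $\varphi_i$:
\begin{equation*}
\bb_i=\bp_i+\cos\varphi_i\cdot\bu_i+\sin\varphi_i\cdot\bv_i,
\end{equation*}
where $\bp_i$ is the foot of the perpendicular from $\bb_i$ onto the great circle $\ba_j\ba_k$ and $\bu_i,\bv_i$ span its orthogonal complement in $T_{\bp_i}\bS^3$ (all three depending only on $\bell$). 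Substituting $\cos\varphi_i=(1-t_i^2)/(1+t_i^2)$ and $\sin\varphi_i=2t_i/(1+t_i^2)$ exhibits $\bb_i$ as a rational function of the single variable~$t_i$.

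With all six vertex coordinates rational in $(t_1,t_2,t_3)$, each remaining $t_{\bu\bv}$ is rational as well: the cosine of $\varphi_{\bu\bv}$ is the inner product of the two exterior unit normals to the faces along $\bu\bv$, and its sine is given by a signed triple product of the same normals with a unit tangent to the edge; in both formulas the normalising factors are Gram determinants of fixed face-vertex triples, hence nonzero constants determined by $\bell$. Thus $\cos\varphi_{\bu\bv}$ and $\sin\varphi_{\bu\bv}$, and therefore $t_{\bu\bv}=\sin\varphi_{\bu\bv}/(1+\cos\varphi_{\bu\bv})$, are rational in $(t_1,t_2,t_3)$, proving that $\psi$ is rational. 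Continuity on all of $\Sigma_F(\bell)$ follows from the same geometric reconstruction: $P$ depends continuously on $(t_1,t_2,t_3)$, and each dihedral angle, viewed as an element of~$\RP^1$, depends continuously on~$P$. The main point requiring care is the locus where $1+\cos\varphi_{\bu\bv}=0$ (so $t_{\bu\bv}=\infty$); there one works in the homogeneous coordinates $(X_{\bu\bv}:Y_{\bu\bv})$ on~$\RP^1$ and uses the equivalent expression $\tan(\varphi/2)=(1-\cos\varphi)/\sin\varphi$, the non-degeneracy assumption on the faces of~$P$ guaranteeing that at every point of $\Sigma_F(\bell)$ at least one of the two rational expressions is defined.
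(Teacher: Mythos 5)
Your treatment of the rationality of $\pi_F^{-1}$ coincides with the paper's own (alternative) argument: fix the positions of $\ba_1$, $\ba_2$, $\ba_3$, note that each $\bb_i$ depends rationally on the single variable $t_i$ (this is formula~(2.5) of~\cite{Gai14c}), and then express each $t_{\bu\bv}$ rationally through the vertex coordinates (this is \cite[Lemma~9.2]{Gai15a}); your explicit versions of both steps, including the remark that the normalizing factors are Gram determinants depending only on~$\bell$ and the passage to homogeneous coordinates where $t_{\bu\bv}=\infty$, are fine. For the continuity of $\pi_F^{-1}$ you could also simply invoke compactness: $\Sigma(\bell)$ and $\Sigma_F(\bell)$ are closed subsets of products of copies of~$\RP^1$, so a continuous bijection between them is automatically a homeomorphism.

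The one genuine soft spot is the step $\psi\circ\pi_F=\mathrm{id}$, i.e.\ the injectivity of~$\pi_F$. A point of $\Sigma(\bell)$ is merely a $12$-tuple satisfying the $24$ biquadratic equations; it is not handed to you as the angle tuple of an actual octahedron. The fact that the link quadrilateral at $\ba_1$ is uniquely determined by its side lengths and the two consecutive angles $t_3=t_{\ba_1\ba_2}$ and $t_2=t_{\ba_1\ba_3}$ does not by itself force the coordinates $t_{\ba_1\bb_2}$ and $t_{\ba_1\bb_3}$ of the given point to equal the remaining two angles of that quadrilateral. Indeed, $t_{\ba_1\bb_3}$ enters only the equations pairing it with $t_{\ba_1\ba_2}$ and with $t_{\ba_1\bb_2}$ (the edges $\ba_1\ba_3$ and $\ba_1\bb_3$ lie in no common face, so there is no equation between $t_2$ and $t_{\ba_1\bb_3}$), and each of these equations is quadratic in the unknown coordinate; you must still rule out the spurious roots, e.g.\ by checking that the ``wrong'' choices for $t_{\ba_1\bb_2}$ and $t_{\ba_1\bb_3}$ cannot simultaneously satisfy the equation coming from the face $\ba_1\bb_2\bb_3$, or by bringing in the equations at the links of the other vertices. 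The paper derives bijectivity from the identification of the configuration spaces with the set of octahedra modulo rotation (Proposition~\ref{propos_conf_just}) and is itself brief on this point; but since your write-up presents the propagation around the links as a complete argument, the exclusion of the spurious roots is a step you still owe.
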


\begin{proof}
We may assume that $F=\ba_1\ba_2\ba_3$. The fact that $\pi_{\ba_1\ba_2\ba_3}$ is a homeomorphism follows immediately from Proposition~\ref{propos_conf_just}. To prove that the map~$\pi_{\ba_1\ba_2\ba_3}^{-1}$ is rational, we need to show that each variable~$t_{\bu\bv}$ is a rational function of~$t_1=t_{\ba_2\ba_3}$, $t_2=t_{\ba_3\ba_1}$, and~$t_3=t_{\ba_1\ba_2}$. This fact follows immediately from the same assertion for a spherical quadrilateral, which was proved by Izmestiev, see Proposition~2.7 and Section~7 in~\cite{Izm15}. Alternatively, one can show that $t_{\bu\bv}$ is a rational function of~$t_1$, $t_2$, and~$t_3$ by combining the following two facts:

 (1) If we fix the positions of the vertices~$\ba_1$, $\ba_2$, and~$\ba_3$, then the coordinates of the remaining three vertices~$\bb_1$, $\bb_2$, and~$\bb_3$ will become rational functions of~$t_1$, $t_2$, and~$t_3$, see~\cite[formula~(2.5)]{Gai14c}.

 (2) The function $e^{i\varphi_{\bu\bv}}$ and hence the function
 $$
 t_{\bu\bv}=\tan(\varphi_{\bu\bv}/2)=-i\cdot\frac{e^{i\varphi_{\bu\bv}}-1}{e^{i\varphi_{\bu\bv}}+1}
 $$
 on the configuration space~$\Sigma(\bell)$ are rational functions of the coordinates of vertices of the octahedron. The proof of this fact for octahedra in the Lobachevsky space can be found in~\cite[Lemma~9.2]{Gai15a}; the spherical case is completely similar.
\end{proof}

\begin{cor}
 Any irreducible component of the variety~$\Sigma(\bell)$ has dimension at most~$1$.
\end{cor}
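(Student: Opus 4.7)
The plan is to derive the corollary as an immediate consequence of the two propositions that precede it. First I would fix any face of the octahedron, say $F=\ba_1\ba_2\ba_3$, and consider the projection $\pi_F\colon\Sigma(\bell)\to\Sigma_F(\bell)$ onto the three coordinates of $(\RP^1)^{12}$ indexed by the edges of $F$. By Proposition~\ref{propos_birat}, this projection is simultaneously a homeomorphism of real varieties and a birational equivalence, with a rational inverse.

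Next I would combine this with the preceding proposition, which bounds the dimension of every irreducible component of $\Sigma_F(\bell)$ by~$1$. Since a birational equivalence restricts to an isomorphism on suitable Zariski-open dense subsets, it induces a dimension-preserving bijection between the irreducible components of the two varieties. Consequently, every irreducible component of $\Sigma(\bell)$ corresponds to exactly one irreducible component of $\Sigma_F(\bell)$ of the same dimension, and the bound~$\leq 1$ transfers from $\Sigma_F(\bell)$ to $\Sigma(\bell)$.

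I do not expect any genuine obstacle, since all the substantive content has been packaged into Proposition~\ref{propos_birat}; the corollary is essentially a one-line reduction. The only point worth a brief comment is that the notion of dimension is unambiguous here: the birational identification ensures that the algebraic (Krull) dimensions of the complex varieties agree, while the homeomorphism on real loci ensures that the topological dimensions of the real semi-algebraic sets agree as well, so the conclusion holds in whichever sense one prefers.
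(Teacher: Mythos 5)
Your proposal is correct and is essentially the argument the paper intends: the corollary is stated as an immediate consequence of Proposition~\ref{propos_birat} together with the preceding bound on the components of $\Sigma_F(\bell)$, with no further proof given. The birational identification of components (equivalently, the generic finiteness of $\pi_F$ on each component) is exactly the mechanism that transfers the dimension bound, so there is nothing to add.
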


From Proposition~\ref{propos_birat} it follows that in all matters concerning the configuration space $\Sigma(\bell)$ as a set of points (or as a topological space), or the decomposition of the variety~$\Sigma(\bell)$ into irreducible components, or the fields of rational functions of irreducible components of~$\Sigma(\bell)$, we can identify~$\Sigma(\bell)$ via the projection~$\pi_F$ with any of the varieties~$\Sigma_F(\bell)$. However, in everything concerning smoothness properties, we must carefully distinguish between the varieties~$\Sigma(\bell)$ and~$\Sigma_F(\bell)$. Indeed, for any one-dimensional irreducible component~$\Gamma$ of the variety~$\Sigma(\bell)$ and the corresponding one-dimensional irreducible component~$\Gamma_F$ of the variety~$\Sigma_F(\bell)$, the projection $\Gamma\to\Gamma_F$ is a partial normalization of the curve~$\Gamma_F$; therefore, a smooth point of~$\Gamma$ can be projected to a singular point of~$\Gamma_F$. The word `partial' means that the curve $\Gamma$ is not necessarily smooth in general, i.\,e., the projection~$\pi_F$ resolves only some (but not all) singularities of the curve~$\Gamma_F$. In fact, later in the case of interest to us concerning exotic flexible octahedra, we will prove the smoothness of the curve~$\Gamma$, but only at its real points (see Proposition~\ref{propos_conf_exotic}).

\begin{defin}
 A one-dimensional irreducible component~$\Gamma$ of~$\Sigma(\bell)$ is said to be \textit{essential} if the corresponding flexion of the octahedron is essential. In other words, $\Gamma$ is essential if and only if any of the variables~$t_{\bu\bv}$ is neither identically zero nor indentically~$\infty$ on~$\Gamma$.
\end{defin}

Suppose that $\Gamma$ is an essential one-dimensional irreducible component of~$\Sigma(\bell)$. From  Proposition~\ref{propos_birat} it follows that the field of rational functions~$\R(\Gamma)$ is generated over~$\R$ by~$t_1$, $t_2$, and~$t_3$. Now we would like to consider all possibilities how the minimal polynomial relation between~$t_1$ and~$t_2$ on~$\Gamma$ can look like. For this purpose we need to study the decomposition of the relation~\eqref{eq_Bricard} into irreducible factors. Note that, since any $t_j$ is neither identically zero nor indentically~$\infty$ on~$\Gamma$, we may study relation~\eqref{eq_Bricard} rather than~\eqref{eq_Bricard_proj}. Moreover, we can neglect factors of the form~$t_j$. Bricard's classification of possible factorizations of relation~\eqref{eq_Bricard} into irreducible factors, up to factors of the form~$t_j$, is as follows, see~\cite{Bri97} (in terminology we follow~\cite[Sect.~2.4]{Izm17}):
\smallskip

\textsl{Case 1.} A spherical quadrilateral is called an \textit{isogram} if pairs of opposite sides have equal lengths, i.\,e. $\alpha=\gamma$ and $\beta=\delta$, and an \textit{antiisogram} if lengths of opposite sides complement each other to~$\pi$, i.\,e. $\alpha+\gamma=\beta+\delta=\pi$. Then the left-hand side of relation~\eqref{eq_Bricard} is the product of two factors, either of degree~$1$ with respect to each of the variables~$t_1$ and~$t_2$, and $t_1$ and~$t_2$ are either directly or inversely proportional to each other on~$\Gamma$. Therefore, $\R(t_1)=\R(t_2)$.
\smallskip

\textsl{Case 2.} A spherical quadrilateral is called a \textit{deltoid} if it has two pairs of adjacent equal sides, say $\alpha=\delta$ and $\beta=\gamma$, and an \textit{antideltoid} if it has two pairs of adjacent sides complementing each other to~$\pi$, say $\alpha+\delta=\beta+\gamma=\pi$. In addition, we suppose that the quadrilateral is neither an isogram nor an antiisogram, that is $\alpha\ne \gamma$ and $\alpha\ne\pi-\gamma$. Then, possibly after factoring out~$t_1$, the relation~\eqref{eq_Bricard} becomes irreducible of degree~$1$ with respect to~$t_1$ and of degree~$2$ with respect to~$t_2$. Hence, on~$\Gamma$ we have that $t_1$ is a rational function of~$t_2$, but $t_2$ is not a rational function of~$t_1$. Therefore, $\R(t_1)\subset\R(t_2)$ and $|\R(t_2)/\R(t_1)|=2$.

\textsl{Case 3.} Suppose that the quadrilateral is neither an isogram, nor an antiisogram, nor a deltoid, nor an antideltoid. Then the relation~\eqref{eq_Bricard} is irreducible and the degree of either variable~$t_j$ in it is equal to~$2$. Hence, on~$\Gamma$ we have that $$|\R(t_1,t_2)/\R(t_1)|=|\R(t_1,t_2)/\R(t_2)|=2,\qquad \R(t_1)\ne\R(t_2).$$ Usually this case is divided into two subcases, which are called \textit{rational} and \textit{elliptic} depending of whether the curve~$\Gamma$ is rational or elliptic. Nevertheless, in this paper we do not need to distinguish between these two subcases.

\begin{propos}\label{propos_44}
 Suppose that an essential one-dimensional irreducible component~$\Gamma$ of\/~$\Sigma(\bell)$ satisfies condition~\textnormal{(4)} from Introduction with respect to the face~$\ba_1\ba_2\ba_3$. Then\/~$\Gamma$ satisfies the same condition~\textnormal{(4)} with respect to any other face of the octahedron.
\end{propos}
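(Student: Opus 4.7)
The plan is to extract from condition~(4) at the face $\ba_1\ba_2\ba_3$ enough combinatorial-algebraic information about the tangent maps $t_e\colon\Gamma\to\RP^1$ (for all twelve edges $e$) to recognize the same condition at every other face. By Proposition~\ref{propos_birat} the three tangents at the edges of any face generate $\R(\Gamma)$, so for each face it suffices to compute the degrees $d_e=|\R(\Gamma)/\R(t_e)|$ and verify the pairwise-inclusion pattern prescribed by~(4). The first step is to pin down the type (Case~$1$, $2$ or~$3$ in the vertex-link classification recalled before the proposition) at each of the six vertices. Condition~(4) places the pair $(t_3,t_1)$ at $\ba_2$ and the pair $(t_3,t_2)$ at $\ba_1$ into Case~$2$, so the links of $\ba_1$ and $\ba_2$ are (anti)deltoids; the pair $(t_1,t_2)$ at $\ba_3$ lies in Case~$3$, so that link is generic. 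A direct examination of the Case~$2$ factorization of~\eqref{eq_Bricard} shows that the four tangents around a Case~$2$ link have degrees alternating $2d,d,2d,d$, whereas in Cases~$1$ and~$3$ all four incident tangents have equal degree. Starting from $d_{\ba_1\ba_2}=4$ and $d_{\ba_2\ba_3}=d_{\ba_3\ba_1}=2$ and propagating through the links of $\ba_1,\ba_2,\ba_3$, and then through those of $\bb_1,\bb_2,\bb_3$, one obtains $d_e=4$ exactly on the four edges $\ba_1\ba_2$, $\ba_1\bb_2$, $\ba_2\bb_1$, $\bb_1\bb_2$, and $d_e=2$ on the remaining eight; the links of $\bb_1$ and $\bb_2$ are then forced to be Case~$2$, since only Case~$2$ admits the observed mixture of degree-$2$ and degree-$4$ tangents.

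A direct enumeration then shows that each of the eight faces contains exactly one degree-$4$ edge $e$ and two degree-$2$ edges $e',e''$. The Case~$2$ links at the two endpoints of $e$ yield $\R(t_e)\subsetneq\R(t_{e'})$ and $\R(t_e)\subsetneq\R(t_{e''})$, each a quadratic (hence automatically Galois) extension. If the link at the opposite vertex of the face is in Case~$3$, it produces $\R(t_{e'})\ne\R(t_{e''})$; the compositum of the two quadratic extensions of $\R(t_e)$ is then $\R(\Gamma)$, and by the $\Z/2\times\Z/2$ Galois correspondence for $\R(\Gamma)/\R(t_e)$ the intersection equals $\R(t_e)$. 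This is precisely condition~(4) at the face in question.

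The main obstacle is ruling out Case~$1$ at the opposite vertex of each face. At $\ba_3$ it is immediate, since $\R(t_1)\ne\R(t_2)$ by hypothesis. At $\bb_3$, where all four incident tangents have degree~$2$, Case~$1$ is a priori possible and must be excluded separately: in Case~$1$ all four tangents at $\bb_3$ would generate a common degree-$2$ subfield $K$; the Case~$2$ biquadratic at $\ba_1$ via the face $\ba_1\ba_2\bb_3$ then forces $t_3\in K$, so $K$ is one of the three intermediate subfields of $\R(\Gamma)/\R(t_3)$, while the Case~$2$ biquadratic at $\ba_1$ via $\ba_1\bb_2\bb_3$ forces $t_{\ba_1\bb_2}\in K$. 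Since $d_{\ba_1\bb_2}=4$, the subfield $\R(t_{\ba_1\bb_2})$ would be an index-$2$ subfield of $K$, hence an index-$4$ subfield of $\R(\Gamma)$ contained in a single intermediate quadratic extension; running the analogous arguments at $\bb_1$ and $\bb_2$ produces more such index-$4$ subfields than are permitted by the $\Z/2\times\Z/2$ Galois structure of $\R(\Gamma)/\R(t_3)$ already established in Step~$1$, the desired contradiction. The opposite-vertex cases for the remaining faces are handled by analogous arguments.
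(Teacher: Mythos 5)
Your strategy is genuinely different from the paper's. The paper only needs to propagate the degree-$4$ property $|\R(\Gamma)/\R(t_{\bu\bv})|=4$ from $\ba_1\ba_2$ to the other three ``equatorial'' edges via the (anti)deltoid links, because it then invokes the exhaustiveness of the classification (1a)--(4) from~\cite{Gai14c} for \emph{every} choice of distinguished face: cases (1a)--(3) admit no tangent of degree~$4$, so a face containing a degree-$4$ edge is automatically of type~(4). You avoid re-invoking that classification and instead verify condition~(4) at each face from scratch, using only the link trichotomy and Proposition~\ref{propos_birat}. Your degree propagation (degree $4$ exactly on $\ba_1\ba_2$, $\ba_1\bb_2$, $\bb_1\ba_2$, $\bb_1\bb_2$; links of $\ba_1,\ba_2,\bb_1,\bb_2$ of deltoid type with the index-$2$ inclusions pointing toward the $\ba_3$- and $\bb_3$-directions; link of $\ba_3$ generic) is correct, and so is the Galois-theoretic verification of~(4) for the four faces containing~$\ba_3$. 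This buys independence from the classification theorem at the price of having to exclude the isogram case at~$\bb_3$ by hand.

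That exclusion is where your proof has a genuine gap. You conclude that all four degree-$4$ tangents generate index-$2$ subfields of the common field $K=\R(t_{\ba_1\bb_3})=\dots=\R(t_{\bb_2\bb_3})$, and then claim this produces ``more index-$4$ subfields than are permitted by the $\Z/2\times\Z/2$ Galois structure of $\R(\Gamma)/\R(t_{\ba_1\ba_2})$.'' That structure only constrains subfields \emph{containing} $\R(t_{\ba_1\ba_2})$; you have not shown that $\R(t_{\ba_1\bb_2})$, $\R(t_{\bb_1\ba_2})$, $\R(t_{\bb_1\bb_2})$ contain it, and in any case a function field $K$ of a curve may have infinitely many index-$2$ subfields, so no counting contradiction follows. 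Fortunately the contradiction is immediate from what you have already established: if the link of $\bb_3$ were an (anti)isogram, then $\R(t_{\ba_1\bb_3})=\R(t_{\ba_2\bb_3})=K$ with $|\R(\Gamma)/K|=2$, while the deltoid link at $\ba_1$ gives $t_{\ba_1\ba_2}\in\R(t_{\ba_1\bb_3})=K$; hence all three tangents of the face $\ba_1\ba_2\bb_3$ lie in $K\subsetneq\R(\Gamma)$, contradicting Proposition~\ref{propos_birat}, by which they generate $\R(\Gamma)$. (Alternatively, using that the two ``apex'' fields of a deltoid link coincide, one gets $K=\R(t_{\ba_1\ba_3})=\R(t_{\ba_2\ba_3})$, contradicting $\R(t_{\ba_2\ba_3})\ne\R(t_{\ba_1\ba_3})$ from condition~(4).) With this repair your argument is complete.
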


\begin{proof}
 By condition~(4) for~$\ba_1\ba_2\ba_3$ we have that
 \begin{gather}
 |\R(\Gamma)/\R(t_{\ba_1\ba_2})|=4,\label{eq_ext1}\\
 |\R(t_{\ba_1\ba_3})/\R(t_{\ba_1\ba_2})|=|\R(t_{\ba_2\ba_3})/\R(t_{\ba_1\ba_2})|=2.\label{eq_ext2}
 \end{gather}
 First, from~\eqref{eq_ext1} it follows immediately that the octahedron satisfies condition~(4) with respect to the face~$\ba_1\ba_2\bb_3$, since none of the other cases~(1a), (1b), (2a), (2b), (2c), and~(3) is possible. Second, from~\eqref{eq_ext2} and the above classification of flexible spherical quadrilaterals it follows that the link of the vertex~$\ba_1$ is either a deltoid with $\angle\ba_2\ba_1\ba_3=\angle\ba_2\ba_1\bb_3$ and $\angle\bb_2\ba_1\ba_3=\angle\bb_2\ba_1\bb_3$ or an antideltoid with $\angle\ba_2\ba_1\ba_3=\pi-\angle\ba_2\ba_1\bb_3$ and $\angle\bb_2\ba_1\ba_3=\pi-\angle\bb_2\ba_1\bb_3$. (Moreover, it is neither an isogram nor an antiisogram.) Hence, $|\R(t_{\ba_1\ba_3})/\R(t_{\ba_1\bb_2})|=2$. Therefore, $|\R(\Gamma)/\R(t_{\ba_1\bb_2})|=4$. Thus, the octahedron satisfies condition~(4) with respect to either of the faces~$\ba_1\bb_2\ba_3$ and~$\ba_1\bb_2\bb_3$. Repeating the same reasoning, we obtain that the octahedron satisfies condition~(4) with respect to each of its faces.
\end{proof}

\begin{defin}\label{defin_exotic}
 Assume that for some set of edge lengths~$\bell$, the configuration space~$\Sigma(\bell)$ contains an essential one-dimensional irreducible component $\Gamma$ that (possibly after permuting the indices $1$, $2$, $3$) satisfies condition~(4) from Introduction, i.\,e., conditions~\eqref{eq_ext1}, \eqref{eq_ext2}. Then the flex of an octahedron corresponding to passing along any connected component of the curve~$\Gamma$ will be called an \textit{exotic flexible octahedron}.
\end{defin}

From Proposition~\ref{propos_44} it follows that the property of a flexible octahedron being exotic does not depend on which of its faces is denoted by~$\ba_1\ba_2\ba_3$.

\section{Equivalence of algebraic and geometric constructions}\label{section_alg_geo}

The aim of this section is to prove that the algebraic definition of an exotic flexible octahedron from Section~\ref{section_Bricard} (cf.~\cite{Gai14c}) agrees with the geometric construction from Section~\ref{section_constr}.

Suppose that, for some $\bell$, the configuration variety~$\Sigma(\bell)$ contains an essential one-dimensional irreducible component~$\Gamma$. Recall that in~\cite{Gai14c} the author proved that $\Gamma$  satisfies exactly one of the conditions~(1a), (1b), (2a), (2b), (2c), (3), and~(4) from Introduction. The case~(4) corresponds to exotic flexible octahedra.

\begin{theorem}\label{thm_alg_geo}
The curve~$\Gamma$ satisfies condition~\textnormal{(4)} from Introduction if and only if, up to rotation of\/~$\bS^3$ and simultaneous renaming of vertices $\ba_1\leftrightarrow \ba_2$ and~$\bb_1\leftrightarrow\bb_2$, the corresponding flexion of the octahedron admits a parametrization of the form~\eqref{eq_flex} with $|p_1|\ne|p_2|$ and~$|q_1|\ne |q_2|$.
\end{theorem}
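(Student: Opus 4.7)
My plan is to prove the equivalence by treating the two directions separately, analyzing at each vertex the spherical-quadrilateral link together with its biquadratic relation from Proposition~\ref{propos_Bricard}.

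For the ``if'' direction, starting from a flexion of the form~\eqref{eq_flex}, I would compute the face angles at each of the six vertices by the spherical law of cosines applied to the explicit vertex coordinates. The positions of $\ba_1,\bb_1\in L_1$ and $\ba_2,\bb_2\in L_2$ together with $L_1\perp L_2$ and the reflective symmetry $\ba_3\leftrightarrow\bb_3$ in $L_1$ produce forced identities among the face angles at each of $\ba_1,\bb_1,\ba_2,\bb_2$: at each of these four vertices two pairs of face angles are either equal or sum to $\pi$, so the link there is a strict deltoid or antideltoid. The hypotheses $|p_1|\ne|p_2|$ and $|q_1|\ne|q_2|$ are precisely what rules out the further degeneration into an isogram or antiisogram. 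At $\ba_3,\bb_3$ the four face angles are generic functions of $p_1,p_2,q_1,q_2$ and the link is a general spherical quadrilateral. Matching these link types against the Cases~1--3 preceding Proposition~\ref{propos_44} gives the factorizations of the Bricard biquadratics at every vertex. A tower-law bookkeeping over the three edges of any chosen face $\ba_1\ba_2\ba_3$ then yields the index-2 extensions $|\R(t_i)/\R(t_3)|=2$ for $i=1,2$, the total degree $|\R(\Gamma)/\R(t_3)|=4$, and the intersection equality $\R(t_3)=\R(t_1)\cap\R(t_2)$, i.e.\ condition~(4).

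For the ``only if'' direction I would reverse the analysis, extracting a geometric normal form from condition~(4). Invoking Proposition~\ref{propos_44} and the argument in its proof, condition~(4) forces the links at four of the six vertices to be strict deltoids or antideltoids of explicitly prescribed type (with specific pairings of equal or complementary face angles), while the remaining two opposite vertices carry general links. At each of the four ``Case~2'' vertices I would translate the face-angle identities, via the spherical law of cosines in the incident triangles, into edge-length relations. A short case analysis — running over whether each link is deltoid or antideltoid and over which opposite pair of face angles is matched — shows that the only way for all these identities to hold simultaneously is that four specific edges of the octahedron have length $\pi/2$, with the two pairs of endpoints lying on two mutually polar great circles $L_1$ and $L_2$. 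The remaining pair of vertices is then forced to be reflectively symmetric in one of the two circles in order to make the generic links there compatible with the index data in~(4). A rotation of $\bS^3$ places $L_1$ and $L_2$ into the standard coordinate position of Section~\ref{section_constr}, and the renaming freedom $\ba_1\leftrightarrow\ba_2$, $\bb_1\leftrightarrow\bb_2$ accounts for the choice of which polar great circle is called $L_1$; the non-degeneracy $|p_1|\ne|p_2|$ and $|q_1|\ne|q_2|$ is exactly the strictness of the deltoid/antideltoid from the reverse analysis.

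The hard part is the reconstruction in the ``only if'' direction. Extracting the $\pi/2$ edge lengths, the orthogonality $L_1\perp L_2$, and the reflective symmetry of $\ba_3,\bb_3$ from the purely algebraic condition~(4) requires a careful bookkeeping of which of several variants of deltoid/antideltoid occurs at each vertex, and ruling out the possibility that alternative geometric configurations are compatible with the same link types. An additional subtlety, specific to the spherical setting, is the interaction with the operation of replacing a vertex by its antipode: this operation can interchange a deltoid with an antideltoid at an adjacent vertex without affecting the class of octahedra encoded by~\eqref{eq_flex}, so one must show that the a priori distinct families of normal forms obtained from the various choices of sign all collapse to the single parametric family~\eqref{eq_flex}.
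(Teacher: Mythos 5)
Your ``only if'' direction has a genuine gap at its central step: the derivation of the four edge lengths $\ell_{\ba_1\ba_2}=\ell_{\ba_1\bb_2}=\ell_{\bb_1\ba_2}=\ell_{\bb_1\bb_2}=\pi/2$. You propose to extract these from ``a short case analysis'' of which deltoid/antideltoid variant occurs at each of the four equatorial vertices. But the deltoid/antideltoid conditions at $\ba_1,\bb_1,\ba_2,\bb_2$ only assert identities between face angles of the form $\angle\ba_2\ba_1\ba_3=\pm\angle\ba_2\ba_1\bb_3\ (\mathrm{mod}\ \pi)$, i.e.\ they constrain how $\bb_3$ sits relative to $\ba_3$ as seen from each equatorial vertex; they impose no constraint whatsoever on the length of the equatorial edge $\ba_1\ba_2$ itself. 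The link types capture only the coarse field-theoretic data (which $\R(t_i)$ contains which), whereas the $\pi/2$ condition reflects finer information about when the system of three biquadratics actually possesses an irreducible component realizing the degree-$4$ extension. The paper gets this step from Lemma~9.9 of~\cite{Gai14c} (the identity $\langle\bn_1,\bn_2\rangle=\langle\bn_1,\bn_3\rangle\langle\bn_2,\bn_3\rangle$ for the inner normals of a face, which by spherical trigonometry forces a side of length $\pi/2$) --- a nontrivial consequence of the elliptic-function analysis that your bookkeeping of link types cannot reproduce. Only \emph{after} the $\pi/2$ edges are in hand can the angle identities at $\ba_1,\ba_2$ be converted (via the right-sided triangle formula) into the inner-product relations $\langle\ba_3,\bc\rangle=\pm\langle\bb_3,\bc\rangle$ that pin $\bb_3$ down as a reflection of $\ba_3$; your reconstruction of the reflective symmetry inherits the same gap.

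Your ``if'' direction is closer to workable but also rests on an unverified claim: the tower-law computation of $|\R(\Gamma)/\R(t_3)|=4$ requires $\R(t_1)\ne\R(t_2)$, equivalently that the link of $\ba_3$ is neither an isogram, antiisogram, deltoid, nor antideltoid. You assert this link is ``generic'' but this must actually be checked against the explicit face angles $\cos\angle\ba_1\ba_3\ba_2=-p_1q_1/\sqrt{(1-p_1^2)(1-q_1^2)}$, etc.\ (it is true under $|p_1|\ne|p_2|$, $|q_1|\ne|q_2|$, but needs a case analysis including the vanishing of some $p_i,q_j$). The paper avoids this entirely: after normalizing $|p_1|>|p_2|$ it observes that at $\theta=\theta_{\max}$ the tetrahedron $\ba_1\ba_2\bb_2\ba_3$ degenerates while $\ba_1\bb_1\ba_2\ba_3$ does not, so $t_2\in\{0,\infty\}$ while $t_1\notin\{0,\infty\}$, hence $t_1,t_2$ are not proportional and $\R(t_1)\ne\R(t_2)$; it then concludes $|\R(\Gamma)/\R(t_3)|\ge 4$ and invokes the exhaustiveness of the classification (1a)--(4) to land in case~(4). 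You should either adopt that degeneration argument or carry out the verification for the link of $\ba_3$ explicitly.
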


\begin{proof}
First, consider the flexion of an octahedron given by~\eqref{eq_flex}, where $|p_1|\ne|p_2|$ and~$|q_1|\ne |q_2|$. By Proposition~\ref{propos_44} it is enough to prove condition~(4) for some one face of the octahedron. Swapping $\ba_1\leftrightarrow\bb_1$, we can achieve that $|p_1|>|p_2|$. As above, we put $t_1=t_{\ba_2\ba_3}$, $t_2=t_{\ba_3\ba_1}$, and $t_3=t_{\ba_1\ba_2}$.

The spherical law of cosines for the faces~$\ba_1\ba_2\ba_3$, $\ba_1\ba_2\bb_3$, $\bb_1\ba_2\ba_3$, and~$\bb_1\ba_2\bb_3$ yields that
\begin{align*}
 \cos\angle \ba_1\ba_2\ba_3&=\cos\angle \ba_1\ba_2\bb_3=\frac{p_1}{\sqrt{1-q_1^2}}\,,\\
 \cos\angle \bb_1\ba_2\ba_3&=\cos\angle \bb_1\ba_2\bb_3=\frac{p_2}{\sqrt{1-q_1^2}}\,.
\end{align*}
So the link of the vertex~$\ba_2$ is a deltoid and neither an isogram nor an antiisogram. Hence $t_3$ is a rational function of~$t_1$ and $|\R(t_1)/\R(t_3)|=2$. Similarly, the link of~$\ba_1$ is an antideltoid and neither an isogram nor an antiisogram, and hence $t_3$ is a rational function of~$t_2$ and $|\R(t_2)/\R(t_3)|=2$.  Let us prove that $\R(t_1)\ne\R(t_2)$. Indeed, it follows from Bricard's classification of flexible spherical quadrilaterals (see Section~\ref{section_Bricard}) that the equality~$\R(t_1)=\R(t_2)$ can only take place when~$t_1$ and~$t_2$ are either directly or inversely proportional to each other during the flexion. Since $|p_1|>|p_2|$, at $\theta=\theta_{\max}=\arccos|p_1|$ we have that the four vectors $\ba_1$, $\ba_2$, $\bb_2$, $\ba_3$ are linearly dependent, but the four vectors $\ba_1$, $\bb_1$, $\ba_2$, $\ba_3$ are linearly independent. It follows that $t_2$ is either~$0$ or~$\infty$, but $t_1$ is neither~$0$ nor~$\infty$. Therefore, $t_1$ and~$t_2$ are neither directly nor inversely proportional to each other during the flexion, so $\R(t_1)\ne\R(t_2)$. Thus, we have that $|\R(\Gamma)/\R(t_1)|\ge 2$ and hence $|\R(\Gamma)/\R(t_3)|\ge 4$. Since any essential flexible octahedron satisfies one of the conditions~(1a), (1b), (2a), (2b), (2c), (3), and~(4) from Introduction, we conclude that $|\R(\Gamma)/\R(t_3)|=4$ and the flexible octahedron under consideration satisfies condition~(4).

Second, let us prove the `only if' part of the theorem. Assume that $\Gamma$ is an arbitrary essential one-dimensional irreducible component of a configuration variety~$\Sigma(\bell)$ such that $\Gamma$ satisfies condition~(4) from Introduction. Our goal is to prove that, up to rotation of~$\bS^3$ and simultaneous renaming $\ba_1\leftrightarrow\ba_2$ and~$\bb_1\leftrightarrow\bb_2$, the corresponding flexion admits a parametrization of the form~\eqref{eq_flex} with $|p_1|\ne |p_2|$ and~$|q_1|\ne |q_2|$.

The following property of exotic flexible octahedra is Lemma~9.9 in~\cite{Gai14c}:
\smallskip

\textit{Suppose that $\bn_1$, $\bn_2$, and~$\bn_3$ are the unit inner normal vectors to the sides~$\ba_2\ba_3$, $\ba_3\ba_1$, and~$\ba_1\ba_2$ of the face~$\ba_1\ba_2\ba_3$, respectively; then $$\langle\bn_1,\bn_2\rangle=\langle\bn_1,\bn_3\rangle\langle\bn_2,\bn_3\rangle.$$}
\smallskip

This property immediately implies that $\ell_{\ba_1\ba_2}=\pi/2$. Similarly, we have that $$\ell_{\ba_1\bb_2}=\ell_{\bb_1\ba_2}=\ell_{\bb_1\bb_2}=\frac{\pi}{2}.$$
Consider  the great circle~$L_1$ through~$\ba_1$ and~$\bb_1$ and  the great circle~$L_2$ through~$\ba_2$ and~$\bb_2$. Then there exist Euclidean coordinates~$x_1,x_2,x_3,x_4$ in~$\E^4\supset\bS^3$ such that $L_1$ is given by $x_3=x_4=0$ and  $L_2$ is given by $x_1=x_2=0$.

Condition~(4) implies that, if $i$ is either~$1$ or~$2$, then $t_3$ is a rational function of~$t_i$ but $t_i$ is not a rational function of~$t_3$. From Bricard's classification of flexible spherical quadrilaterals (see Section~\ref{section_Bricard}) it follows that the link of each of the vertices~$\ba_1$ and~$\ba_2$ is either a deltoid or an antideltoid, so
\begin{align*}
 \cos\angle\ba_2\ba_1\ba_3&=\nu_1\cos\angle\ba_2\ba_1\bb_3,&
 \cos\angle\bb_2\ba_1\ba_3&=\nu_1\cos\angle\bb_2\ba_1\bb_3,&
 \nu_1&=\pm 1,\\
 \cos\angle\ba_1\ba_2\ba_3&=\nu_2\cos\angle\ba_1\ba_2\bb_3,&
 \cos\angle\bb_1\ba_2\ba_3&=\nu_2\cos\angle\bb_1\ba_2\bb_3,&
 \nu_2&=\pm 1.
\end{align*}
The following assertion is a standard consequence of the spherical laws of sines and cosines:
\smallskip

\textit{Suppose that the lengths of sides of a spherical triangle are equal to~$a$, $b$, and~$\pi/2$ and the angles of the corners opposite to the sides~$a$ and~$b$ are equal to~$\alpha$ and~$\beta$, respectively. Then}
$$
\cos a = \frac{\cos\alpha\sin\beta}{\sqrt{1-\cos^2\alpha\cos^2\beta}}\,.
$$
Applying this formula to the faces of the octahedron, we obtain that
\begin{align*}
 \cos\ell_{\ba_2\ba_3}&=\nu_1\cos\ell_{\ba_2\bb_3},&
 \cos\ell_{\bb_2\ba_3}&=\nu_1\cos\ell_{\bb_2\bb_3},\\
 \cos\ell_{\ba_1\ba_3}&=\nu_2\cos\ell_{\ba_1\bb_3},&
 \cos\ell_{\bb_1\ba_3}&=\nu_2\cos\ell_{\bb_1\bb_3}.
\end{align*}

Assume that the octahedron is essential. Then $\ba_1\ne\pm\bb_1$, $\ba_2\ne\pm\bb_2$, so $\ba_1,\bb_1,\ba_2,\bb_2$ is a basis of~$\E^4$.
If $\nu_1=\nu_2=1$, then we see that $\langle\ba_3,\bc\rangle=\langle\bb_3,\bc\rangle$ for $\bc=\ba_1,\bb_1,\ba_2,\bb_2$. Hence, $\ba_3=\bb_3$. Therefore, the octahedron is inessential. Similarly, if $\nu_1=\nu_2=-1$, then we obtain that $\ba_3=-\bb_3$ and the octahedron is inessential, too.

Thus, for an essential octahedron, we have that $\nu_1=-\nu_2$. By swapping $\ba_1\leftrightarrow\ba_2$ and~$\bb_1\leftrightarrow\bb_2$, we can achieve that $\nu_1=-1$ and~$\nu_2=1$. Let $\ba_3'$ be the point symmetric to~$\ba_3$ about~$L_1$. Then $\langle\ba_3',\bc\rangle=\langle\bb_3,\bc\rangle$ for $\bc=\ba_1,\bb_1,\ba_2,\bb_2$ and hence $\bb_3=\ba_3'$. Now, we see that $\ba_1,\bb_1\in L_1$, $\ba_2,\bb_2\in L_2$, and the vertices~$\ba_3$ and~$\bb_3$ are symmetric to each other with respect to~$L_1$. In Section~\ref{section_constr} we have shown that, up to rotation of~$\bS^3$, any octahedron that satisfies these conditions admits a parametrization of the form~\eqref{eq_flex}. Moreover, if at least one of the two equalities $|p_1|=|p_2|$ and $|q_1|=|q_2|$ were true, then, up to replacing of some vertices with their antipodes, the octahedron would be plane-symmetric and hence it would fall into one of the cases~(2a), (2b), and~(2c) rather than into the case~(4). Thus, $|p_1|\ne |p_2|$ and~$|q_1|\ne|q_2|$.
\end{proof}

\section{Configuration spaces of exotic flexible octahedra}\label{section_config_exotic}

In this section we describe the configuration space~$\Sigma=\Sigma(\bell)$, where $\bell$ is the set of edge lengths of an exotic flexible octahedron. Note that renaming the vertices by $\ba_1\leftrightarrow\bb_1$ and~$\ba_2\leftrightarrow\bb_2$ we can achieve that $|p_1|<|p_2|$ and $|q_1|<|q_2|$. It will be convenient for us to adopt this specific convention, which is opposite to the convention $|p_1|>|p_2|$ used in the proof of Theorem~\ref{thm_alg_geo}. The reason is that we would like none of the dihedral angles at the edges $\ba_i\ba_j$ to become either zero or straight during the flexion. (Note that in the proof of Theorem~\ref{thm_alg_geo}, it was precisely important that the dihedral angle at the edge $\ba_1\ba_3$ becomes zero or straight.)

\begin{propos}\label{propos_conf_exotic}
 Let~$\bell=(\ell_{\bu\bv})$ be the set of edge lengths for an octahedron such that $0<\ell_{\bu\bv}<\pi$ for all edges~$\bu\bv$,
 \begin{gather}
\nonumber \ell_{\ba_1\ba_2}=\ell_{\ba_1\bb_2}=\ell_{\bb_1\ba_2}=\ell_{\bb_1\bb_2}=\frac{\pi}{2}\,,\\
\ell_{\ba_1\ba_3}=\ell_{\ba_1\bb_3},\qquad \ell_{\bb_1\ba_3}=\ell_{\bb_1\bb_3},\label{eq_lengths}\\
 \ell_{\ba_2\ba_3}+\ell_{\ba_2\bb_3}=\ell_{\bb_2\ba_3}+\ell_{\bb_2\bb_3}=\pi,
\nonumber
\end{gather}
and the cosines
\begin{align*}
 p_1&=\cos\ell_{\ba_1\ba_3},&p_2&=\cos\ell_{\bb_1\ba_3},&
 q_1&=\cos\ell_{\ba_2\ba_3},&q_2&=\cos\ell_{\bb_2\ba_3}
\end{align*}
satisfy $|p_1|<|p_2|$, $|q_1|<|q_2|$, and $p_2^2+q_2^2<1$.
Then the configuration space $\Sigma=\Sigma(\bell)$ has a unique essential one-dimensional irreducible component~$\Gamma$, which consists of two connected components~$\Gamma_+$ and~$\Gamma_-$. For either connected component~$\Gamma_{\pm}$, the corresponding flexion of the octahedron, up to rotation of~$\bS^3$, is given by formulae~\eqref{eq_flex}, where the signs~$\delta_1$ and~$\varepsilon_1$ remain unchanged during the flexion, the sign~$\delta_2$ changes when $\theta=\theta_{\max}=\arccos|p_2|$, and the sign~$\varepsilon_2$ changes when $\theta=\theta_{\min}=\arcsin|q_2|$. Thus, $\Gamma_{\pm}$  projects $4$-fold (at interior points) onto the segment~$[\theta_{\min},\theta_{\max}]$, see Fig.~\ref{fig_circle}. The component~$\Gamma_+$ corresponds to $\delta_1\varepsilon_1=1$ and the component~$\Gamma_-$ corresponds to $\delta_1\varepsilon_1=-1$. Moreover, all (real) points of\/~$\Gamma$ are smooth and all coordinates of vertices of the flexible octahedron~\eqref{eq_flex} are real analytic functions on~$\Gamma$.
\end{propos}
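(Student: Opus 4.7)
The plan is to build the essential component $\Gamma$ directly from the geometric construction of Section~\ref{section_constr} and to read its topology off the sign combinatorics of~\eqref{eq_flex}. Under the hypotheses $|p_1|<|p_2|$, $|q_1|<|q_2|$, and $p_2^2+q_2^2<1$, the Proposition at the end of Section~\ref{section_constr} guarantees that~\eqref{eq_flex} genuinely produces a flex of an octahedron with edge lengths $\bell$ for $\theta\in[\theta_{\min},\theta_{\max}]=[\arcsin|q_2|,\arccos|p_2|]$, and the forward direction of Theorem~\ref{thm_alg_geo} shows this flex satisfies condition~(4). Its Zariski closure is therefore an essential one-dimensional irreducible component of~$\Sigma(\bell)$, which I take as~$\Gamma$.

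To count sheets, I would observe that at an interior $\theta$ the $16$ sign combinations $(\delta_1,\delta_2,\varepsilon_1,\varepsilon_2)$ give $16$ octahedra, but the orientation-preserving isometry $(x_1,x_2,x_3,x_4)\mapsto(x_1,-x_2,x_3,-x_4)$ of~$\bS^3$ fixes the arc carrying~$\ba_3$ as well as the lines $L_1,L_2$ and identifies $(\delta_1,\delta_2,\varepsilon_1,\varepsilon_2)$ with $(-\delta_1,-\delta_2,-\varepsilon_1,-\varepsilon_2)$; this leaves exactly $8$ distinct points of~$\Sigma(\bell)$ over each interior~$\theta$. The product $\delta_1\varepsilon_1$ is invariant under the identification and locally constant on $\Gamma$, yielding the decomposition $\Gamma=\Gamma_+\sqcup\Gamma_-$. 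Because $|p_1|<|p_2|$, the factor $\sqrt{1-p_1^2/\cos^2\theta}$ stays strictly positive at $\theta_{\max}$ while $\sqrt{1-p_2^2/\cos^2\theta}$ vanishes, so only $\delta_2$ branches there; by symmetry, only $\varepsilon_2$ branches at $\theta_{\min}$. Following the resulting gluing of the four $(\delta_2,\varepsilon_2)$-sheets at the two endpoints produces a single $4$-cycle for each of $\Gamma_\pm$, hence a topological circle projecting $4$-to-$1$ onto $[\theta_{\min},\theta_{\max}]$.

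For smoothness and real analyticity I would replace the square-root-laden parameter $\theta$ near each endpoint by a natural angular coordinate. Writing $\bb_1=(\cos\alpha_2,\sin\alpha_2,0,0)$, the identity $p_2=\cos\alpha_2\cos\theta$ exhibits $\alpha_2$ as a local real analytic coordinate across $\theta=\theta_{\max}$ (where $\alpha_2$ passes continuously through $0$ or~$\pi$), and through this relation $\theta$ itself becomes a real analytic function of $\alpha_2$; an analogous parameter $\beta_2$ handles a neighbourhood of $\theta_{\min}$. The remaining square roots in~\eqref{eq_flex} stay bounded away from zero at the relevant endpoint thanks to the strict inequalities $|p_1|<|p_2|$ and $|q_1|<|q_2|$, so all vertex coordinates are real analytic on $\Gamma$, and $\Gamma$ is smooth at every real point.

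Finally, for uniqueness of the essential component I would combine Bricard's classification from Section~\ref{section_Bricard} with Theorem~\ref{thm_alg_geo}. By Theorem~\ref{thm_alg_geo} any type-(4) essential one-dimensional irreducible component admits the form~\eqref{eq_flex} after rotation and at most the renaming $\ba_1\leftrightarrow\ba_2,\bb_1\leftrightarrow\bb_2$; the latter renaming alters $\bell$ (it swaps $\ell_{\ba_1\ba_3}$ with $\ell_{\ba_2\ba_3}$) and is therefore unavailable for fixed $\bell$, so all $16$ sign configurations give points on $\Gamma$ and any type-(4) component coincides with $\Gamma$. Components of types~(1) and~(2) would force, possibly after antipodal replacements of vertices as discussed at the end of Section~\ref{section_constr}, edge-length identities such as $|p_1|=|p_2|$ or $|q_1|=|q_2|$ contradicting our hypotheses; skew (type~(3)) components similarly enforce coincidences incompatible with~\eqref{eq_lengths}. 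I expect this last step to be the most delicate part of the proof, since it requires tracing antipodal replacements of vertices through each non-exotic Bricard type and verifying case-by-case that the strict inequalities $|p_1|\ne|p_2|$ and $|q_1|\ne|q_2|$ are violated in each of them.
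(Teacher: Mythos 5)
Your overall picture of $\Gamma$ --- the $16$ sign configurations identified in pairs by the rotation $(x_1,x_2,x_3,x_4)\mapsto(x_1,-x_2,x_3,-x_4)$, the branching of $\delta_2$ only at $\theta_{\max}$ and of $\varepsilon_2$ only at $\theta_{\min}$, and the resulting pair of circles labelled by $\delta_1\varepsilon_1$ --- coincides with the paper's, and your local coordinate $\alpha_2$ near $\theta_{\max}$ is equivalent to the paper's parameter $\tau=\delta_2\sqrt{1-p_2^2/\cos^2\theta}=\sin\alpha_2$. But there are genuine gaps. First, irreducibility: you declare that ``the Zariski closure is therefore an essential one-dimensional irreducible component,'' but nothing you say rules out $\Gamma_+$ and $\Gamma_-$ being two \emph{distinct} irreducible components, which would contradict the statement that $\Sigma(\bell)$ has a \emph{unique} essential component consisting of two connected components. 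The paper's key idea here is a complex-analytic continuation: following $\theta$ along a loop in $\C$ around the branch point $\arccos|p_1|$ reverses the sign $\delta_1\varepsilon_1$, so $\Gamma_+$ and $\Gamma_-$ lie on the same irreducible (complex) curve. This step has no counterpart in your argument. Second, smoothness: real analyticity of the parametrization $\alpha_2\mapsto(\text{vertex coordinates})$ does not imply that the image curve $\Gamma\subset(\RP^1)^{12}$ is smooth --- the cuspidal cubic is the image of a polynomial map. The paper closes this by exhibiting a regular function on a Zariski-open set containing $\Gamma$ (namely $y_1=\langle\ba_1,\bb_1\rangle$, written as a rational function of $t_{\ba_2\ba_3}$) with nonvanishing differential, and checking that $dy_1/d\tau\neq 0$ at the branch points; that is the ingredient tying your analytic parameter to the ambient algebraic structure.

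Two further points. Your claim of ``exactly $8$ distinct points over each interior $\theta$'' needs proof: one must show the rotation above is the \emph{only} identification. The paper does this by recovering $\theta$, $\delta_2$ and $\varepsilon_1\varepsilon_2$ from the three diagonal cosines (which are functions of the tangents), and then observing that the remaining candidate identification --- the reflection $x_4\mapsto-x_4$, which flips $\varepsilon_1$ and $\varepsilon_2$ simultaneously --- reverses the sign of every $t_{\bu\bv}$ and hence moves every point of $\Gamma$. Without this, the asserted $4$-fold (rather than $2$-fold) projection and the disjointness $\Gamma_+\cap\Gamma_-=\varnothing$ are unproved. Finally, for uniqueness your plan of running through the Bricard types (1)--(3) and tracing antipodal replacements is not carried out and is harder than necessary: the paper instead shows directly (from the derivation in Section~\ref{section_constr}) that \emph{every} essential octahedron with edge lengths \eqref{eq_lengths} is, up to rotation, of the form \eqref{eq_flex}, so the essential locus of $\Sigma(\bell)$ is exactly the image of the auxiliary space $\CC$ and no case analysis of other types is needed.
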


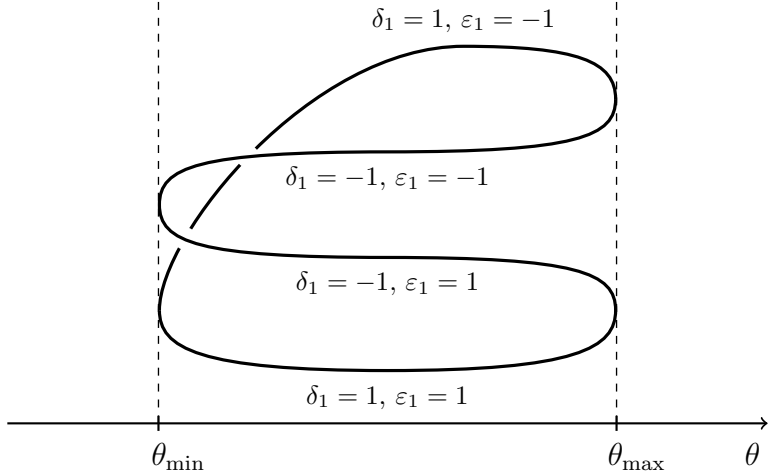
\begin{figure}
\begin{tikzpicture}
 \draw [line width=.8pt,->] (-2,0)--(8,0) node [below=3pt, pos = .225] {$\theta_{\min}$}
 node [below=3pt, pos = .828] {$\theta_{\max}$} node [below = 3pt, pos =.98] {$\theta$};
 \draw [line width=.5pt, dashed,xshift=-.35pt] (0,0) -- (0,5.6);
 \draw [line width=.5pt, dashed,xshift=.35pt] (6,0) -- (6,5.6);
 \draw [line width=.8pt,xshift=-.35pt]  (0,-.1)--(0,0.1);
 \draw [line width=.8pt,xshift=.35pt]  (6,-.1)--(6,0.1);

 \draw [line width =1.2pt] (6,4.3)
  .. controls (6,4.8) and (5.5,5) .. (4,5)
  node [above=1pt] {\footnotesize$\delta_2=1,\,\varepsilon_2=-1$}
  .. controls (2,5) and (0,2.5) .. (0,1.5);

  \fill [white,xshift=.34cm,yshift = 2.45cm] (0,0) circle (.15);
  \fill [white,xshift=1.16cm,yshift = 3.54cm] (0,0) circle (.15);
 \draw [line width =1.2pt] (0,1.5) .. controls (0,1) and (.5,.7) .. (3,.7)
  node [below=1pt] {\footnotesize$\delta_2=1,\,\varepsilon_2=1$}
  .. controls (5.5,.7) and (6,1) .. (6,1.5)
  .. controls (6,2) and (5.5,2.2) .. (3,2.2)
  node [below=1pt] {\footnotesize$\delta_2=-1,\,\varepsilon_2=1$}
  .. controls (.5,2.2) and (0,2.4) .. (0,2.9)
  .. controls (0,3.4) and (0.5,3.6) .. (3,3.6)
  node [below=1pt] {\footnotesize$\delta_2=-1,\,\varepsilon_2=-1$}
  .. controls (5.5, 3.6) and (6,3.8) .. (6, 4.3);
\end{tikzpicture}
 \caption{A connected component of the configuration space}
 \label{fig_circle}
\end{figure}

\begin{proof}
To prove this proposition, we need to establish a correspondence between the descriptions of  flexible octahedra in terms of the tangents of half dihedral angles and in terms of vertex coordinates.

When working with vertex coordinates, it is natural to consider an auxiliary configuration space~$\CC$ whose points are all possible positions of the flexible octahedron~\eqref{eq_flex} for various~$\theta$, $\delta_1$, $\delta_2$, $\varepsilon_1$, and~$\varepsilon_2$. Since the sign~$\delta_2$ is unimportant when $\theta=\theta_{\max}$ and the sign~$\varepsilon_2$ is unimportant when $\theta=\theta_{\min}$, we obtain that $\CC$ consists of the four connected components~$\CC_{\delta_1\varepsilon_1}$ corresponding to the choice of the signs~$\delta_1$ and~$\varepsilon_1$. Moreover, each of the connected components~$\CC_{\pm\pm}$ is homeomorphic to the circle and projects $4$-fold onto the segment~$[\theta_{\min},\theta_{\max}]$ as in Fig.~\ref{fig_circle}. Calculating the tangents~$t_{\bu\bv}$ of the half oriented dihedral angles for the octahedra~\eqref{eq_flex}, we obtain a map $f\colon\CC\to\Sigma$. We denote the image of~$f$ by~$\Gamma$. The rotation by~$\pi$ around the line $x_2=x_4=0$ changes simultaneously all the signs~$\delta_1$, $\delta_2$, $\varepsilon_1$, and~$\varepsilon_2$. Hence, the map~$f$ glues the components~$\CC_{++}$ and~$\CC_{--}$ together and glues the components~$\CC_{+-}$ and~$\CC_{-+}$ together.  We put $\Gamma_+=f(\CC_{++})=f(\CC_{--})$ and~$\Gamma_-=f(\CC_{+-})=f(\CC_{-+})$. To prove the proposition, we need to show that
\smallskip

(1) all points of~$\Sigma$ that correspond to essential octahedra lie in the image of~$f$,
\smallskip

(2) the restriction of~$f$ to~$\CC_{++}\cup\CC_{+-}$ is injective; in particular, $\Gamma_+\cap\Gamma_-=\varnothing$,
\smallskip

(3) $\Gamma$ is an irreducible component of~$\Sigma$,
\smallskip

(4) all real points of~$\Gamma$ are smooth and all coordinates of vertices in~\eqref{eq_flex} are real analytic functions on~$\Gamma$.
\smallskip

Assertion~(1) follows, since in Section~\ref{section_constr} we have shown that, up to rotation of~$\bS^3$, any essential octahedron with edge lengths~\eqref{eq_lengths} has the form~\eqref{eq_flex}.

To prove assertion~(2) we need to show that the parameters~$\theta$, $\delta_2$, $\varepsilon_1$, and $\varepsilon_2$ can be uniquely recovered from the tangents~$t_{\bu\bv}$ if we assume that $\delta_1=1$. We denote by~$y_1$, $y_2$, and~$y_3$ the cosines of the lengths of the diagonals~$\ba_1\bb_1$, $\ba_2\bb_2$, and~$\ba_3\bb_3$, respectively. Obviously, $y_1$, $y_2$, and~$y_3$ can be uniquely recovered from the tangents of the half dihedral angles of the octahedron. Further, we have
\begin{align}
 y_1&=\frac{p_1p_2}{\cos^2\theta}+\delta_2\sqrt{\left(1-\frac{p_1^2}{\cos^2\theta}\right)\left(1-\frac{p_2^2}{\cos^2\theta}\right)}\ ,\label{eq_diag1}\\
  y_2&=\frac{q_1q_2}{\sin^2\theta}+\varepsilon_1\varepsilon_2\sqrt{\left(1-\frac{q_1^2}{\sin^2\theta}\right)\left(1-\frac{q_2^2}{\sin^2\theta}\right)}\ ,\label{eq_diag2}\\
  y_3&=\cos^2\theta-\sin^2\theta.\label{eq_diag3}
\end{align}
Equality~\eqref{eq_diag3} allows us to recover~$\theta$. Then equality~\eqref{eq_diag1} allows us to recover~$\delta_2$ unless $\theta=\theta_{\max}$. However, if $\theta=\theta_{\max}$, then the sign~$\delta_2$ is unimportant. Similarly, equality~\eqref{eq_diag2} allows us to recover the product~$\varepsilon_1\varepsilon_2$ unless~$\theta=\theta_{\min}$, and the sign $\varepsilon_2$ is unimportant when $\theta=\theta_{\min}$. So if the images under~$f$ of two distinct points of~$\CC_{++}\cup\CC_{+-}$ coincide with each other, then these two points correspond to two rows of parameters $(\theta,\delta_2,\varepsilon_1,\varepsilon_2)$ that are obtained from each other by simultaneous reversing the signs~$\varepsilon_1$ and~$\varepsilon_2$. Finally, note that the symmetry about the plane $x_4=0$ results in reversing simultaneously the signs $\varepsilon_1$ and~$\varepsilon_2$. On the other hand, this symmetry reverses the sign of every tangent~$t_{\bu\bv}$. For each octahedron of the form~\eqref{eq_flex}, at least one of the tangents~$t_{\bu\bv}$ is neither~$0$ nor~$\infty$. Therefore, the images under~$f$ of the two points of~$\CC_{++}\cup\CC_{+-}$ corresponding to the rows of parameters $(\theta,\delta_2,\varepsilon_1,\varepsilon_2)$ and $(\theta,\delta_2,-\varepsilon_1,-\varepsilon_2)$ do not coincide with each other. Assertion~(2) follows.

Let us now prove assertion~(4). Recall that $\Gamma$ is a curve in the space~$(\RP^1)^{12}$ with coordinates~$t_{\bu\bv}$. Let $U\subset (\RP^1)^{12}$ be the Zariski open subset consisting of all points at which neither of the two coordinates $t_1=t_{\ba_2\ba_3}$ and $t_2=t_{\ba_1\ba_3}$ equals zero or infinity. It follows easily from the inequalities $|p_1|<|p_2|$ and $|q_1|<|q_2|$ that the tetrahedra~$\ba_1\bb_1\ba_2\ba_3$ and~$\ba_1\ba_2\bb_2\ba_3$ remain non-degenerate during the whole flexion~\eqref{eq_flex}. Hence, the curve~$\Gamma$ is contained in~$U$. Consider the dihedral angle between the faces~$\ba_1\ba_2\ba_3$ and~$\bb_1\ba_2\ba_3$. Applying standard formulae of spherical trigonometry, one can easily show that
$$
y_1=\cos h_1\cos h_2\cos z+\sin h_1\sin h_2\cdot\frac{1-t_1^2}{1+t_1^2}\,,
$$
where $h_1$ and $h_2$ are the lengths of the altitudes in the triangles $\ba_1\ba_2\ba_3$ and~$\bb_1\ba_2\ba_3$, respectively, dropped to their common side~$\ba_2\ba_3$, and $z$ is the distance between the feet of these altitudes. It follows that $y_1$ is a regular function on~$U$, and its differential~$dy_1$ does not vanish anywhere in~$U$. Similarly, $y_2$ is also a regular function on~$U$ whose differential is nonvanishing everywhere in~$U$.

It follows easily from~\eqref{eq_diag1} that, for any choice of the sign~$\delta_2$, the variable~$y_1$ is a real analytic function of the parameter~$\theta$ on the interval~$(\theta_{\min},\theta_{\max})$, and moreover, the derivative $dy_1/d\theta$ does not vanish anywhere in this interval. Hence, $\theta$ is a regular real analytic parameter on the curve~$\Gamma$ at all of its real points, except for the $8$ points where $\theta$ equals either~$\theta_{\min}$ or~$\theta_{\max}$. Therefore, all real point of~$\Gamma$ with $\theta\in (\theta_{\min},\theta_{\max})$ are smooth. Moreover, all coordinates of vertices in formulae~\eqref{eq_flex} are real analytic functions of~$\theta$ for~$\theta\in(\theta_{\min},\theta_{\max})$.

Let us now consider a point $\gamma\in\Gamma$ with $\theta=\theta_{\max}$. A convenient local parameter on~$\Gamma$ at~$\gamma$ is the function
$$
\tau=\delta_2\sqrt{1-\frac{p_2^2}{\cos^2\theta}}\ .
$$
Formula~\eqref{eq_diag1} reads as
$$
y_1=\frac{p_1}{p_2}\left(1-\tau^2\right)+\frac{\tau}{|p_2|}\sqrt{p_2^2-p_1^2+p_1^2\tau^2}\,.
$$
We see that $y_1$ is a real analytic function of the paratemer~$\tau$ in a neighborhood of the point $\tau=0$, and moreover, $\left.(dy_1/d\tau)\right|_{\tau=0}\ne0$. Hence,  $\gamma$ is a smooth point of the curve~$\Gamma$ and $\tau$ is a regular real analytic parameter on~$\Gamma$ in a neighborhood of~$\gamma$. Moreover, in a neighborhood of~$\gamma$, all coordinates of vertices in formulae~\eqref{eq_flex} are real analytic functions of~$\tau$.

The case of points where $\theta=\theta_{\min}$ is treated similarly, with~$y_1$ replaced by~$y_2$.

Finally, let us prove assertion~(3). Since $\Gamma_+$ and~$\Gamma_-$ are connected and consist of smooth points, we see that either~$\Gamma_+$ and~$\Gamma_-$ are irreducible components of~$\Sigma$ or $\Gamma=\Gamma_+\cup\Gamma_-$ is an irreducible component of~$\Sigma$. The fact that $\Gamma_+$ and~$\Gamma_-$ lie in the same irreducible component of~$\Sigma$ follows easily from the fact that the sign~$\delta_1\varepsilon_1$ will reverse as a result of analytic continuation with respect to the variable~$\theta$ along a loop~$\eta$ in~$\C$ that passes once around the point $\arccos |p_1|$, see Fig.~\ref{fig_gamma}.
\end{proof}

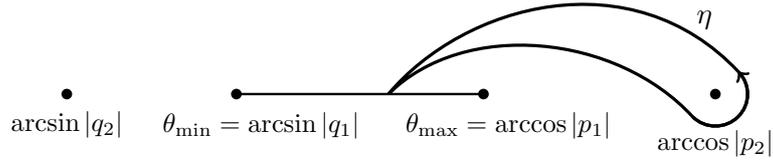
\begin{figure}
\begin{tikzpicture}\footnotesize

 \fill (.77,0) circle (2pt) node [below = 3pt] {$\arcsin|q_1|$};
 \fill (3,0) circle (2pt);
 \fill (6.25,0) circle (2pt);
 \fill (9.3,0) circle (2pt) node [below=10pt] {$\arccos|p_1|$};

 \draw [line width=.9pt] (3,0) -- (6.25,0) node [below=3pt, pos = 0.1] {$\theta_{\min}=\arcsin|q_2|$}
 node [below=3pt, pos = 1.1] {$\theta_{\max}=\arccos|p_2|$};

 \draw [line width=1.2pt] (5,0) .. controls (6,1) and (8,.8) .. (9,-.3) arc (-135:45:.424) .. controls (8.2,1.8) and (6,1.2) .. (5,0) node [pos = .1, above=1pt] {\small$\eta$} ;

 \draw [line width=1.2pt,->] (9,-.3) arc (-135:45:.424);

\end{tikzpicture}

 \caption{The loop $\eta$}\label{fig_gamma}
\end{figure}

\section{Calculation of volume}\label{section_volume}

The purpose of this section is to calculate the oriented volume of the exotic flexible octahedron~\eqref{eq_flex} with $|p_1|\ne |p_2|$ and $|q_1|\ne |q_2|$, and show that it is always nonconstant. Swapping the vertices $\ba_1\leftrightarrow\bb_1$ and~$\ba_2\leftrightarrow\bb_2$, we can achieve that $|p_1|<|p_2|$ and $|q_1|<|q_2|$. We assume these inequalities throughout this section. Let the configuration curve~$\Gamma$ be as in Proposition~\ref{propos_conf_exotic}.

We are going to write an explicit formula for the oriented volume~$\CV$ of the flexible octahedron~\eqref{eq_flex}.
Certainly, we would like to express $\CV$ through some parameter of the flexion. However, it turns out that $\theta$ is not a convenient parameter. A more convenient parameter is the cosine of the length of the diagonal~$\ba_1\bb_1$. In Section~\ref{section_config_exotic} we denoted this parameter by~$y_1$. We will now conveniently omit index~$1$ and denote it simply by~$y$.  We have
\begin{equation}\label{eq_y_theta}
 y=\langle\ba_1,\bb_1\rangle=\frac{p_1p_2}{\cos^2\theta}+\delta_1\delta_2\sqrt{
 \left(1-\frac{p_1^2}{\cos^2\theta}\right)
 \left(1-\frac{p_2^2}{\cos^2\theta}\right)
 }
\end{equation}

\begin{propos}
Let\/ $\Gamma_{\sigma}$ be any of the two connected components\/~$\Gamma_{\pm}$ of\/~$\Gamma$.  The largest and smallest values of~$y$ on~$\Gamma_{\sigma}$ are
 \begin{align*}
  y_{\max,\min}&=\frac{p_1p_2\pm\sqrt{(1-q_2^2-p_1^2)(1-q_2^2-p_2^2)}}{1-q_2^2},
 \end{align*}
where $y_{\max}$ and\/~$y_{\min}$  correspond to the signs~$+$ and~$-$, respectively. The value $y_{\max}$ is attained at the point $\gamma_{\max}\in\Gamma_{\sigma}$ with $\theta=\theta_{\min}$ and $\delta_1\delta_2=1$, and the value $y_{\min}$ is attained at the point $\gamma_{\min}\in\Gamma_{\sigma}$ with $\theta=\theta_{\min}$ and $\delta_1\delta_2=-1$. Moreover, the function~$y$ is strictly monotonic on each of the two arcs of\/~$\Gamma_{\sigma}$ connecting~$\gamma_{\min}$ and~$\gamma_{\max}$.
\end{propos}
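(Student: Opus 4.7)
The plan is to extract both claims directly from formula~\eqref{eq_y_theta} together with the structure of $\Gamma_\sigma$ given by Proposition~\ref{propos_conf_exotic}. The crucial feature of~\eqref{eq_y_theta} is that its right-hand side depends only on $\theta$ and on the product $\delta_1\delta_2$, so $y$ on $\Gamma_\sigma$ factors through a two-valued function of $(\theta,\delta_1\delta_2)\in[\theta_{\min},\theta_{\max}]\times\{\pm 1\}$. I will first evaluate at the boundaries: at $\theta=\theta_{\max}$ the equality $\cos^2\theta=p_2^2$ makes the square root in~\eqref{eq_y_theta} vanish, so both branches meet at the common value $y=p_1/p_2$; at $\theta=\theta_{\min}$ one has $\cos^2\theta=1-q_2^2$, and the assumptions $|p_1|<|p_2|$ and $p_2^2+q_2^2<1$ ensure that the square root equals $\sqrt{(1-q_2^2-p_1^2)(1-q_2^2-p_2^2)}/(1-q_2^2)>0$, producing precisely the two values $y_{\max}$ and $y_{\min}$ claimed in the statement.

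Next I will prove strict monotonicity of $y$ in $\theta$ on each branch. Substituting the monotone change of variable $u=1/\cos^2\theta$ turns~\eqref{eq_y_theta} into $y=p_1p_2\,u+\delta_1\delta_2\sqrt{(1-p_1^2u)(1-p_2^2u)}$. Computing $dy/du$, setting it equal to zero, isolating the square root, and squaring leads to the equation
\[
4p_1^2p_2^2(1-p_1^2u)(1-p_2^2u)=\bigl((p_1^2+p_2^2)-2p_1^2p_2^2u\bigr)^2.
\]
All $u$-dependent terms cancel upon expansion, so this reduces to $4p_1^2p_2^2-(p_1^2+p_2^2)^2=0$, i.e.\ $(p_1^2-p_2^2)^2=0$, contradicting $|p_1|\ne|p_2|$. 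Hence $dy/du$ has no zero in the interior, and each branch is strictly monotonic in $\theta$.

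Finally I will assemble the global picture. Near $\theta=\theta_{\max}$ the square root in~\eqref{eq_y_theta} is strictly positive, so the upper branch ($\delta_1\delta_2=+1$) lies strictly above $p_1/p_2$ and the lower branch strictly below it; strict monotonicity propagates this across the whole $\theta$-interval and yields $y_{\max}>p_1/p_2>y_{\min}$. Hence $y_{\max}$ and $y_{\min}$ are the global extrema on $\Gamma_\sigma$, attained only at the indicated points $\gamma_{\max}$ and $\gamma_{\min}$. For the arc monotonicity, recall from Proposition~\ref{propos_conf_exotic} that $\Gamma_\sigma$ is a circle covered by four $\theta$-monotone segments, with $\delta_1\delta_2$ flipping only at $\theta_{\max}$; both $\gamma_{\min}$ and $\gamma_{\max}$ lie over $\theta_{\min}$, so the two arcs of $\Gamma_\sigma\setminus\{\gamma_{\min},\gamma_{\max}\}$ each consist of one lower-branch segment ($\theta:\theta_{\min}\to\theta_{\max}$) joined continuously at $y=p_1/p_2$ to one upper-branch segment ($\theta:\theta_{\max}\to\theta_{\min}$). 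Along such an arc, $y$ grows monotonically from $y_{\min}$ to $p_1/p_2$ on the first half and from $p_1/p_2$ to $y_{\max}$ on the second half; the inequality $y_{\min}<p_1/p_2<y_{\max}$ ensures that these two monotonicities agree in direction, giving strict monotonicity of $y$ on the whole arc. The main obstacle is the sign bookkeeping around the circle, but it is entirely determined by Figure~\ref{fig_circle} together with the insensitivity of $y$ to~$\varepsilon_2$.
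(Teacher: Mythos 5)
Your proof is correct and follows essentially the same route as the paper: the same substitution $u=\cos^{-2}\theta$ turns \eqref{eq_y_theta} into the quadratic relation $y^2-2p_1p_2uy+(p_1^2+p_2^2)u-1=0$, and the paper reads the endpoint values, the meeting of the two branches at $u=p_2^{-2}$, and the monotonicity directly off the resulting hyperbola and its vertical tangent at $x_{\max}$, whereas you verify the monotonicity by showing $dy/du$ has no interior zero. The only step you state a bit too quickly is that near $\theta_{\max}$ the upper branch lies strictly above $p_1/p_2$: positivity of the square root alone does not give this, since $p_1p_2u-p_1/p_2$ may have either sign; one should either note that the square-root term is of order $\sqrt{x_{\max}-u}$ and hence dominates the linear term (the vertical-tangency phenomenon), or check the inequality $y_{\min}<p_1/p_2<y_{\max}$ directly, which reduces to $p_2^2>p_1^2$.
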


\begin{proof}
Put $x=\cos^{-2}\theta$; then $x$ is a strictly increasing function of $\theta$. The values of~$x$ at the endpoints of the segment~$[\theta_{\min},\theta_{\max}]$ are  $x_{\min}=(1-q_2^2)^{-1}$ and $x_{\max}=p_2^{-2}$, respectively. Now, formula~\eqref{eq_y_theta} reads as
 $$
 y^2-2p_1p_2xy+(p_1^2+p_2^2)x-1=0.
 $$
 Let $Q$ be the hyperbola given by this equation. It is easy to check that the vertical line $x=x_{\max}$ is tangent to the left branch of the hyperbola~$Q$. Hence, the graph of the dependence between~$x$ and~$y$ is the segment of~$Q$ enclosed between the lines $x=x_{\min}$ and~$x=x_{\max}$, see Fig.~\ref{fig_hyperbola}.
 The proposition follows easily.
\end{proof}

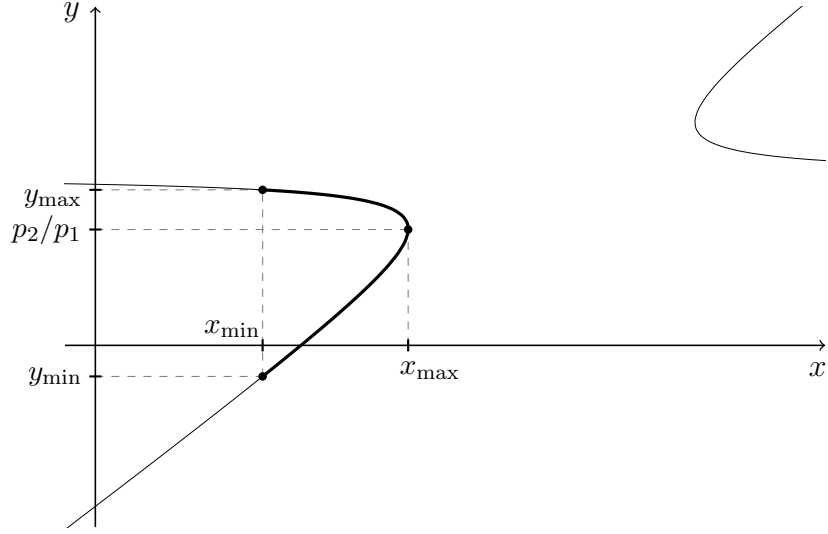
\begin{figure}

\begin{tikzpicture}[scale=2]
  \begin{scope}
    \draw [dashed,gray,line width = .4pt] (-3,-.0939)--(-1.9,-.0939);
    \draw [dashed,gray,line width = .4pt] (-3,-1.331)--(-1.9,-1.331);
    \draw [dashed,gray,line width = .4pt] (-3,-.357)--(-.943,-.357);
    \draw [dashed,gray,line width = .4pt] (-1.9,-1.331)--(-1.9, -.0939);
    \draw [dashed,gray,line width = .4pt] (-.943,-1.125)--(-.943, -.357);
    \clip (-3.2,-2.33) rectangle +(5,3.45);
    \pgfmathsetmacro{\a}{1}
    \pgfmathsetmacro{\b}{.333333333333}
    \draw[line width =.3pt] plot[domain=-2:2,smooth,rotate=18.434948823] ({\a*cosh(\x)},{\b*sinh(\x)});
    \draw[line width =.3pt] plot[domain=-2:2,smooth,rotate=18.434948823] ({-\a*cosh(\x)},{\b*sinh(\x)});

  \end{scope}
  \draw [line width=.6pt,->] (-3.2,-1.125)--(1.8,-1.125) node [below = 2pt, pos=.99] {$x$};
    \draw [line width=.6pt,->] (-3,-2.33)--(-3,1.12) node [left = 1.5pt, pos=.99] {$y$};
  \begin{scope}
   \clip (-1.9,-2.33) rectangle +(2,3.45);
   \pgfmathsetmacro{\a}{1}
    \pgfmathsetmacro{\b}{.333333333333}
   \draw[line width =1.2pt] plot[domain=-2:2,smooth,rotate=18.434948823] ({-\a*cosh(\x)},{\b*sinh(\x)});
  \end{scope}
   \fill (-.943,-.357) circle (.8pt);
   \fill (-1.9,-1.331) circle (.8pt);
   \fill (-1.9,-.0939) circle (.8pt);
   \draw [line width = .8pt] (-.943,-1.125+.04)-- (-.943,-1.125-.04);
   \draw (-.8,-1.125) node [below = 2pt] {\small$x_{\max}$};
   \draw [line width = .8pt] (-1.9,-1.125+.04)-- (-1.9,-1.125-.04);
   \draw (-2.1,-1.125) node [above=-1pt] {\small$x_{\min}$};
   \draw [line width = .8pt] (-3+.04,-1.331)-- (-3-.04,-1.331);
   \draw [line width = .8pt] (-3+.04,-.0939)-- (-3-.04,-.0939);
   \draw [line width = .8pt] (-3+.04,-.357)-- (-3-.04,-.357);
   \draw (-3,-1.331) node [left=1pt] {\small$y_{\min}$};
   \draw (-3,-.15) node [left=1pt] {\small$y_{\max}$};
   \draw (-3,-.357) node [left=1pt] {\small$p_1/p_2$};
\end{tikzpicture}

\caption{The hyperbola~$Q$} \label{fig_hyperbola}
\end{figure}

Now, we are ready to write an explicit formula for the oriented volume of the octahedron.
Since the oriented volume of a polyhedron in~$\bS^3$ is an element of~$\R/2\pi^2\Z$, all formulae for volume below will be modulo~$2\pi^2\Z$.

\begin{propos}\label{propos_volume}
 Suppose that $|p_1|<|p_2|$ and $|q_1|<|q_2|$. Put $r_j=(1-q_j^2)^{-1/2}$ for $j=1,2$. Then the oriented volume of the octahedron~\eqref{eq_flex} is given by the formula
 \begin{equation}\label{eq_volume}
  \CV=-\frac{\delta_1\sgn(p_2)\pi}{2}\bigl(\varepsilon_1A_1(y)-\varepsilon_2A_2(y)\bigr),
 \end{equation}
 where
 \begin{multline}
  A_j(y)=
  \arccos\frac{r_j(p_1-p_2y)}{\sqrt{(1-r_j^2p_2^2)(1-y^2)}}+\arccos\frac{r_j(p_2-p_1y)}{\sqrt{(1-r_j^2p_1^2)(1-y^2)}}\\
  {}+\arccos\frac{y-r_j^2p_1p_2}{\sqrt{(1-r_j^2p_1^2)(1-r_j^2p_2^2)}} -\pi.\label{eq_areas}
 \end{multline}
  Moreover, $0<A_1(y)<2\pi$ for all $y\in [y_{\min},y_{\max}]$,  $0<A_2(y)<2\pi$ for all $y\in (y_{\min},y_{\max})$, and $A_2(y_{\min})=A_2(y_{\max})=0$.
\end{propos}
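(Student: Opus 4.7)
My strategy is to reduce the oriented volume of the octahedron to a sum of four tetrahedra sharing the edge $\ba_3\bb_3$. Choosing the apex $\bo=\ba_3$ in the definition of the oriented volume, the four faces that contain $\ba_3$ contribute zero, so
$$\CV = \sum \Vor(\ba_3,\bb_3,X,Y),$$
where the sum runs over the four faces $\bb_3 XY$ of the ``lower cap,'' with $(X,Y)$ a pair of adjacent vertices of the belt quadrilateral $\ba_1\ba_2\bb_1\bb_2$ and the signs dictated by the coherent orientation of the octahedron. Each of these tetrahedra has one edge $\ba_3\bb_3$ lying in the meridian plane $x_2=x_4=0$, one edge $XY$ of length $\pi/2$ whose endpoints lie on the polar lines $L_1$ and $L_2$, and four remaining edges expressible through $p_1,p_2,q_1,q_2$ and $\theta$.

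The next step is to recognize the quantities $A_j(y)$ geometrically. The spherical law of cosines $\cos A=(\cos a-\cos b\cos c)/(\sin b\sin c)$ shows that $A_j(y)$ is exactly the spherical excess of the triangle with side-cosines $r_jp_1$, $r_jp_2$, $y$; that is, $A_j(y)$ is the area of such a spherical triangle. A direct trigonometric check shows that the link of $\ba_2$ (resp.\ $\bb_2$) is the deltoid obtained by gluing two copies of this triangle (for $j=1$, resp.\ $j=2$) along the side of length $\arccos y$, which equals the diagonal $\angle \ba_1\ba_2\bb_1$ (resp.\ $\angle \ba_1\bb_2\bb_1$) computed from $\ell_{\ba_1\ba_2}=\ell_{\ba_1\bb_2}=\ell_{\bb_1\ba_2}=\ell_{\bb_1\bb_2}=\pi/2$ by the law of cosines applied to the triangle $\ba_1\ba_2\bb_1$ (resp.\ $\ba_1\bb_2\bb_1$). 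Thus $A_j(y)$ is \emph{half the area of the link} of $\ba_2$ or $\bb_2$.

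I would then prove formula \eqref{eq_volume} by differentiating in $y$ and applying Schl\"afli's formula $d\CV=\tfrac12\sum_e \ell_e\, d\varphi_e$. Grouping edges by type (the four $\pi/2$-edges of the belt, the pairs $\{\ba_i\ba_3,\ba_i\bb_3\}$ with equal lengths, and the pairs $\{\ba_i'\ba_3,\ba_i'\bb_3\}$ whose lengths sum to $\pi$) and using the Gauss--Bonnet identities for the link quadrilaterals of $\ba_2$ and $\bb_2$ to convert sums of dihedral angles into $A_j(y)$, the differential $d\CV/dy$ should collapse into $-\tfrac{\delta_1\sgn(p_2)\pi}{2}\bigl(\varepsilon_1 A_1'(y)-\varepsilon_2 A_2'(y)\bigr)$. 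The constant of integration is fixed by evaluating at $y=y_{\max}$ (equivalently $\theta=\theta_{\min}$), where the octahedron degenerates so that $\bb_2$ lies on the great circle through $\ba_3\bb_3$, forcing $A_2(y_{\max})=0$; the required value of $\CV$ at this endpoint is then read off from a degenerate configuration for which the volume can be computed directly.

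The remaining claims are routine once the geometric identification is in hand: the bounds $0\le A_j\le 2\pi$ hold because $A_j$ is the area of a spherical triangle (hence at most the hemisphere area), and strict positivity/degeneracy correspond to non-degeneracy/degeneracy of that triangle. The inequalities $|p_1|<|p_2|$ and $|q_1|<|q_2|$ imply that the triangle for $A_1$ never degenerates on $[y_{\min},y_{\max}]$, while the triangle for $A_2$ degenerates exactly at the endpoints where $\theta=\theta_{\min}$. The main obstacle I anticipate is the bookkeeping in the Schl\"afli calculation: correctly identifying the signs $\delta_1,\varepsilon_1,\varepsilon_2,\sgn(p_2)$ that distinguish among the four branches of $\Gamma_\pm$ over $[\theta_{\min},\theta_{\max}]$ and tracking them through each Gauss--Bonnet substitution.
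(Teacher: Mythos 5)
Your geometric identification of $A_j(y)$ — the area of the spherical triangle with side-cosines $r_jp_1$, $r_jp_2$, $y$, i.e.\ half the area of the (deltoid) link of $\ba_2$ resp.\ $\bb_2$ — is correct and matches the paper, as does your analysis of when these triangles degenerate. But the core of your argument, the Schl\"afli computation, is not carried out, and as described it does not close. The Schl\"afli sum $\tfrac12\sum_e\ell_e\,d\varphi_e$ weights each angle differential by the \emph{length} of its edge; the eight non-belt edges have the four distinct lengths $\arccos p_1$, $\arccos p_2$, $\arccos q_1$, $\arccos q_2$ (and supplements of the last two), none equal to $\pi/2$. Gauss--Bonnet applied to the link of $\ba_2$ only controls the \emph{unweighted} sum $\varphi_{\ba_2\ba_1}+\varphi_{\ba_2\bb_1}+\varphi_{\ba_2\ba_3}+\varphi_{\ba_2\bb_3}=2A_1(y)+2\pi$, so after substituting it you are left with residual terms such as $(\arccos q_1-\tfrac{\pi}{2})\,d(\varphi_{\ba_2\ba_3}+\varphi_{\ba_2\bb_3})$ plus the analogous terms at $\ba_1$, $\bb_1$, $\bb_2$, whose cancellation requires further angle identities (coming from the deltoid/antideltoid symmetries of the links) that you neither state nor prove. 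Separately, your base point for the integration is not actually ``read off directly'': at $\theta=\theta_{\min}$ only the two tetrahedra containing $\bb_2$ degenerate, the octahedron itself is non-degenerate, and its volume there equals $-\tfrac{\delta_1\varepsilon_1\sgn(p_2)\pi}{2}A_1(y_{\max})$ --- precisely the kind of quantity the proposition is meant to establish, so the initial condition needs an independent computation you have not supplied. (A minor further point: taking $\bo=\ba_3$ violates the non-degeneracy requirement in the definition of $\Vor$ and needs a limiting argument.)

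For comparison, the paper avoids differentiation entirely. It decomposes the octahedron into four tetrahedra sharing the diagonal $\ba_1\bb_1$ (not $\ba_3\bb_3$), uses the rotation by $\pi$ about $L_1$ to replace $\bb_3$ by $\ba_3$ at the cost of replacing $\ba_2,\bb_2$ by their antipodes, and then observes that each resulting pair of tetrahedra, e.g.\ $\bb_1\ba_1\ba_2\ba_3\cup\bb_1\ba_1(-\ba_2)\ba_3$, is exactly the union of the meridian arcs (with poles $\pm\ba_2$) over the spherical triangle $\ba_1\bb_1\bc$ on the equatorial sphere; hence its volume is $\tfrac{\pi}{2}$ times that triangle's area, which is $A_1(y)$ by the law of cosines. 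The signs $\delta_1,\varepsilon_1,\varepsilon_2,\sgn(p_2)$ come out of explicit $4\times4$ determinants. This yields \eqref{eq_volume} in closed form in one step. If you want to salvage your route, you must either prove the missing angle identities and supply an independent endpoint evaluation, or switch to the fibration argument --- at which point the Schl\"afli machinery becomes unnecessary.
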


Before proving this proposition, let us recall the following  formula for the area of a spherical triangle. It can be easily obtained by combining the standard formula $A=\alpha+\beta+\gamma-\pi$ through the spherical excess and the spherical law of cosines.
\smallskip

 \textit{Let $c_1$, $c_2$, and~$c_3$ be the cosines of lengths of sides of a spherical triangle. Then the area of this triangle is equal to}
 \begin{multline}\label{eq_ar_tri}
  A= \arccos\frac{c_1-c_2c_3}{\sqrt{(1-c_2^2)(1-c_3^2)}}+
     \arccos\frac{c_2-c_3c_1}{\sqrt{(1-c_3^2)(1-c_1^2)}}\\{}+
     \arccos\frac{c_3-c_1c_2}{\sqrt{(1-c_1^2)(1-c_2^2)}}-\pi.
 \end{multline}

\begin{proof}[Proof of Proposition~\ref{propos_volume}]
We denote the oriented and unoriented volumes of a spherical tetrahedron~$\bc_1\bc_2\bc_3\bc_4$ by $\Vor(\bc_1,\bc_2,\bc_3,\bc_4)$ and $V(\bc_1,\bc_2,\bc_3,\bc_4)$, respectively, and the area of a spherical triangle~$\bc_1\bc_2\bc_3$ by $A(\bc_1,\bc_2,\bc_3)$.

Decomposing the octahedron~\eqref{eq_flex} into four oriented spherical tetrahedra with common edge~$\ba_1\bb_1$, we obtain the following formula for the oriented volume~$\CV$ of it:
\begin{equation}\label{eq_vol_form1}
\begin{split}
\CV={}&\Vor(\bb_1,\ba_1,\ba_2,\ba_3)-\Vor(\bb_1,\ba_1,\ba_2,\bb_3)\\
{}-{}&\Vor(\bb_1,\ba_1,\bb_2,\ba_3)+\Vor(\bb_1,\ba_1,\bb_2,\bb_3).
\end{split}
\end{equation}
The rotation by~$\pi$ about the line~$L_1$ takes the tetrahedra~$\bb_1\ba_1\ba_2\bb_3$ and~$\bb_1\ba_1\bb_2\bb_3$ to the tetrahedra~$\bb_1\ba_1(-\ba_2)\ba_3$ and~$\bb_1\ba_1(-\bb_2)\ba_3$, respectively. Hence, formula~\eqref{eq_vol_form1} reads as
\begin{equation}\label{eq_vol_form2}
\begin{split}
\CV={}&\Vor(\bb_1,\ba_1,\ba_2,\ba_3)-\Vor(\bb_1,\ba_1,-\ba_2,\ba_3)\\
{}-{}&\Vor(\bb_1,\ba_1,\bb_2,\ba_3)+\Vor(\bb_1,\ba_1,-\bb_2,\ba_3).
\end{split}
\end{equation}
For any spherical tetrahedron~$\bc_1\bc_2\bc_3\bc_4$, we have that
$$
\Vor(\bc_1,\bc_2,\bc_3,\bc_4)=\sgn\bigl(\det(\bc_1,\bc_2,\bc_3,\bc_4)\bigr)V(\bc_1,\bc_2,\bc_3,\bc_4),
$$
where $\det(\bc_1,\bc_2,\bc_3,\bc_4)$ is the determinant  consisting of the coordinates of the vertices~$\bc_1$, $\bc_2$, $\bc_3$, and~$\bc_4$. It can be checked immediately that
\begin{align*}
 \sgn\bigl(\det(\bb_1,\ba_1,\ba_2,\ba_3)\bigr)&=-\delta_1\varepsilon_1\sgn(p_2),\\
 \sgn\bigl(\det(\bb_1,\ba_1,\bb_2,\ba_3)\bigr)&=-\delta_1\varepsilon_2\sgn(p_2).
\end{align*}
So formula~\eqref{eq_vol_form2} reads as
\begin{equation}\label{eq_vol_form3}
\begin{split}
\CV&=-\delta_1\varepsilon_1\sgn(p_2)\bigl(V(\bb_1,\ba_1,\ba_2,\ba_3)+V(\bb_1,\ba_1,-\ba_2,\ba_3)\bigr)\\
&\phantom{={}}
+\delta_1\varepsilon_2\sgn(p_2)\bigl(V(\bb_1,\ba_1,\bb_2,\ba_3)+V(\bb_1,\ba_1,-\bb_2,\ba_3)\bigr)\\
&=-\delta_1\varepsilon_1\sgn(p_2) \volume(\Phi_1)+\delta_1\varepsilon_2\sgn(p_2)\volume(\Phi_2),
\end{split}
\end{equation}
where $\Phi_1$ is the union of the two spherical tetrahedra~$\bb_1\ba_1\ba_2\ba_3$ and~$\bb_1\ba_1(-\ba_2)\ba_3$, and $\Phi_2$ is the union of the two spherical tetrahedra~$\bb_1\ba_1\bb_2\ba_3$ and~$\bb_1\ba_1(-\bb_2)\ba_3$.

Consider the points~$\pm\ba_2$ as poles of the sphere~$\bS^3$ and let~$H=\bS^2$ be the equatorial great sphere equidistant from them. Then $\ba_1$ and~$\bb_1$ lie on~$H$. Let $\bc$ be the intersection point of the meridian through~$\ba_3$ and the equatorial great sphere~$H$. The set~$\Phi_1$ is the union of the meridians through all points of the spherical triangle~$\ba_1\bb_1\bc$. Therefore,
\begin{equation*}
\volume(\Phi_1)=\frac{\volume(\bS^3)}{\area(\bS^2)}\cdot A(\ba_1,\bb_1,\bc)=\frac{\pi}{2}\cdot A(\ba_1,\bb_1,\bc).
\end{equation*}
Using the spherical law of cosines, we easily obtain that
$$
\cos\dist(\ba_1,\bc)=\cos\angle\ba_1\ba_2\ba_3=\frac{p_1}{\sqrt{1-q_1^2}}=r_1p_1.
$$
Similarly, we have $\cos\dist(\bb_1,\bc)=r_1p_2$. Substituting these values and also the value $\cos\dist(\ba_1,\bb_1)=y$ to~\eqref{eq_ar_tri}, we obtain that $A(\ba_1,\bb_1,\bc)=A_1(y)$ and hence $\volume(\Phi_1)=\pi A_1(y)/2$. Similarly, $\volume(\Phi_2)=\pi A_2(y)/2$. Thus, formula~\eqref{eq_vol_form3} reads as~\eqref{eq_volume}.

Since the tetrahedron~$\bb_1\ba_1\ba_2\ba_3$ is always non-degenerate, we see that $0<A_1(y)<2\pi$ for all~$y\in [y_{\min},y_{\max}]$. The tetrahedron~$\bb_1\ba_1\bb_2\ba_3$ is degenerate if and only if $y=y_{\min}$ or $y=y_{\max}$. Hence, $0<A_2(y)<2\pi$ for $y_{\min}<y<y_{\max}$ and $A_2(y_{\min})=A_2(y_{\max})= 0$.
\end{proof}

It is not hard to check that the functions~$A_1(y)$ and~$A_2(y)$ are real analytic on the interval~$(y_{\min},y_{\max})$, since none of the arccosines on the right-hand sides of the formulae~\eqref{eq_areas} for $j=1,2$ has singularities on this interval. Therefore, it follows from formula~\eqref{eq_volume} that the oriented volume~$\CV$ is a real analytic function at all points of~$\Gamma$, except for the four points where $\theta=\theta_{\min}$. Similarly, if we swap the roles of the vertices by $\ba_1\leftrightarrow\ba_2$ and $\bb_1\leftrightarrow\bb_2$, i.\,e. take  the cosine of the length of the diagonal~$\ba_2\bb_2$ for the parameter, then we will see that~$\CV$ is a real analytic function at all points of~$\Gamma$, except for the four points where $\theta=\theta_{\max}$. Thus, the function~$\CV$ is real analytic on the entire curve~$\Gamma$. Note, however, that this could also be proven without using the explicit formula for the volume. For the reader's convenience, we provide the corresponding argument.

\begin{propos}
The oriented volume of the flexible octahedron~\eqref{eq_flex} is a real analytic function $\CV\colon \Gamma\to\R/2\pi^2\Z$.
\end{propos}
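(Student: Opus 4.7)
The plan is to prove analyticity directly from the general definition of $\Vor(P)$ and the analytic structure on $\Gamma$ established in Proposition~\ref{propos_conf_exotic}, without appealing to formula~\eqref{eq_volume}. Recall that $\Gamma$ is smooth at all of its real points, and the coordinates of all six vertices in~\eqref{eq_flex} are real analytic functions on~$\Gamma$. Thus every geometric datum attached to a vertex moves real analytically as we move along~$\Gamma$.

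Fix an arbitrary point $\gamma_0\in\Gamma$. The first step is to choose an auxiliary point $\bo\in\bS^3$ such that all eight tetrahedra $\bo P(v_0)P(v_1)P(v_2)$, as $[v_0,v_1,v_2]$ ranges over the faces of the octahedron, are non-degenerate at~$\gamma_0$. Since at~$\gamma_0$ each face of the octahedron spans a unique great $2$-sphere, the set of admissible~$\bo$ is the complement in $\bS^3$ of eight great $2$-spheres, which is open and dense. So such an $\bo$ exists, and by continuity of the vertex coordinates the eight tetrahedra remain non-degenerate on some open neighborhood~$U$ of~$\gamma_0$ in~$\Gamma$.

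The second step is the basic fact that the oriented volume of a non-degenerate spherical tetrahedron depends real analytically on its vertices. On the open subset of $(\bS^3)^4$ where the four vertices are in general position, the sign of the determinant is locally constant, so $\Vor(\bc_0,\bc_1,\bc_2,\bc_3)=\pm V(\bc_0,\bc_1,\bc_2,\bc_3)$ with a fixed sign, and the unoriented volume can for example be expressed real analytically through the Schl\"afli formula or via the explicit area-formula~\eqref{eq_ar_tri} applied to the polar tetrahedron. Combined with the analyticity of the vertex coordinates as functions on~$\Gamma$, this shows that each summand in
\[
S_{\bo}(\gamma)=\sum_{[v_0,v_1,v_2]\in K}\Vor\bigl(\bo,P(v_0),P(v_1),P(v_2)\bigr)
\]
is real analytic on~$U$, and hence so is $S_{\bo}\colon U\to\R$.

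By definition of the generalized oriented volume, $\CV(\gamma)\equiv S_{\bo}(\gamma)\pmod{2\pi^2\Z}$ on~$U$. Any other admissible choice of~$\bo$ changes $S_{\bo}$ only by an integer multiple of $2\pi^2$, so the induced map $U\to\R/2\pi^2\Z$ is independent of the choice and coincides with~$\CV|_U$. Thus $\CV$ is real analytic on~$U$, and since $\gamma_0\in\Gamma$ was arbitrary, $\CV\colon\Gamma\to\R/2\pi^2\Z$ is real analytic. The only subtle point in this argument is the existence of a single $\bo$ that works in a neighborhood of $\gamma_0$, which is handled by the openness-and-density remark above; everything else is a routine assembly of well-known facts about spherical tetrahedra and analytic functions.
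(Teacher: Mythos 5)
Your argument is correct, but it takes a genuinely different route from the paper. The paper chooses a point $\bp\in\bS^3$ off all faces and a real analytic $2$-form $\psi$ on $\bS^3\setminus\{\bp\}$ with $d\psi=\omega$, and then invokes Stokes' theorem to write $\CV$ as the sum of the integrals of the \emph{fixed} form $\psi$ over the eight faces; analyticity is then immediate from Proposition~\ref{propos_conf_exotic}, because each face is a real analytic family of non-degenerate $2$-simplices and one is integrating a fixed analytic form over it. You instead cone over an auxiliary apex $\bo$ and reduce to the real analytic dependence of the volume of a non-degenerate spherical tetrahedron on its vertices; the local choice of $\bo$ and the passage to $\R/2\pi^2\Z$ are handled correctly. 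The trade-off is that the paper's route never needs any fact about tetrahedral volumes beyond Stokes, whereas yours leans on a nontrivial input at exactly that point, and this is where your justification is loose: there is no analogue of the spherical-excess formula~\eqref{eq_ar_tri} for the volume of a tetrahedron in $\bS^3$ (applying it to the polar tetrahedron does not produce the volume), so that alternative should be dropped. The claim itself is true and can be justified either by the Schl\"afli differential formula (the differential $dV=\tfrac12\sum_e \ell_e\,d\varphi_e$ is real analytic in the vertices on the non-degenerate locus, hence so is $V$) or, more elementarily, by pulling the volume form back to the standard simplex via the quasi-linear parametrization and noting that the integrand depends real analytically on the vertices. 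With that repair your proof is complete.
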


\begin{proof}
Let $\omega$ be the standard volume form on~$\bS^3$. Choose a point $\bp\in\bS^3$ that does not belong to any face of the octahedron and a real analytic $2$-form $\psi$ on~$\bS^3\setminus\{\bp\}$ such that $d\psi=\omega|_{\bS^3\setminus\{\bp\}}$. Then by Stokes' theorem we have that the oriented volume of the octahedron is equal to the sum of the integrals of~$\psi$ over the faces of the octahedron. By Proposition~\ref{propos_conf_exotic} the coordinates of vertices of the octahedron are real analytic functions on~$\Gamma$. Since any face~$F$ of the octahedron remains non-degenerate during the flexion, we obtain that the integral of~$\psi$ over~$F$ is also a real analytic function.
\end{proof}

\begin{cor}\label{cor_nonconst}
 The oriented volume $\CV$ of the flexible octahedron~\eqref{eq_flex} with $|p_1|\ne |p_2|$ and $|q_1|\ne |q_2|$ is nonconstant on any arbitrarily small segment of the configuration curve~$\Gamma$.
\end{cor}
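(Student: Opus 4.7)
The plan is to reduce the claim to a property of the real analytic functions $A_1(y)$ and $A_2(y)$ via the explicit formula~\eqref{eq_volume}, exploiting that $A_2$ vanishes only at the endpoints $y_{\min},y_{\max}$. Combining the preceding proposition (that $\CV\colon\Gamma\to\R/2\pi^2\Z$ is real analytic) with the identity principle for real analytic functions on a connected smooth one-dimensional curve, constancy of $\CV$ on any open sub-arc of $\Gamma$ propagates to the whole connected component containing it. Hence it suffices to show that $\CV$ is nonconstant on each of the two connected components $\Gamma_+$ and $\Gamma_-$.

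Fix a component~$\Gamma_\sigma$. By Proposition~\ref{propos_conf_exotic}, $\Gamma_\sigma$ is a cycle of four analytic arcs on which $\delta_1,\varepsilon_1$ are fixed while $(\delta_2,\varepsilon_2)$ runs through all four sign combinations; adjacent arcs meet either at a point with $\theta=\theta_{\min}$ (where $\varepsilon_2$ flips and $\delta_2$ is preserved) or at a point with $\theta=\theta_{\max}$ (where $\delta_2$ flips and $\varepsilon_2$ is preserved). I would select two adjacent arcs $I_+,I_-$ meeting at a point~$P$ with $\theta=\theta_{\min}$; on these arcs the triple $(\delta_1,\delta_2,\varepsilon_1)$ is the same and $\varepsilon_2=\pm 1$ respectively, so by~\eqref{eq_y_theta} the parameter~$y$ traces the same closed interval $J$ of positive length as $\theta$ runs over $[\theta_{\min},\theta_{\max}]$ on each arc (explicitly, $J=[p_1/p_2,y_{\max}]$ or $J=[y_{\min},p_1/p_2]$ according to $\delta_1\delta_2$).

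Assume for contradiction that $\CV\equiv C$ on~$\Gamma_\sigma$. Formula~\eqref{eq_volume} then expresses $\CV$ on~$I_\pm$ through the real-valued continuous functions $f_\pm(y)=-\frac{\delta_1\sgn(p_2)\pi}{2}\bigl(\varepsilon_1 A_1(y)\mp A_2(y)\bigr)$ on~$J$; each reduces to $C$ modulo~$2\pi^2$, and continuity forces each to be an honest real constant $\tilde C_\pm$. At the shared endpoint~$P$ one has $A_2=0$ by Proposition~\ref{propos_volume}, so $\tilde C_+=f_+(P)=f_-(P)=\tilde C_-$. Subtracting the two identities on~$J$ yields $A_2(y)\equiv 0$ on~$J$, contradicting the strict positivity of $A_2$ on $(y_{\min},y_{\max})$ proved in Proposition~\ref{propos_volume}.

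The main subtlety I anticipate is the step of promoting $C\in\R/2\pi^2\Z$ to honest real constants $\tilde C_\pm$ on the two arcs and then verifying that $\tilde C_+$ and $\tilde C_-$ agree as real numbers (not merely modulo~$2\pi^2$); this is precisely where the vanishing of $A_2$ at~$P$ is essential, and is the reason for using arcs meeting at $\theta=\theta_{\min}$ rather than at $\theta=\theta_{\max}$, where $A_2$ need not vanish.
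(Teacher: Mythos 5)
Your proof is correct and follows essentially the same route as the paper: both reduce, via the real analyticity of $\CV$ and the smoothness of $\Gamma$ at its real points, to showing nonconstancy on a connected component, and both detect that nonconstancy from the fact that flipping the sign $\varepsilon_2$ at a fixed value of $y$ changes $\CV$ by $\delta_1\sgn(p_2)\pi A_2(y)$, which is nonzero for $y$ in the interior of $[y_{\min},y_{\max}]$ by Proposition~\ref{propos_volume}. The paper simply evaluates $\CV$ at two points $\gamma_\pm$ with a common $y_0\in(y_{\min},y_{\max})$ and opposite $\varepsilon_2$ and notes that $\CV(\gamma_+)-\CV(\gamma_-)=\sgn(p_2)\pi A_2(y_0)\ne 0 \pmod{2\pi^2\Z}$ using the bound $0<A_2(y_0)<2\pi$, whereas your contradiction-and-lifting argument over two whole arcs reaches the same conclusion without needing the upper bound $A_2<2\pi$.
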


\begin{proof}
Consider the connected component $\Gamma_{\sigma}$ of~$\Gamma$, where $\sigma$ is either~$+$ or~$-$. By Proposition~\ref{propos_conf_exotic} we have the freedom to take $\delta_1=1$; then $\varepsilon_1=1$ if $\Gamma_{\sigma}=\Gamma_+$ and $\varepsilon_1=-1$ if $\Gamma_{\sigma}=\Gamma_-$. Choose an arbitrary $y_0\in(y_{\min},y_{\max})$ and let $\gamma_{\pm}\in\Gamma_{\sigma}$ be the two points with $y=y_0$ such that $\varepsilon_2=1$ at $\gamma_+$ and $\varepsilon_2=-1$ at $\gamma_-$. Then $0<A_2(y_0)<2\pi$. Hence,
$$\CV(\gamma_+)-\CV(\gamma_-)=\sgn(p_2)\pi A_2(y_0)\ne 0\pmod {2\pi^2\Z}.$$
Thus, the function~$\CV$ is nonconstant on~$\Gamma_{\sigma}$. Since $\CV$ is real analytic, it follows that $\CV$ is nonconstant on any arbitrarily small segment of~$\Gamma_{\sigma}$.
\end{proof}

Combining Corollary~\ref{cor_nonconst} and Proposition~\ref{propos_antipode}, we obtain the following result.

\begin{theorem}\label{thm_volume_precise}
Suppose that\/ $\ba_1\ba_2\ba_3\bb_1\bb_2\bb_3$ is a flexible octahedron of the form~\eqref{eq_flex} with $|p_1|\ne |p_2|$ and $|q_1|\ne |q_2|$, and\/ $\ba_1'\ba_2'\ba_3'\bb_1'\bb_2'\bb_3'$ is an octahedron obtained from it by replacing some of its vertices with their antipodes. Then the oriented volume of the octahedron~$\ba_1'\ba_2'\ba_3'\bb_1'\bb_2'\bb_3'$ is nonconstant on any arbitrarily small segment of the configuration curve~$\Gamma$. Thus, the octahedron~$\ba_1\ba_2\ba_3\bb_1\bb_2\bb_3$ provides a counterexample to the Modified Bellows Conjecture in\/~$\bS^3$.
\end{theorem}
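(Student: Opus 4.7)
The plan is to deduce the theorem by combining Corollary~\ref{cor_nonconst} with Proposition~\ref{propos_antipode}, which between them already contain all the hard work. Concretely, I would fix an arbitrary choice of subset of the six vertices to be replaced by its antipode, and show that the resulting octahedron $\ba_1'\ba_2'\ba_3'\bb_1'\bb_2'\bb_3'$ has nonconstant oriented volume on every subarc of~$\Gamma$; since the choice is arbitrary, this kills the Modified Bellows Conjecture for the given octahedron.

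First I would apply Proposition~\ref{propos_antipode}. This gives that, up to a rigid motion of~$\bS^3$ and a renaming of the vertices, the modified octahedron again has the form~\eqref{eq_flex} with new parameters $p_1'$, $p_2'$, $q_1'$, $q_2'$ satisfying $|p_1'|\ne|p_2'|$ and $|q_1'|\ne|q_2'|$, i.e.\ it is itself an exotic flexible octahedron of the geometric type constructed in Section~\ref{section_constr}. Denote its configuration curve by~$\Gamma'$.

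Next I would observe that the antipodal replacement of a prescribed subset of vertices is a pointwise operation $\bS^3\to\bS^3$ (a coordinate sign change in the ambient~$\E^4$) that commutes with every continuous flexion, hence induces a real-analytic homeomorphism $\Phi\colon\Gamma\to\Gamma'$; composing with the isometry and renaming from Proposition~\ref{propos_antipode} preserves this property, since isometries of~$\bS^3$ act as biregular automorphisms on the configuration spaces and relabeling of vertices is just a permutation of coordinates in $(\RP^1)^{12}$. In particular, $\Phi$ sends any arbitrarily small subarc of~$\Gamma$ to an arbitrarily small subarc of~$\Gamma'$, and conversely.

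Finally, I would apply Corollary~\ref{cor_nonconst} to the modified octahedron: its oriented volume is nonconstant on any arbitrarily small subarc of~$\Gamma'$. Pulling back along~$\Phi$, the volume of $\ba_1'\ba_2'\ba_3'\bb_1'\bb_2'\bb_3'$ viewed as a function on~$\Gamma$ is nonconstant on every subarc of~$\Gamma$, which is precisely the statement of the theorem. There is no real obstacle here; the mildest issue to verify is that $\Phi$ genuinely identifies the configuration curves as real-analytic manifolds so that ``arbitrarily small segment'' translates faithfully, but this is immediate because, at the level of vertex coordinates, $\Phi$ is just a coordinate-wise sign change followed by a rigid motion.
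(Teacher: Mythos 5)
Your proposal is correct and is essentially identical to the paper's argument: the paper derives Theorem~\ref{thm_volume_precise} precisely by combining Corollary~\ref{cor_nonconst} with Proposition~\ref{propos_antipode}, exactly as you do. The extra details you supply about the identification of the configuration curves under the antipodal replacement are left implicit in the paper but are the natural justification for that combination.
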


\begin{remark}\label{remark_direct}
 In the above proof, it is important that the curve~$\Gamma$ is smooth at all its real points. Indeed, we have used the real analyticity of the function~$\CV$ to deduce its non-constancy on any arbitrarily small segment from the fact that it takes two different values. If the curve~$\Gamma$ had singular points, a situation could a priori occur where $\CV$ is constant on some (but not all) of the arcs into which these singular points divide the curve~$\Gamma$. Note, however, that the non-constancy of the function~$\CV$ on any arbitrarily small segment of~$\Gamma$ can also be proved without using the smoothness properties of the curve~$\Gamma$ and the analyticity of the function~$\CV$ by means of the following direct, though somewhat cumbersome, computation. (Thus, one obtains a proof of Theorem~\ref{thm_volume_precise} that does not use the results of Section~\ref{section_config_exotic}.) The author thanks the anonymous referee for pointing out this possibility.

 Differentiating equality~\eqref{eq_areas}, we obtain
 $$
 A_j'(y)=\frac{r_j(p_1+p_2)-y-1}{(y+1)\sqrt{F_j(y)}},
 $$
 where
 $$
 F_j(y)=-y^2+2r_j^2p_1p_2y-r_j^2(p_1^2+p_2^2)+1.
 $$
 To prove that the function~$\CV$ is non-constant on any arbitrarily small segment of~$\Gamma$, we suffice to show that the expression
 $$
 \bigl(A_1'(y)\bigr)^2-\bigl(A_2'(y)\bigr)^2=\bigl(A_1'(y)- A_2'(y)\bigr)\bigl(A_1'(y)+A_2'(y)\bigr)
 $$
 does not vanish identically on any subinterval of the segment~$[y_{\min},y_{\max}]$, i.\,e., that the polynomial
 $$
 Q(y)=\bigl(r_1(p_1+p_2)-y-1\bigr)^2F_2(y)-\bigl(r_2(p_1+p_2)-y-1\bigr)^2F_1(y)
 $$
 is not identically zero. The coefficient of~$y^3$ and the constant term of this polynomial are given by
 \begin{align*}
  c_3&=2(r_2-r_1)\bigl(p_1p_2(r_1+r_2)-p_1-p_2\bigr),\\
  c_0&=2(r_2-r_1)\bigl(r_1r_2(p_1+p_2)(p_1^2+p_2^2)-p_1p_2(r_1+r_2)+p_1+p_2\bigr),
 \end{align*}
 respectively. We have
 $$
 c_3+c_0=2(r_2-r_1)r_1r_2(p_1+p_2)(p_1^2+p_2^2)\ne 0.
 $$
 Therefore, the polynomial~$Q(y)$ is not identically zero.
\end{remark}

\section{Conclusion}\label{section_conclusion}

\subsection{On three kinds of exotic flexible octahedra} As we have already mentioned in Remark~\ref{remark_kind}, the kind of an exotic flexible octahedron of the form~\eqref{eq_flex} depends on the choice of a distinguished face. More precisely, we have the following result.

\begin{propos}\label{propos_kinds}
 Suppose that $|p_1|<|p_2|$ and $|q_1|<|q_2|$. Then the flexible octahedron~\eqref{eq_flex} has the first kind with respect to each of the faces~$\ba_1\ba_2\ba_3$ and\/~$\ba_1\ba_2\bb_3$, the second kind with respect to each of the faces~$\ba_1\bb_2\ba_3$, $\ba_1\bb_2\bb_3$, $\bb_1\ba_2\ba_3$, and\/~$\bb_1\ba_2\bb_3$, and the third kind with respect to each of the faces~$\bb_1\bb_2\ba_3$ and\/~$\bb_1\bb_2\bb_3$.
\end{propos}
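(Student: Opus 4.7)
The plan is, for each face $F$ of the octahedron~\eqref{eq_flex}, to determine the kind by reading off the ``flat-profile'' of the tangent $t_3$ at the edge of $F$ opposite the axis-$3$ vertex (the distinguished degree-$4$ tangent of condition~(4)), and then matching this profile with one of the parametrizations \eqref{eq_1kind}--\eqref{eq_3kind}.

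First I would compile a flat census of the twelve edges, calling an edge $\bu\bv$ \emph{flat at a configuration} if the two faces adjacent to $\bu\bv$ lie in a common great $2$-sphere of $\bS^3$, equivalently if the affine spans of these two faces coincide as $3$-dimensional subspaces of~$\E^4$ (equivalently $t_{\bu\bv}\in\{0,\infty\}$). Using \eqref{eq_flex}, this condition reduces to a rank check that fails for all $\theta\in(\theta_{\min},\theta_{\max})$ and holds only at the two critical values: at $\theta=\theta_{\max}$ the vertex $\bb_1$ lies on the great circle $P=\{x_2=x_4=0\}$ already containing $\ba_3,\bb_3$, and a direct computation (using $\bb_3\in\mathrm{span}(\bb_1,\ba_3)\subseteq P$) shows that every edge issuing from $\bb_1$ then has coplanar adjacent faces; analogously at $\theta=\theta_{\min}$ the same holds for every edge from $\bb_2$. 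The result is a partition of the twelve edges into five always-bounded edges $\ba_1\ba_2,\ba_1\ba_3,\ba_1\bb_3,\ba_2\ba_3,\ba_2\bb_3$; three edges flat only at $\theta_{\max}$, namely $\bb_1\ba_2,\bb_1\ba_3,\bb_1\bb_3$; three flat only at $\theta_{\min}$, namely $\ba_1\bb_2,\bb_2\ba_3,\bb_2\bb_3$; and the single doubly-flat edge~$\bb_1\bb_2$.

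Second, I would identify, for each face~$F$, the distinguished edge~$e_3(F)$ carrying the degree-$4$ tangent. The computation of face angles already performed in the proof of Theorem~\ref{thm_alg_geo} shows that in each face the link of a vertex in axis~$1$ is an antideltoid, the link of a vertex in axis~$2$ is a deltoid (each being neither an isogram nor an antiisogram), and the link of the axis-$3$ vertex falls into Bricard's case~$3$; the two (anti)deltoid links then force the degree-$4$ tangent to sit at the edge of $F$ that is opposite the axis-$3$ vertex. Combining with the census above, $e_3(F)=\ba_1\ba_2$ and is always bounded for $F\in\{\ba_1\ba_2\ba_3,\ba_1\ba_2\bb_3\}$; $e_3(F)\in\{\ba_1\bb_2,\bb_1\ba_2\}$ and is flat at exactly one extreme for the four faces $\ba_1\bb_2\ba_3,\ba_1\bb_2\bb_3,\bb_1\ba_2\ba_3,\bb_1\ba_2\bb_3$; and $e_3(F)=\bb_1\bb_2$ and is flat at both extremes for $F\in\{\bb_1\bb_2\ba_3,\bb_1\bb_2\bb_3\}$.

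Finally, I would translate the three flat-profiles into the three kinds by reading off the behavior of $t_3$ in \eqref{eq_1kind}--\eqref{eq_3kind}: in kind~$1$ one has $t_3=\lambda_3(\dn u+k'/\dn u)$, which is strictly positive and bounded on real~$u$; in kind~$2$ one has $t_3=\lambda_3(\dn u-k'/\dn u)$, which vanishes exactly when $\dn u=\sqrt{k'}$ but never attains~$\infty$; and in kind~$3$ one has $t_3=\lambda_3(\cn u/\sn u-k'\sn u/\cn u)$, which attains both $0$ and~$\infty$. These three behaviors are invariant under the allowed transformations $t\mapsto\pm t^{\pm 1}$ from Remark~\ref{remark_t} (which merely permute $\{0,\infty\}$), so the flat-profile of $e_3(F)$ uniquely determines the kind. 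The main technical hurdle is the last step: one must verify that the qualitative flat-profile of the degree-$4$ edge is a complete kind invariant. This can be done either by tracking the zero-and-pole divisor of $t_3$ on the complex elliptic curve $\Gamma$ in each of the three parametrizations and observing that the three divisor classes remain distinct under the substitutions of Remark~\ref{remark_t}, or, more concretely, by counting the total number of real points of $\Gamma_\pm$ at which $t_3=0$ or $t_3=\infty$ in each parametrization and matching the counts $0,2,4$ (for kinds $1,2,3$ respectively) with the geometric counts ($0$ for a bounded $e_3$, $2$ for a singly-flat $e_3$ visited twice during a full loop, and $4$ for the doubly-flat edge $\bb_1\bb_2$).
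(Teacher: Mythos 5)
Your proposal is correct, but it inverts the paper's logic: you read the kind off the single degree-$4$ tangent $t_3=t_{\bu_1\bu_2}$ at the edge of $F$ opposite the axis-$3$ vertex, whereas the paper reads it off the two degree-$2$ tangents $t_1=t_{\bu_2\bu_3}$ and $t_2=t_{\bu_1\bu_3}$ at the edges through that vertex. The paper's route is the shorter one: by \cite[Theorem~8.1]{Gai14c} each of $t_1,t_2$ is, up to $t\mapsto\pm t^{\pm1}$, either of the form $\mu\dn(u-\sigma)$ or of the form $\mu\cn(u-\sigma)/\sn(u-\sigma)$, and these two types are separated by a single yes/no question (does the tangent meet $\{0,\infty\}$ on the real flexion, i.e.\ does the tetrahedron $\ba_1\bb_1\bu_2\bu_3$, resp.\ $\ba_2\bb_2\bu_1\bu_3$, degenerate?); the kind is then just the number of $\cn/\sn$-type tangents among $t_1,t_2$ plus one, and the degeneracy check shows this number equals the number of $\bb$'s among $\bu_1,\bu_2$. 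Your route instead needs the three formulas $\lambda_3(\dn u+k'/\dn u)$, $\lambda_3(\dn u-k'/\dn u)$, $\lambda_3(\cn u/\sn u-k'\sn u/\cn u)$ to be distinguished by the real $\{0,\infty\}$-profile of $t_3$ alone, which is exactly the ``technical hurdle'' you flag. It does go through — the counts of real points of $\Gamma_\pm$ with $t_3\in\{0,\infty\}$ are $0$, $2$, $4$ in the three kinds and are invariant under $t\mapsto\pm t^{\pm1}$, matching your flat census (never flat / flat at one extreme, visited twice per loop / flat at both extremes) — but making it rigorous requires either the divisor argument or a careful identification of one real period of $u$ with one loop around $\Gamma_\pm$, plus (if you use the cruder ``attains neither/one/both'' invariant) a check that a singly-flat edge yields the \emph{same} value in $\{0,\infty\}$ at both of its flat configurations (true here because the two configurations with $\theta=\theta_{\max}$ differ only in the position of $\bb_2$, which is not a vertex of the two faces adjacent to $\bb_1\ba_2$). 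So your argument buys a pleasant global picture (the flat census of all twelve edges) at the cost of an extra invariance verification that the paper's choice of $t_1,t_2$ avoids entirely; the underlying degeneracy computations at $\theta_{\min}$ and $\theta_{\max}$ are the same in both treatments.
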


\begin{proof}
 Consider a face $F=\bu_1\bu_2\bu_3$, where each $\bu_j$ is either~$\ba_j$ or~$\bb_j$. We would like to determine the kind of the flexible octahedron~\eqref{eq_flex} with respect to the face~$F$. To do this, we need to study the parameterizations for the tangents $t_1=t_{\bu_2\bu_3}$, $t_2=t_{\bu_3\bu_1}$, and $t_3=t_{\bu_1\bu_2}$ of the halves of the oriented dihedral angles in Jacobi's elliptic functions. From~\cite[Theorem~8.1]{Gai14c} (cf. Remark~\ref{remark_t}) it follows that, up to transformations $t_j\mapsto\pm t_j^{\pm 1}$ and swapping the indices~$1$ and~$2$, the tangents $t_1$, $t_2$, and~$t_3$ can be parametrized in one of the three forms~\eqref{eq_1kind}--\eqref{eq_3kind}. So for each of the indices $j=1$ and $j=2$, we have one of the following two possibilities:
 \begin{align}
  t_j &= \mu_j\dn^{\pm 1}(u-\sigma_j),\label{eq_t_param1}\\
  t_j &= \mu_j\left(\frac{\cn(u-\sigma_j)}{\sn(u-\sigma_j)}\right)^{\pm 1},\label{eq_t_param2}
 \end{align}
 where $\mu_j\ne 0$ and~$\sigma_j$ are real constants and the elliptic modulus satisfies $0<k<1$. The parameter of the flexion~$u$ runs over~$\R$.

 The octahedron with the distinguished face~$F$ has the first kind if we have parametrizations of the form~\eqref{eq_t_param1} for both $j=1$ and~$j=2$, the second kind if we have a parametrization of the form~\eqref{eq_t_param1} for one of the two indices and a parametrization of the form~\eqref{eq_t_param2} for the other, and the third kind if we have parametrizations of the form~\eqref{eq_t_param2} for both indices. To distinguish between the two possibilities~\eqref{eq_t_param1} and~\eqref{eq_t_param2}, we need the following properties of the functions $\dn u$ and~$\cn u /\sn u$ with the elliptic modulus $k\in (0,1)$, see~\cite[Section~13.18]{BaEr55}:
\begin{itemize}
 \item For $u\in\R$, the function $\dn u$ takes values on the segment~$[k',1]$, where $k'=\sqrt{1-k^2}$; in particular, it never takes the values~$0$ or~$\infty$.
 \item As $u$ runs over~$\R$, the function~$\cn u /\sn u$ takes all values in~$\R\cup\{\infty\}$.
\end{itemize}
So $t_1$ has a parametrization of the form~\eqref{eq_t_param2} if and only if the oriented dihedral angle $\varphi_{\bu_2\bu_3}$ becomes either zero or straight during the flexion and hence if and only if the tetrahedron~$\ba_1\bb_1\bu_2\bu_3$ becomes degenerate during the flexion. Similarly, $t_2$ has a parametrization of the form~\eqref{eq_t_param2} if and only if the tetrahedron~$\ba_2\bb_2\bu_1\bu_3$ becomes degenerate during the flexion.

Using formulae~\eqref{eq_flex} and the inequalities $|p_1|<|p_2|$ and $|q_1|<|q_2|$, it is easy to check that the tetrahedra~$\ba_1\bb_1\bb_2\ba_3$ and~$\ba_1\bb_1\bb_2\bb_3$ become degenerate when $\theta=\theta_{\min}=\arcsin|q_2|$, the tetrahedra~$\ba_2\bb_2\bb_1\ba_3$ and~$\ba_2\bb_2\bb_1\bb_3$ become degenerate when $\theta=\theta_{\max}=\arccos|p_2|$, and the tetrahedra~$\ba_1\bb_1\ba_2\ba_3$, $\ba_1\bb_1\ba_2\bb_3$, $\ba_2\bb_2\ba_1\ba_3$ and~$\ba_2\bb_2\ba_1\bb_3$ never become degenerate during the flexion. The proposition follows.
\end{proof}

\subsection{Higher-dimensional case} In~\cite{Gai14c} the author has also constructed exotic flexible cross-polytopes in~$\bS^n$, where $n\ge 4$. The tangents~$t_1,\ldots,t_n$ of the halves of the dihedral angles of these cross-polytopes at the $(n-2)$-dimensional faces contained in some $(n-1)$-dimensional face have parametrizations of the form
\begin{align*}
 t_j&=\lambda_j\dn u,&j\in I_1,\\
 t_j&=\lambda_j\dn \left(u-\frac{K}2\right),&j\in I_2,\\
 t_j&=\lambda_j\left(\dn u+\frac{k'}{\dn u}\right),&j\in I_3,
\end{align*}
where $\{1,\ldots,n\}=I_1\sqcup I_2\sqcup I_3$ is a partition into three non-empty subsets. (These formulae provide an analogue of exotic flexible octahedra of the first kind. Similarly, one can write formulae for the analogues of exotic flexible octahedra of the second and third kinds.)

In constrast to the three-dimensional case, in the higher-dimensional case no geometric construction of exotic flexible cross-polytopes is known. Note, however, that no geometric constructions are known for other types of flexible cross-polytopes in~$\E^n$, $\bS^n$, and $\Lambda^n$ either, except for some constructions by Walz (unpublished) and Stachel~\cite{Sta00} in dimension~$4$. The absence of a geometric description does not allow us to obtain an analogue of formula~\eqref{eq_volume} for the oriented volume of an exotic flexible cross-polytope. Nevertheless, the following conjecture seems quite plausible, although its proof may require cumbersome calculations with elliptic functions.

\begin{conj}
 Exotic flexible cross-polytopes provide counterexamples to the Modified Bellows Conjecture in\/~$\bS^n$ for all $n\ge 4$.
\end{conj}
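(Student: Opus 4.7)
The plan is to adapt the strategy of Section~\ref{section_volume}, replacing the explicit geometric decomposition of the octahedron (which has no known analogue for $n\ge 4$) by Schl\"afli's differential formula. In~$\bS^n$ this formula reads
$$(n-1)\,dV=\sum_{F}V_{n-2}(F)\,d\theta_F,$$
where $F$ ranges over the codimension-$2$ faces of the cross-polytope, $V_{n-2}(F)$ is the $(n-2)$-volume of~$F$, and $\theta_F$ is the oriented dihedral angle at~$F$. Each such $F$ is an $(n-2)$-simplex whose $\binom{n-1}{2}$ edge lengths are fixed throughout the flexion, so every coefficient $V_{n-2}(F)$ is a genuine constant. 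Hence $dV/du$ becomes a constant-coefficient linear combination of the derivatives $d\theta_F/du=2(dt_F/du)/(1+t_F^2)$, where $t_F=\tan(\theta_F/2)$.

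The next step is to substitute the parametrizations of the $t_F$ in Jacobi's elliptic functions (with modulus $k\in(0,1)$) coming from~\cite{Gai14c}. Those formulae directly give the tangents at the $n$ codimension-$2$ faces contained in a distinguished facet of the cross-polytope; the tangents at the remaining $n(2^{n-1}-1)$ codimension-$2$ faces are then rational functions of these, by the cross-polytope analogue of Proposition~\ref{propos_birat}. Consequently $dV/du$ extends to a meromorphic $1$-form on the underlying elliptic curve~$\Gamma$. Replacing a subset $S$ of the $2n$ vertices by their antipodes transforms each tangent $t_F$ by a map of the form $t_F\mapsto\pm t_F^{\pm 1}$ determined combinatorially by the incidences of~$S$ with~$F$ and with the two facets adjacent to~$F$; hence $dV_S/du$ is obtained from $dV/du$ by an explicit substitution of the same type and remains meromorphic on~$\Gamma$.

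To conclude, one must show that $dV_S/du$ is nonzero for every choice of~$S$ giving an essential flexion. The most direct route is to evaluate $dV_S/du$ at a carefully chosen value of the flex parameter $u$ (e.g.\ a half-period of the elliptic functions, where $\dn$, $\sn$, and $\cn$ assume controllable values) and check nonvanishing there; alternatively, one may compute the order of zero or pole of $dV_S/du$ at a point of~$\Gamma$ corresponding to a flat limit of the cross-polytope, where the parametrization degenerates in a controlled way. The latter strategy generalizes Remark~\ref{remark_direct}, in which, for $n=3$, the non-constancy of the volume on any subinterval was reduced to the nonvanishing of a single polynomial in~$y$.

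The main obstacle is the combinatorial explosion: there are $2^{2n}$ sign patterns~$S$, and one must rule out identical vanishing of a specific linear combination of elliptic expressions for each of them. A uniform treatment would view $S\mapsto dV_S/du$ as a linear map from $(\Z/2\Z)^{2n}$ to the space of meromorphic $1$-forms on~$\Gamma$ and compute its kernel, which one expects to be generated by the "trivial" patterns corresponding to isometries of~$\bS^n$ that preserve the flexion (most notably the global antipodal map $\bx\mapsto-\bx$). Carrying out this linear-algebra-over-elliptic-functions computation---or, equivalently, the direct case-by-case verification---is precisely the cumbersome calculation the author alludes to, and is the step where all the technical difficulty will be concentrated.
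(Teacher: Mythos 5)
The statement you are proving is not proved in the paper at all: it appears as a \emph{Conjecture}, and the author explicitly explains why --- no geometric construction of exotic flexible cross-polytopes is known for $n\ge 4$, so no analogue of formula~\eqref{eq_volume} is available, and the author only remarks that a proof ``may require cumbersome calculations with elliptic functions.'' Your proposal is a sensible plan along exactly those lines (Schl\"afli's formula turns $dV/du$ into a constant-coefficient combination of the $d\theta_F/du$, which become meromorphic on the elliptic configuration curve after substituting the parametrizations from~\cite{Gai14c}), but it is a plan, not a proof. The decisive step --- showing that the resulting meromorphic $1$-form does not vanish identically for \emph{every} antipode pattern $S$ --- is precisely the open content of the conjecture, and you defer it entirely (``evaluate at a carefully chosen value of $u$\dots and check nonvanishing there''). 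Until that verification is actually carried out, nothing has been established; note that even the case $S=\varnothing$ (non-constancy of the volume of the original exotic cross-polytope) is not known for $n\ge 4$.

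Two further points need repair even at the level of the plan. First, the assignment $S\mapsto dV_S/du$ is \emph{not} a linear map from $(\Z/2\Z)^{2n}$ to the space of meromorphic $1$-forms: the substitutions $t_F\mapsto\pm t_F^{\pm1}$ compose, but their effect on $dV$ is not additive in $S$, so ``computing the kernel'' of a linear map is not a meaningful reduction. Moreover, the logic of what must be shown is inverted: for the Modified Bellows Conjecture to fail one needs $dV_S/du\not\equiv 0$ for \emph{all} $2^{2n}$ patterns $S$; a ``trivial'' pattern such as the global antipodal map $\bx\mapsto-\bx$ gives $dV_S=\pm dV_\varnothing$ (a congruent polytope), not $dV_S=0$, so it does not belong to any ``kernel'' and does not explain away any cases. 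Second, when $F$ itself contains a replaced vertex, the coefficient $V_{n-2}(F)$ in Schl\"afli's formula changes (the edge lengths of $F$ pass to supplements), not only the angle variable $t_F$; your claim that $dV_S/du$ is obtained from $dV/du$ ``by an explicit substitution of the same type'' must be amended to account for the new constants. Compare this with how the paper handles $n=3$: there Proposition~\ref{propos_antipode} shows that every antipode-replacement of the exotic octahedron is again (up to rotation and renaming) an octahedron of the same explicit form~\eqref{eq_flex}, so a \emph{single} volume computation (Proposition~\ref{propos_volume} and Corollary~\ref{cor_nonconst}) covers all $2^6$ cases at once. Finding the analogous structural statement for cross-polytopes --- or genuinely performing the elliptic-function computation for all $S$ --- is what a proof of this conjecture would require.
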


Note also that from the analytic point of view the volume of a flexible polyhedron in even-dimensional Lobachevsky space behaves similarly to the volume of a flexible polyhedron in sphere, see~\cite{Gai15a}. Therefore, the incorrectness of the Modified Bellows Conjecture in~$\bS^3$ should be considered as an argument in favor of the fact that the Bellows Conjecture in even-dimensional Lobachevsky spaces may also be false.

\subsection{Real-valued branch of the volume}
Formula~\eqref{eq_volume} determines a single-valued real-valued branch of the oriented volume function for a flexible octahedron~\eqref{eq_flex}, that is, a lifting $\widetilde{\CV}\colon \Gamma\to\R$ of the function $\CV\colon\Gamma\to\R/2\pi^2\Z$. This means that the increment of the oriented volume along each of the connected components~$\Gamma_{\pm}$ is zero. The author does not know the answer to the following question.

\begin{question}
Does there exist a flexible polyhedron in the sphere~$\bS^3$ (or in the sphere~$\bS^n$ with $n>3$) such that the increment of its oriented volume along some closed curve in the corresponding configuration space is a nonzero multiple of the volume of the entire sphere, equivalently, such that its oriented volume function does not have a single-valued real-valued branch?
\end{question}

\medskip

\textbf{Acknowledgements.} The author is grateful to the anonymous referee for useful remarks.

\end{document}